\definecolor{NTNUblue}{RGB}{0,80,158}
\definecolor{NTNUbluesupport}{RGB}{62,98,138}
\definecolor{NTNUorange}{RGB}{239,129,20}
\newcommand{\F}{\mathbb{F}}
\newcommand{\R}{\mathbb{R}}
\newcommand{\Z}{\mathbb{Z}}
\newcommand{\BB}{\mathbf{B}} 
\newcommand{\SB}{\mathbf{S}}
\newcommand{\pp}{\mathfrak{p}} 
\newcommand{\frp}{\mathfrak{p}}
\newcommand{\Rh}{\mathcal{R}}
\newcommand{\Hom}{\mathrm{Hom}}
\newcommand{\Inv}{\mathrm{Inv}}
\newcommand{\bbb}{\bullet}
\newcommand{\Cb}{C^\bbb}
\newcommand{\Hb}{H^\bbb}
\newcommand{\Bb}{B^\bbb}
\newcommand{\Db}{D^\bbb}
\newcommand{\Spec}{\mathrm{Spec}}
\numberwithin{equation}{subsection}
\theoremstyle{plain}
\newtheorem{thm}[subsection]{Theorem}
\newtheorem{prop}[subsection]{Proposition}
\newtheorem{lemma}[subsection]{Lemma}
\newtheorem{cor}[subsection]{Corollary}
\theoremstyle{definition}
\newtheorem{defn}[subsection]{Definition}
\newtheorem{example}[subsection]{Example}
\newtheorem{notn}[subsection]{Notation}
\theoremstyle{remark}
\newtheorem{rem}[subsection]{Remark}
\newtheorem{remark}[subsection]{Remark}
\newtheorem{rems}[subsection]{Remarks}
\begin{document}

\title{Quasi-boolean groups}

\author{Ambrus P\'al}
\address{Department of Mathematics, 180 Queen's Gate, Imperial College, London, SW7 2AZ, United Kingdom}
\email{a.pal@imperial.ac.uk}
\author{Gereon Quick}
\address{Department of Mathematical Sciences, NTNU, NO-7491 Trondheim, Norway}
\email{gereon.quick@ntnu.no}
\thanks{Both authors were partially supported by RCN Project No.\,313472 {\it Equations in Motivic Homotopy}, 
and the project \emph{Pure Mathematics in Norway} funded by the Trond Mohn Foundation.}

\date{}

\begin{abstract} 
We give several equivalent characterisations of the maximal pro-$2$ quotients of real projective groups. 
In particular, for pro-$2$ real projective groups we provide a presentation in terms of generators and relations, and a purely cohomological characterisation. 
As a consequence we explicitly reconstruct such groups from their mod $2$ cohomology rings. 
\end{abstract}
\subjclass{20E18, 12F10, 20J06.}
\maketitle

\section{Introduction}

Let $G$ be a profinite group. An {\it embedding problem for $G$} is a solid diagram:
\begin{align*}
\xymatrix{
 & G \ar[d]^-{\phi}\ar@{.>}[ld]_-{\widetilde{\phi}}
\\
B\ar[r]_-{\alpha} & A}
\end{align*}
where $A$ and $B$ are finite groups, the solid arrows are continuous homomorphisms and $\alpha$ is surjective. 
A {\it solution} of an embedding problem is a continuous homomorphism $\widetilde{\phi} \colon G\to B$ which makes the diagram commutative.  
We say that the embedding problem above is {\it real} if for every involution $t\in G$ with $\phi(t)\neq 1$ there is an involution $b \in B$ with $\alpha(b)=\phi(t)$, 
i.e., if involutions do not provide an obstruction for the existence of a solution. 

\begin{defn}\label{def:real_projective_group}
Following Haran and Jarden \cite[page 450]{HJ} we say that a profinite group $G$ is {\it real projective} 
if the subset $\Inv(G)$ of involutions is closed in $G$ 
and if every real embedding problem for $G$ has a solution. 
\end{defn}

\begin{remark}\label{rem:equiv_of_defns}
By \cite[Remark 7.6 and Proposition 7.7 on page 473]{HJ}, 
the condition that $\Inv(G)$ is closed in $G$ is equivalent to the condition that $G$ 
has an open subgroup without $2$-torsion which is used in \cite[Definition 1.1]{PQ}. 
To see that these conditions are equivlaent, let $g \in G$ be an element which belongs to the closure of $\Inv(G)$ satisfies $g^2=1$. 
Then $g \in \Inv(G)$ or $g=1$. 
Thus, $\Inv(G)$ is closed in $G$ if and only if $G$ has an open 
subgroup $U$ 
such that $U \cap \Inv(G) = \emptyset$. 
\end{remark}


By the work of Haran--Jarden \cite{HJ}, real projective groups play an important role in Galois theory as they are the absolute Galois groups of pseudo real closed fields.     
In fact, it follows from the work of Haran \cite{Ha} and Haran--Jarden \cite{HJ} 
that the absolute Galois groups of fields with virtual cohomological dimension at most one, which is a slightly larger class of fields, 
are real projective (see \cite[Corollary 2.4]{PQ}).  
For a real projective group $G$, we show in \cite[Theorem 1.3]{PQ} that the differential graded algebra $\Cb(G,\F_2)$ of continuous cochains is formal, 
i.e., $\Cb(G,\F_2)$ is quasi-isomorphic as differential graded algebras to its cohomology algebra. 
Roughly speaking, this means that the cohomology algebra of a real projective group already contains all the information of the differential graded algebra $\Cb(G,\F_2)$. 
The purpose of the present paper is to show that, in fact, the maximal pro-$2$ quotient of a real projective group can be reconstructed entirely from the mod $2$ cohomology ring. 
In particular, we show that pro-$2$ real projective groups can be reconstructed entirely from their mod $2$ cohomology ring. 
In order to further describe our results we recall the following terminology from \cite{PQ}. 

\begin{defn}\label{def:boolean_dual_sum_intro} 
We call an $\F_2$-algebra $\Bb = \bigoplus_{i\ge 0} B^i$ a {\it graded Boolean algebra} if $B^0=\mathbb F_2$ and 
there is a Boolean ring $B$ (see Section \ref{sec:Boolean_rings}) 
such that, for every $i \geq 1$, we have $B^i=B$ and multiplication in $\Bb$ is induced by $B$. 
We call an $\F_2$-algebra $\Db = \bigoplus_{i\ge 0} D^i$ a {\it dual algebra} if $D^0=\F_2$, and $D^i=0$ for $i \geq 2$. 
The {\it connected sum} $\Db \sqcap \Bb$ is the graded $\F_2$-algebra with $(\Db \sqcap \Bb)^0=\F_2$, $(\Db \sqcap \Bb)^i= D^i \oplus B^i$ for $i\geq 1$ and multiplication $D^1B^i$ and $B^iD^1$ is set to be zero for all $i \geq 1$. 
\end{defn}

In \cite{PQ}, we deduce from Scheiderer's work in \cite{Schbook} that the mod $2$ cohomology algebra of a real projective group is a connected sum of a dual and a Boolean graded algebra. 
Hence, in the terminology of Definition \ref{def:Boolean_etc_intro} below, the maximal pro-$2$ quotient of a real projective group is a cohomologically quasi-Boolean group. 
The anonymous referee of \cite{PQ} suggested that the latter property may even characterise pro-$2$ real projective groups completely. 
The purpose of the present paper is to prove this conjecture. 
In fact, we prove the stronger fact that every pro-$2$ real projective group has an explicit description as a certain free pro-$2$ product with explicit generators and relations provided by the cohomology ring (see Theorem \ref{reconstruct} below).  
This result significantly strengthens a consequence of the main result of \cite{PQ}. 
According to the latter, for a real projective group $G$, 
the differential graded $\F_2$-algebra $\Cb(G,\F_2)$  and its cohomology $\Hb(G,\F_2)$ is Koszul. 
Hence, by a theorem of Positselski, the $\F_2$-linear co-algebra of the maximal pro-$2$ quotient $H$ 
of $G$ can be reconstructed explicitly from the cohomology ring $\Hb(H,\F_2)\cong \Hb(G,\F_2)$ using the bar construction (see Example 6.3 of \cite{Po}). 
Here we reconstruct the group itself via an even more transparent recipe. 
\\

We now describe our main results in more detail. 
To do so we need the following constructions and terminology. 
First we recall free products. 

\begin{defn} 
Let $G_1$ and $G_2$ be two pro-$p$ groups. 
Let $G_1*G_2$ be the discrete free product of $G_1$ and $G_2$ and let $\mathcal N$ be the family of normal subgroups $N$ of $G$ such that $(G_1*G_2)/N$ is a finite $p$-group and $N \cap G_1, N\cap G_2$ are open subgroups of $G_1, G_2$ respectively. 
Then
\[
G_1 *_pG_2 \coloneqq \varprojlim_{N\in\mathcal N}(G_1*G_2)/N
\]
is called the {\it free pro-$p$ product} of $G_1$ and $G_2$. 
\end{defn}

We note that the free pro-$p$ product of $G_1$ and $G_2$ satisfies the usual universal property 
with respect to continuous homomorphisms from $G_1$ and $G_2$ to pro-$p$ groups 
(see e.g.~\cite[Proposition 3.1.1]{HJbook}).  
Next we are going to define free products over topological spaces where we refer to 
\cite{GR}, \cite{H1}, \cite{H2}, \cite{Ha}, \cite{HJ2}, \cite[Chapter 4]{HJbook}, \cite{Melnikov}, \cite[Chapter 5]{Ribesbook} 
for properties and variations of free products of profinite groups. 

\begin{defn}\label{def:B_of_X}
For every topological space $X$, 
let $\bigast_X\mathbb Z/2\mathbb Z$ denote the group which is freely generated by the elements of $X$, 
subject to the relation that these elements are involutions, 
and let $\mathcal N$ be the family of normal subgroups $N$ of $\bigast_X\mathbb Z/2\mathbb Z$ 
such that $(\bigast_X\Z/2\Z)/N$ is a finite $2$-group and the composition of the natural inclusion 
$\iota \colon X\to \bigast_X\Z/2\Z$ and the quotient homomorphism
$\bigast_X\mathbb Z/2\Z \to (\bigast_X\Z/2\Z)/N$ is continuous 
with respect to the discrete topology on $(\bigast_X\Z/2\Z)/N$. 
Then we set 
\[
\mathbb B(X) \coloneqq \varprojlim_{N\in\mathcal N}(\bigast_X\Z/2\Z)/N
\]
and refer to $\mathbb B(X)$ as the {\it free pro-$2$ product of order two groups over $X$}. 
\end{defn}


Finally, we are going to use the following terminology for pro-$2$ groups, 
where we refer to \cite[1.5 on pages 7--8]{Se} for the notion of a free pro-$2$ group. 

\begin{defn}\label{def:Boolean_etc_intro}
We say that a pro-$2$ group is a {\it Boolean group} if it is isomorphic to $\mathbb B(X)$ for some topological space $X$. 
We say that a pro-$2$ group is a {\it quasi-Boolean group} if it is the free product of a free pro-$2$ group 
and a Boolean group. 
We say that a pro-$2$ group is a {\it cohomologically Boolean group} if its mod $2$ cohomology is a graded Boolean algebra. 
We say that a pro-$2$ group is a {\it cohomologically quasi-Boolean group} if its mod $2$ cohomology is the connected sum of a dual algebra and a graded Boolean algebra.
\end{defn}

For a Boolean group $\mathbb B(X)$, we will see in Proposition \ref{replace} that we may assume without the loss of generality that $X$ is profinite. 
We note that a Boolean group is the free product over 
a constant sheaf of a group of order $2$ and a profinite space $X$ in the sense of \cite[Chapter 4]{HJbook},  
and quasi-Boolean groups are the real free groups in the category of pro-$2$ groups in the sense of \cite{H2} and \cite{HJ2}.
We can now state our main results. 

\begin{thm}\label{bigbigbig} Let $G$ be a pro-$2$ group. Then the following are equivalent:
\begin{enumerate}
\item[$(i)$] $G$ is quasi-Boolean. 
\item[$(ii)$] $G$ is real projective. 
\item[$(iii)$] $G$ is the maximal pro-$2$ quotient of a real projective profinite group. 
\item[$(iv)$] $G$ is cohomologically quasi-Boolean. 
\end{enumerate}
\end{thm}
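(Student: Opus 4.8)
The plan is to establish the cycle $(i)\Rightarrow(ii)\Rightarrow(iii)\Rightarrow(iv)\Rightarrow(i)$, so that all the substantive work is concentrated in the reconstruction $(iv)\Rightarrow(i)$ while the other three steps are either formal or quotations of results already available.

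For $(i)\Rightarrow(ii)$ I would treat the two factors of a quasi-Boolean group separately. A free pro-$2$ group is torsion-free of cohomological dimension one, hence projective and, having no involutions, vacuously real projective; a Boolean group $\mathbb B(X)$ with $X$ profinite is a free product of copies of $\Z/2\Z$ over a profinite space and is real projective by the work of Haran and Jarden, in line with the identification (recorded before the theorem) of quasi-Boolean groups with the real free pro-$2$ groups of \cite{HJ}. It then remains to invoke permanence of real projectivity under the free pro-$2$ product; here the involution condition causes no trouble because the free factor $F$ contributes no involutions and every involution of $F*_2\mathbb B(X)$ is conjugate into the Boolean factor. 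The step $(ii)\Rightarrow(iii)$ is immediate: a real projective pro-$2$ group is a real projective profinite group that coincides with its own maximal pro-$2$ quotient. For $(iii)\Rightarrow(iv)$ I would quote \cite{PQ} together with Scheiderer's computation in \cite{Schbook}, which give that the mod $2$ cohomology of a real projective group is a connected sum $\Db\sqcap\Bb$ of a dual and a graded Boolean algebra; since the prime-to-$2$ part of such a group contributes trivially to mod $2$ cohomology, the Lyndon--Hochschild--Serre spectral sequence shows that inflation identifies $\Hb(G,\F_2)$ with the cohomology of the maximal pro-$2$ quotient, so the latter is cohomologically quasi-Boolean.

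The core of the argument is $(iv)\Rightarrow(i)$, which is Theorem \ref{reconstruct}. Writing $\Hb(G,\F_2)\cong\Db\sqcap\Bb$, I would first decode the two summands into group-theoretic data: the dual part $D^1$ is the $\F_2$-dual of an intended basis of free generators, while Stone duality identifies the underlying Boolean ring $B$ with the ring of continuous $\F_2$-valued functions on a profinite space $X\coloneqq\Spec(B)$. This data specifies a candidate quasi-Boolean group $\widetilde G\coloneqq F*_2\mathbb B(X)$, and the connected-sum formula for the cohomology of a free pro-$2$ product, combined with the known cohomology of free and of Boolean groups, yields $\Hb(\widetilde G,\F_2)\cong\Db\sqcap\Bb$. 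The task is then to exhibit an honest homomorphism realizing this identification and to upgrade it to an isomorphism. I would send the free generators to elements of $G$ dual to the chosen basis of $D^1\subseteq H^1(G,\F_2)$, and handle the Boolean factor by constructing a continuous map $X\to\Inv(G)$ into the closed space of involutions that is compatible with $B$; the universal properties of $F$, of $\mathbb B(X)$ and of the free product then assemble these into a map $\widetilde G\to G$. Finally I would apply the standard criterion that a continuous homomorphism of pro-$2$ groups inducing an isomorphism on $H^1$ and an injection on $H^2$ is an isomorphism.

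The main obstacle is precisely the Boolean half of this map: one must identify the space of involutions $\Inv(G)$ with the Stone space $\Spec(B)$ of the Boolean summand by a homeomorphism that induces the prescribed isomorphism on cohomology. This is where real projectivity must be used in its sharpest form, since it is the hypothesis that forces $\Inv(G)$ to be closed and to carry exactly the profinite structure dual to $B$; checking that the assembled map is an isomorphism on $H^1$ and injective on $H^2$ is the crux of the reconstruction.
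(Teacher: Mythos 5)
Your cycle $(i)\Rightarrow(ii)\Rightarrow(iii)\Rightarrow(iv)\Rightarrow(i)$ puts all the weight on a direct reconstruction for $(iv)\Rightarrow(i)$, and that step has a genuine gap. To build the map $\mathbb B(X)\to G$ with $X=\Spec(B)$ you must produce a continuous family of involutions in $G$ parametrised by $\Spec(B)$, one in each relevant conjugacy class. This needs two nontrivial inputs that your sketch does not supply: first, the identification of $B$ with $C(\mathcal X^*(G),\F_2)$, which is the local--global principle obtained from a profinite version of Quillen's $F$-isomorphism theorem (Corollary \ref{local-global}); second, a continuous section of the quotient $\mathcal Y^*(G)\to\mathcal X^*(G)$. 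The paper obtains such a section (Theorem \ref{have_sec}, Corollary \ref{have_sec2}) only for real projective groups, by realising them as absolute Galois groups of pseudo real closed fields and invoking the section theorem for profinite principal $G$-bundles; starting from the purely cohomological hypothesis $(iv)$ you have no access to this, so the Boolean half of your map cannot be constructed. The paper sidesteps exactly this circularity by proving $(iv)\Rightarrow(ii)$ instead: using the local--global principle it shows that every real $2$-embedding problem (enriched with ``lifting data'' on conjugacy classes of involutions) has a solution, via vanishing of obstruction classes for central kernels and an induction on a filtration of the kernel; quasi-Booleanness then follows from the already-established $(ii)\Rightarrow(i)$, which itself uses Haran--Jarden's splitting lemma and Serre's $H^1$-criterion. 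Note also that Theorem \ref{reconstruct} is not the implication $(iv)\Rightarrow(i)$: its hypothesis is that $G$ is already quasi-Boolean, so your plan inverts its logical role.

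Two further points. In $(iii)\Rightarrow(iv)$ your justification is wrong: the kernel of $H\to H_2$ is not a prime-to-$2$ group (it is the intersection of the open normal subgroups of $2$-power index and generally has nontrivial mod $2$ cohomology), so the Lyndon--Hochschild--Serre argument does not show that inflation $\Hb(H_2,\F_2)\to\Hb(H,\F_2)$ is an isomorphism; the paper deduces this from the Rost--Voevodsky norm residue theorem after realising $H$ as an absolute Galois group. In $(i)\Rightarrow(ii)$ your appeal to ``permanence of real projectivity under free pro-$2$ products'' is plausible but unproved; the paper instead exhibits $F(Y)*_2\mathbb B(X)$ directly as the maximal pro-$2$ quotient of an explicit real free profinite group and combines Haran--Jarden's characterisation (Theorem \ref{real-free}) with the equivalence of pro-$2$ real projectivity and being a maximal pro-$2$ quotient of a real projective group (Theorem \ref{foximaxi}).
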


We note that the equivalence of $(ii)$ and $(iii)$ follows from the arguments developed in \cite[Section 3]{HJ2}, 
and the equivalence of $(i)$ and $(ii)$ has already been proven in \cite[Corollary 3.3]{HJ2} and \cite[Proposition 4.2]{H2} 
(assuming that $X$ is a closed system of representatives of the conjugacy classes of involutions in $G$). 
We give an alternative proof of the equivalence of $(i)$ and $(ii)$ with no assumption on $X$ using results of \cite{HJ2} 
since this equivalence will follow from results we need to develop for the remaining equivalence with $(iv)$. 
Hence the most interesting feature of Theorem \ref{bigbigbig} 
beyond the results of \cite{H2} and \cite{HJ2} 
is that it incorporates a purely cohomological characterisation of these pro-$2$ groups. 
Our proof is a bit more involved than it might be anticipated.  
The proof uses results of Haran--Jarden in the arithmetic of fields, 
for example a group-theoretical characterisation of real projective groups, 
a theorem on the existence of sections of profinite principle $G$-bundles, 
and profinite versions of two theorems of Quillen on group cohomology. 
We will also show the following: 

\begin{cor}\label{justboole} Let $G$ be a pro-$2$ group. Then the following are equivalent:
\begin{enumerate}
\item[$(i)$] $G$ is Boolean. 
\item[$(ii)$] $G$ is cohomologically Boolean. 
\end{enumerate}
\end{cor}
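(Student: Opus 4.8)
The plan is to deduce this corollary from Theorem \ref{bigbigbig} by determining precisely when the ``degenerate'' summand in each characterisation vanishes. A Boolean group is exactly a quasi-Boolean group whose free pro-$2$ factor is trivial, and --- comparing Definitions \ref{def:boolean_dual_sum_intro} and \ref{def:Boolean_etc_intro} --- a graded Boolean algebra is exactly a connected sum $\Db \sqcap \Bb$ whose dual part is trivial, i.e.\ with $D^1 = 0$. So the real content is to match ``trivial free factor'' on the group side with ``trivial dual part'' on the cohomology side, and the bridge I would use is the cup-square map $s \colon H^1(G,\F_2) \to H^2(G,\F_2)$, $s(x) = x \cup x$, which is $\F_2$-linear because the coefficients are $\F_2$.

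For the implication $(i) \Rightarrow (ii)$, suppose $G \cong \mathbb{B}(X)$ is Boolean. Then $G$ is quasi-Boolean with trivial free factor, so by $(i) \Rightarrow (iv)$ of Theorem \ref{bigbigbig} its cohomology is a connected sum $\F_2 \sqcap \Bb$; but this is just the graded Boolean algebra $\Bb$, so $G$ is cohomologically Boolean. Equivalently, the explicit computation of $H^\bbb(\mathbb{B}(X),\F_2)$ entering the proof of Theorem \ref{bigbigbig} already exhibits it as a graded Boolean algebra.

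For the implication $(ii) \Rightarrow (i)$, suppose $G$ is cohomologically Boolean, so $H^\bbb(G,\F_2) \cong \Bb$ for some Boolean ring $B$. This is in particular a connected sum with trivial dual part, so $G$ is cohomologically quasi-Boolean, and $(iv) \Rightarrow (i)$ of Theorem \ref{bigbigbig} yields an isomorphism $G \cong F *_2 \mathbb{B}(X)$ with $F$ a free pro-$2$ group. The cohomology of this free product is a connected sum $\Db \sqcap \Bb'$ in which $\Db = H^\bbb(F,\F_2)$ is a dual algebra with $D^1 = H^1(F,\F_2)$ and $H^i(F,\F_2) = 0$ for $i \ge 2$. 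Computing the cup-square $s$ in this decomposition, for $x = d + b$ with $d \in D^1$ and $b \in B'$ the cross-terms $d \cup b$ and $b \cup d$ vanish by the definition of the connected sum and $d \cup d \in D^2 = 0$, so $s(x) = b \cup b = b$ by idempotency in the Boolean ring $B'$; hence $\Ker s = D^1 = H^1(F,\F_2)$. But $\Ker s$ depends only on the ring $H^\bbb(G,\F_2)$, and computed instead from the given purely Boolean description it is $0$, since there $s(b) = b \cup b = b$ is injective. Therefore $H^1(F,\F_2) = 0$, and a free pro-$2$ group with vanishing first cohomology is trivial, so $F = 1$ and $G \cong \mathbb{B}(X)$ is Boolean.

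The one delicate point --- and the step I would flag as the main obstacle --- is that the quasi-Boolean, equivalently connected-sum, decomposition is not \emph{a priori} unique, so the free factor cannot simply be read off from the cohomology. The cup-square operation circumvents this by furnishing a canonical, presentation-independent description of the dual part as $\Ker s$, which is then forced to vanish once $H^\bbb(G,\F_2)$ is known to be purely Boolean; this is exactly the extra input that upgrades the quasi-Boolean statement of Theorem \ref{bigbigbig} to the Boolean statement of the corollary.
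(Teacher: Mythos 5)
Your guiding idea --- that the dual part $D^1$ is canonically the kernel of the cup-square map $s\colon H^1(G,\F_2)\to H^2(G,\F_2)$, so that ``Boolean'' versus ``quasi-Boolean'' is detected by whether degree-one classes can square to zero --- is exactly the mechanism the paper uses. But both directions, as you have written them, lean on claims that Theorem \ref{bigbigbig} does not provide. For $(i)\Rightarrow(ii)$ you assert that the theorem gives $\Hb(\mathbb B(X),\F_2)$ as a connected sum $\F_2\sqcap\Bb$ with trivial dual part; it only gives a connected sum $\Db\sqcap\Bb$ with $\Db$ unspecified, and there is no ``explicit computation of $\Hb(\mathbb B(X),\F_2)$'' in the proof of Theorem \ref{bigbigbig} to fall back on (that implication is routed through \cite{PQ} and the norm residue theorem). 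The missing step is precisely an argument that $\Ker s=D^1$ vanishes for $G=\mathbb B(X)$: if $0\neq c\in D^1\subset\Hom(G,\Z/2\Z)$ then $c\cup c=0$, so the restriction of $c$ to the subgroup generated by any involution is zero (its square vanishes in the polynomial ring $\Hb(\Z/2\Z,\F_2)$), hence $c$ vanishes on $i_X(X)$; since $i_X(X)$ topologically generates $\mathbb B(X)$ by the universal property, $c=0$, a contradiction. This is the content of the paper's first paragraph and it cannot be skipped.

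For $(ii)\Rightarrow(i)$ your argument is sound in outline but rests on the unproved assertion that $\Hb(F*_2\mathbb B(X),\F_2)$ is, \emph{as a ring}, the connected sum of $\Hb(F,\F_2)$ and $\Hb(\mathbb B(X),\F_2)$ with vanishing cross-products. That is a true and standard fact about free pro-$p$ products, but it is nowhere established in the paper (only the degree-one additive statement $(G_1*_2G_2)_*\cong (G_1)_*\oplus(G_2)_*$ is used), so you are importing a nontrivial external computation. The paper sidesteps it entirely: if $F\neq 1$, pick $0\neq c\in\Hom(F,\Z/2\Z)$ and pull it back along the retraction $\pi_1\colon G\to F$ to get $\bar c\neq 0$; since $F$ is free pro-$2$ it has cohomological dimension at most one, so $c\cup c=0$ and hence $\bar c\cup\bar c=\pi_1^\bullet(c\cup c)=0$; but in a graded Boolean algebra the square of a nonzero degree-one element is that element, so $\bar c=0$, a contradiction. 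I would adopt that version: it uses only the surjection onto the free factor and the hypothesis on $\Hb(G,\F_2)$, and avoids any claim about the multiplicative structure of the free product's cohomology.
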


Using our results we then demonstrate that quasi-Boolean groups can, in fact, 
be reconstructed from their mod $2$ cohomology. 
For every set $Y$, let $F(Y)$ denote the free pro-$2$ group as defined in \cite[Section 1.5 on pages 7--8]{Se}. 

\begin{thm}\label{reconstruct} 
Let $G$ be a quasi-Boolean pro-$2$ group 
such that $\Hb(G,\F_2)$ is the connected sum of a dual algebra $\Db$ 
and a graded Boolean algebra $\Bb$ associated to the Boolean ring $B$. 
Let $Y$ be a basis of $D^1$ and let $X$ be the spectrum of $B$. 
Then $G$ is isomorphic to $F(Y)*_2\mathbb B(X)$. 
\end{thm}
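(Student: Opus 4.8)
The plan is to compare two connected-sum decompositions of $\Hb(G,\F_2)$ --- the one furnished by the hypothesis and one coming from an \emph{a priori} free-product description of $G$ --- and to argue that both the dual summand and the Boolean ring are canonically determined by the graded ring $\Hb(G,\F_2)$ alone. Since $G$ is quasi-Boolean, Definition \ref{def:Boolean_etc_intro} lets us write $G \cong F(Y_0)*_2\mathbb{B}(X_0)$ for some set $Y_0$ and, by Proposition \ref{replace}, some profinite space $X_0$. The first step is to read off $\Hb(G,\F_2)$ from this presentation: using the Mayer--Vietoris (coproduct) formula for the cohomology of a free pro-$2$ product, together with the fact that $\Hb(F(Y_0),\F_2)$ is a dual algebra whose degree-one part has $\F_2$-dimension $|Y_0|$, and that $\Hb(\mathbb{B}(X_0),\F_2)$ is the graded Boolean algebra associated to the Boolean ring $B_0$ of clopen subsets of $X_0$ with $\Spec B_0\cong X_0$ by Stone duality (as developed for Corollary \ref{justboole}), I obtain a decomposition $\Hb(G,\F_2)\cong D_0^\bbb\sqcap B_0^\bbb$. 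The vanishing of the cross terms $D_0^1\cdot B_0^i$ in the free product matches the connected-sum relations exactly.

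The key algebraic step is to recover both summands intrinsically from the ring structure. Consider the squaring (Frobenius) map $\mathrm{sq}\colon H^1\to H^2$, $x\mapsto x\cup x$, which is $\F_2$-linear since we work in characteristic two. In any connected sum $\Db\sqcap\Bb$ one has $H^2=B$ because $D^2=0$, and for $x=d+b$ with $d\in D^1$, $b\in B^1$ the relations $D^1B^1=B^1D^1=0$ together with the Boolean identity $b^2=b$ give $\mathrm{sq}(x)=b$. Hence $\Ker(\mathrm{sq})=D^1$ and $\mathrm{sq}$ is surjective, identifying $B=H^2$; moreover the products $H^i\times H^j\to H^{i+j}$ for $i,j\ge 1$ recover the full Boolean ring structure on $B=H^2$. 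Consequently the subspace $D^1\subseteq H^1$ and the Boolean ring $B$ are invariants of the graded $\F_2$-algebra $\Hb(G,\F_2)$, independent of the chosen decomposition.

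Applying this to both decompositions yields $D^1=\Ker(\mathrm{sq})=D_0^1$ and $B\cong B_0$ as Boolean rings. Therefore $|Y|=\dim_{\F_2}D^1=\dim_{\F_2}D_0^1=|Y_0|$, so the free pro-$2$ groups $F(Y)$ and $F(Y_0)$, having equal rank, are isomorphic; and $X=\Spec B\cong\Spec B_0\cong X_0$, so $\mathbb{B}(X)\cong\mathbb{B}(X_0)$ by Stone duality. Since the free pro-$2$ product is functorial in each factor, $G\cong F(Y_0)*_2\mathbb{B}(X_0)\cong F(Y)*_2\mathbb{B}(X)$, as required.

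The main obstacle is the intrinsic characterisation of the two summands: everything hinges on showing that the image of the squaring map is exactly the Boolean ring $B$, which uses the idempotency $b^2=b$ of a Boolean ring in an essential way, and that its kernel is exactly $D^1$. A secondary point requiring care is the translation between group-theoretic and cohomological data --- namely that the rank of a free pro-$2$ group equals $\dim_{\F_2}H^1$, that the cohomology of $\mathbb{B}(X)$ realises the Boolean ring whose spectrum returns $X$, and that these invariants pin down the factors up to isomorphism.
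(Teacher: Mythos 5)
Your overall architecture is the same as the paper's: write $G\cong F(Y_0)*_2\mathbb B(X_0)$ using quasi-Booleanness, identify the cohomological invariants of the two free factors, and match them against $\Db$ and $\Bb$. Your squaring-map observation is correct and is essentially the paper's Lemma \ref{7.6} in degree one: $\Ker(\mathrm{sq})=D^1$ is the nilradical and $\Hb/\mathcal N(\Hb)\cong\Bb$, so the decomposition is indeed intrinsic to the graded ring. That part of the argument is fine.

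The gap is in the step where you claim that $\Hb(\mathbb B(X_0),\F_2)$ is the graded Boolean algebra associated to $\BB(X_0)$ with $\Spec\BB(X_0)\cong X_0$, ``as developed for Corollary \ref{justboole}.'' Corollary \ref{justboole} only shows that a Boolean group is cohomologically Boolean, i.e.\ that its cohomology is the graded Boolean algebra of \emph{some} Boolean ring; it does not identify that ring, and identifying it is precisely where the real work of the theorem lies. In the paper this identification is obtained by combining Theorem \ref{sistar} (the homeomorphism $X_0\cong\mathcal X^*(G)$, which rests on Scheiderer's theorem and the section result of Theorem \ref{section}) with the Quillen-type local--global principle of Corollary \ref{local-global} (equation \eqref{7.17.1}), which gives $\Bb\cong\Cb(\mathcal X^*(G),\Z/2\Z)$ and hence $\Spec B\cong\mathcal X^*(G)\cong X_0$ by Stone duality. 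Without invoking these results (or supplying an independent computation of the full cup-product structure of $\Hb(\mathbb B(X_0),\F_2)$, e.g.\ by a finite-then-limit argument), your chain $X=\Spec B\cong\Spec B_0\cong X_0$ is unsupported. A secondary, smaller issue: the ``Mayer--Vietoris (coproduct) formula'' for the multiplicative structure of $\Hb$ of a free pro-$2$ product is asserted but proved nowhere in the paper; the paper deliberately sidesteps it by using only the additive $H^1$-decomposition $(G_1*_2G_2)_*\cong(G_1)_*\oplus(G_2)_*$ together with the already-established identification of $\Bb$, which is all that is needed to count $Y$.
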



\subsection*{Content}
In Section \ref{sec:Boolean_rings} we give a modern exposition of the theory of Boolean rings, including Stone duality, using now standard tools from commutative algebra. 
We cover some background material on profinite spaces, including the profinite completion functor, in the Section \ref{sec:profinite_spaces}. 
In Section \ref{sec:prof_G_bundles} we prove that profinite principal $G$-bundles have sections, a result originally announced by Morel in \cite{Mo} without proof. 
We prove that the class of pro-$2$ real projective groups and the maximal pro-$2$ quotients of real projective groups are the same in Section \ref{sec:max_2_quotients}, 
the key step being a simple group-theoretical lemma. 
In Section \ref{sec:pro2_real_vs_quasi_Boolean} we show that the class of quasi-Boolean and pro-$2$ real projective groups are the same, 
heavily relying on the main results of the previous sections and several theorems of \cite{HJ2}. 
Then, in Section \ref{sec:Quillen_consequences}, we present profinite versions of some classical results of Quillen on the cohomology of groups, 
partially following the suggestion at the end of \cite{Sc}, 
and use these results to derive a local-global principle for the cohomology of cohomologically quasi-Boolean groups 
analogous to Scheiderer's theorem for real projective groups. 
In Section \ref{sec:main_thm_and_consequences} we prove the main results using cohomological obstruction theory for central embedding problems and the local-global principle of the previous section.

\subsection*{Acknowledgement} 
This paper was written as a response to one of the questions asked by the referee to our paper \cite{PQ}. 
We are very thankful for the anonymous referee's comments. 
We thank the anonymous referee of the present paper for many comments and suggestions that helped to improve the manuscript. 

%

\section{A primer on Boolean rings}\label{sec:Boolean_rings}

In this section we recall and prove the results we need on Boolean rings. 
In particular, we deduce Stone duality in Theorem \ref{duality}. 
We note that the content of this section is very similar to the content of \cite[Appendix A]{PQ}. 
We include it here for convenience of the reader. 

\begin{defn} 
A ring $R$ is called {\it Boolean} if $x^2=x$ for every $x\in R$.
\end{defn}

\begin{example}\label{ex1.2} 
The field with two elements $\mathbb F_2$ is a Boolean ring. 
In fact, since $x(1-x)=0$ for all $x$ in a Boolean ring, $\F_2$ is the only Boolean integral domain. 
The direct product of Boolean rings is Boolean, and so, for every set $X$, the direct product ring:
\[
\mathbb F_2^X=\prod_{i\in X}\mathbb F_2
\]
is a Boolean ring. 
Now let $X$ be a topological space, and let $\BB(X)$ denote the ring of functions $f \colon X\to\mathbb F_2$ which are continuous with respect to the discrete topology on $\mathbb F_2$. Since the subrings of Boolean rings are Boolean, and $\BB(X)$ is a subring of $\mathbb F_2^X$, we get that $\BB(X)$ is Boolean, too. 
\end{example}

\begin{prop}\label{basic} 
In a Boolean ring $R$ the following hold:
\begin{enumerate}
\item[$(i)$]  we have $2x=0$ for every $x\in R$; 
\item[$(ii)$]  every prime ideal $\mathfrak p$ is maximal, and $R/\mathfrak p$ is the field with two elements; 
\item[$(iii)$] we have $(x,y)=(x+y-xy)$ for every $x,y\in R$; 
\item[$(iv)$] every finitely generated ideal is principal.
\end{enumerate}
\end{prop}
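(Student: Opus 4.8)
The plan is to prove the four statements in order, letting each build on its predecessors, with the entire argument resting on the single identity obtained by expanding $(a+b)^2 = a+b$. For $(i)$, I would take arbitrary $a,b \in R$ and write
\[
a + b = (a+b)^2 = a^2 + ab + ba + b^2 = a + ab + ba + b,
\]
which forces $ab + ba = 0$; specialising to $a = b = x$ gives $x + x = 0$, i.e.\ $2x = 0$. I would also extract from the same computation the byproduct that $R$ is commutative: since $2ab = 0$, the relation $ab + ba = 0$ yields $ab = ba$. This commutativity will be used freely below, for instance to guarantee that quotients and ideals behave as expected.

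For $(ii)$, I would pass to the quotient by a prime ideal $\mathfrak p$. The ring $R/\mathfrak p$ is an integral domain, and as a quotient of a Boolean ring it is again Boolean. By Example \ref{ex1.2} the only Boolean integral domain is $\F_2$, so $R/\mathfrak p \cong \F_2$; since $\F_2$ is a field, $\mathfrak p$ is maximal. This settles both assertions simultaneously.

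The technical heart is $(iii)$. Setting $z = x + y - xy$ — where I note $-xy = xy$ by $(i)$, so $z = x + y + xy$ — the inclusion $(z) \subseteq (x,y)$ is immediate, and the real content is the reverse inclusion. For this I would compute
\[
xz = x(x + y + xy) = x^2 + xy + x^2 y = x + xy + xy = x + 2xy = x,
\]
using $x^2 = x$ together with $(i)$, and symmetrically $yz = y$. Hence $x, y \in (z)$, so $(x,y) \subseteq (z)$ and the two ideals coincide.

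Finally, $(iv)$ follows by induction on the number of generators of a finitely generated ideal $I = (x_1, \dots, x_n)$: the cases $n \le 1$ are trivial, and for $n \ge 2$ part $(iii)$ supplies an element $w$ with $(x_{n-1}, x_n) = (w)$, whence $I = (x_1, \dots, x_{n-2}, w)$ is generated by $n-1$ elements and the inductive hypothesis applies. The main obstacle, such as it is, lies entirely in $(iii)$: one must guess the correct single generator $z$ and verify the non-obvious cancellation $xz = x$. Everything else is a direct consequence of the defining identity $x^2 = x$ and the observations in $(i)$.
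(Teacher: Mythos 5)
Your proposal is correct and follows essentially the same route as the paper: the same verification $x(x+y-xy)=x$ is the heart of $(iii)$, $(ii)$ is the same quotient argument, and $(iv)$ is the same induction collapsing generators two at a time via $(iii)$. The only cosmetic difference is in $(i)$, where you expand $(a+b)^2$ instead of $(2x)^2$, which has the pleasant side effect of also proving that Boolean rings are commutative --- a fact the paper uses implicitly but does not derive.
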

\begin{proof} 
Since $2x=(2x)^2=4x^2=4x$,  
we get that $2x=0$ by subtracting $2x$ from both sides. Now let
$\mathfrak p$ be a prime ideal in $R$. Then the quotient $R/\mathfrak p$ is a Boolean ring.  
For every $x\in R/\mathfrak p$, we have $x(1-x)=0$ which implies that $x=0$ or $x=1$ since $R/\mathfrak p$ is an integral domain. Claim $(ii)$ follows. Note that
\[
x(x+y-xy)=x^2+xy-x^2y=x+xy-xy=x.
\]
Hence $x,y\in(x+y-xy)$. 
Since $x+y-xy\in(x,y)$, claim $(iii)$ is clear. Let $I=( x_1,x_2,\ldots,x_n)$ be a finitely generated ideal of $R$. 
Since
\[
I=((x_1,x_2,\ldots,x_{n-1}),x_n),
\]
we may assume by induction on $n$ that $I=(x,y)$ for some $x,y\in R$. 
Claim $(iv)$ now follows from part $(iii)$. 
\end{proof}

\begin{defn}
A topological space $X$ is called {\it totally separated} if 
for all $x \ne y$ in $X$ there exists a closed and open set $U \subset X$ such that $x\in U$ and $y \notin U$. 
Equivalently, $X$ is totally separated if for any two distinct points 
$x,y\in X$ there exist disjoint open sets $U\subset X$ containing $x$ and $V\subset X$ containing $y$ such that $X$ is the union of 
$U$ and $V$. 
\end{defn}

\begin{prop}\label{spec} 
The spectrum $\mathrm{Spec}(R)$ of a Boolean ring with the Zariski topology is compact and totally separated.
\end{prop}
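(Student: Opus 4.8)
The plan is to treat the two assertions separately, since compactness holds for the spectrum of any commutative unital ring, whereas total separation is where the Boolean hypothesis genuinely enters.

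First I would establish compactness by the standard argument that $\Spec(R)$ is quasi-compact. Recall that the basic open sets $D(f) = \{\mathfrak{p} \in \Spec(R) : f \notin \mathfrak{p}\}$ form a basis for the Zariski topology, so it suffices to extract a finite subcover from a cover by such sets. If $\Spec(R) = \bigcup_i D(f_i)$, then no prime ideal contains all of the $f_i$, which forces the ideal they generate to be all of $R$; writing $1$ as a finite $R$-linear combination of the $f_i$ then exhibits a finite subcover. The Boolean hypothesis plays no role here, so this step is entirely routine.

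The heart of the matter is total separation, and the key observation is that in a Boolean ring every basic open set $D(f)$ is in fact clopen. Here I would use Proposition \ref{basic}$(ii)$: since each residue field $R/\mathfrak{p}$ equals $\F_2$, every $f \in R$ reduces to $0$ or $1$ modulo any prime, so $f \in \mathfrak{p}$ or $1 - f \in \mathfrak{p}$. Moreover $f(1-f) = f - f^2 = 0 \in \mathfrak{p}$ while $f + (1-f) = 1 \notin \mathfrak{p}$, so in fact exactly one of $f$ and $1-f$ lies in $\mathfrak{p}$. This shows that $D(f)$ and $D(1-f)$ partition $\Spec(R)$, whence $D(f) = \Spec(R) \setminus D(1-f)$ is closed as well as open.

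Finally, given distinct primes $\mathfrak{p} \neq \mathfrak{q}$, I would choose $f$ lying in exactly one of the two ideals, say $f \in \mathfrak{q}$ but $f \notin \mathfrak{p}$. Then $\mathfrak{p} \in D(f)$ and $\mathfrak{q} \notin D(f)$, and $D(f)$ is clopen by the previous step, so $D(f)$ is a clopen set separating the two points. This establishes total separation. I do not anticipate a genuine obstacle: the only point where the Boolean structure is used is the identification of $D(f)$ with the complement of $D(1-f)$, and this follows immediately from the fact that all residue fields are $\F_2$.
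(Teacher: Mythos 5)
Your proposal is correct and follows essentially the same route as the paper: both arguments rest on the observation that, since every residue field $R/\mathfrak p$ is $\F_2$ by Proposition \ref{basic}$(ii)$, exactly one of $f$ and $1-f$ lies in each prime, so $D(f)$ and $D(1-f)$ partition $\mathrm{Spec}(R)$, which separates any two distinct points. The only cosmetic differences are that you prove quasi-compactness directly instead of citing it, and you phrase total separation via a clopen set rather than via the equivalent complementary pair $D(x)$, $D(1-x)$; neither changes the substance.
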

\begin{proof} 
By \cite[Proposition I.1.1.10]{EGA1}, 
the spectrum of a commutative ring with a unity with the Zariski topology is compact. 
Thus, $\mathrm{Spec}(R)$ is compact.  
Now let $\mathfrak p,\mathfrak q\in\mathrm{Spec}(R)$ be two distinct points. 
Since they are maximal ideals by part $(ii)$ of Proposition \ref{basic}, 
there is an $x\in R$ such that $x\in\mathfrak p$ and $x\not\in\mathfrak q$. 
Since $R/\mathfrak p=\mathbb F_2$ by part $(ii)$ of Proposition \ref{basic}, the former is equivalent to $1-x\not\in\mathfrak p$. 
As usual, for every $f\in R$, 
let $D(f)\subseteq\mathrm{Spec}(R)$ denote the open subset 
\begin{align}\label{eq:def_of_Df}
D(f)=\{\mathfrak p\in\mathrm{Spec}(R)\mid f\not\in\mathfrak p\}.
\end{align}
Then $\mathfrak p\in D(1-x),\mathfrak q\in D(x)$, the intersection $D(x)\cap D(1-x)$ is empty by part $(ii)$ of Proposition \ref{basic}, while the union $D(x)\cup D(1-x)$ is $\mathrm{Spec}(R)$, since if $x,1-x\in\mathfrak m$ for some $\mathfrak m\in\mathrm{Spec}(R)$ then $1\in\mathfrak m$ which is a contradiction. 
\end{proof}

\begin{notn}\label{notn:sigma_map} 
Let $R$ be a Boolean ring. 
Then for every $a\in R$ the {\it corresponding section of the structure sheaf} 
of $\mathrm{Spec}(R)$ is the function
\[
\sigma(a) \colon \mathrm{Spec}(R) \to \mathbb F_2
\]
defined by $\sigma(a)(\pp) = 0$ if $a+ \pp = \pp$, i.e., $a\in \pp$, and $\sigma(a)(\pp) = 1$ if $a \notin \pp$.  
For every $a \in R$, $\sigma(a)$ is continuous
since $\{\frp \in \Spec(R) ~|~ \sigma(a)(\frp)=1\}=D(a)$
and $\{\frp \in \Spec(R) ~|~ \sigma(a)(\frp)=0\}=D(1-a)$ are open,
because $a\in\frp$ if and only if $1-a\notin \frp$,
by part $(ii)$ of Proposition \ref{basic}. 
The induced map
\[
\sigma \colon R \to \BB(\mathrm{Spec}(R))
\]
is a ring homomorphism. 
\end{notn}

\begin{thm}\label{sigma} 
For every Boolean ring $R$, the map
$\sigma \colon R \to \BB(\mathrm{Spec}(R))$ is an isomorphism. 
\end{thm}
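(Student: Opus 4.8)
The plan is to prove that $\sigma$ is both injective and surjective, the latter being the substantial part. Throughout I identify a continuous function $f \in \BB(\Spec(R))$ with the clopen subset $f^{-1}(1) \subseteq \Spec(R)$: since $\F_2$ is discrete, $f$ is continuous if and only if $f^{-1}(0)$ and $f^{-1}(1)$ are both open, hence both clopen, and $f = \chi_{f^{-1}(1)}$. This yields a bijection between $\BB(\Spec(R))$ and the set of clopen subsets of $\Spec(R)$. Under it, $\sigma(a)$ corresponds to the basic open set $D(a)$ of \eqref{eq:def_of_Df}, because $\sigma(a)(\frp) = 1$ precisely when $a \notin \frp$.

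For injectivity, suppose $\sigma(a) = 0$, i.e.\ $a \in \frp$ for every $\frp \in \Spec(R)$. Then $a$ lies in the intersection of all prime ideals, which is the nilradical of $R$. But a Boolean ring is reduced, since $a^2 = a$ gives $a^n = a$ for all $n \geq 1$, so its only nilpotent is $0$. Hence $a = 0$ and $\sigma$ is injective.

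For surjectivity it suffices, by the identification above, to show that every clopen subset $U \subseteq \Spec(R)$ has the form $D(a)$ for some $a \in R$. First I record that the collection $\{D(a) \mid a \in R\}$ is closed under complements and finite unions. Indeed, by part $(ii)$ of Proposition \ref{basic} we have $a \in \frp$ if and only if $1 - a \notin \frp$, so the complement of $D(a)$ is $D(1-a)$; moreover $D(a) \cup D(b) = \Spec(R) \setminus (V(a) \cap V(b)) = \Spec(R) \setminus V((a,b))$, and since $(a,b) = (a + b - ab)$ by part $(iii)$ of Proposition \ref{basic}, this equals $D(a+b-ab)$. Now, given a clopen $U$, openness lets us write $U = \bigcup_i D(a_i)$ as a union of basic opens contained in $U$. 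Because $U$ is closed in the compact space $\Spec(R)$ (Proposition \ref{spec}), it is itself compact, so finitely many of these basic opens already cover $U$; as each is contained in $U$, their union is exactly $U$, and closure under finite unions gives $U = D(a)$ for a single $a \in R$. Therefore $f = \sigma(a)$, proving surjectivity.

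The main obstacle is this last step, promoting an arbitrary clopen set to a single basic open $D(a)$, and it is exactly here that the two special features of Boolean rings enter: compactness of $\Spec(R)$, so that an open cover of the clopen set refines to a finite one, together with the identity $(a,b) = (a+b-ab)$, so that a finite union of basic opens is again basic. Combining injectivity and surjectivity with the fact, recorded in Notation \ref{notn:sigma_map}, that $\sigma$ is a ring homomorphism completes the proof.
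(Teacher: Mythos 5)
Your proof is correct and follows essentially the same route as the paper: injectivity via the vanishing of the nilradical, and surjectivity by covering the clopen set $D(f)$ with finitely many basic opens (using compactness) and merging them into a single $D(a)$. The only cosmetic difference is that you merge the basic opens by iterating the identity $D(a)\cup D(b)=D(a+b-ab)$ from part $(iii)$ of Proposition \ref{basic}, whereas the paper invokes part $(iv)$ (finitely generated ideals are principal), which is itself proved from $(iii)$ by the same induction.
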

\begin{proof} 
For every $x\in R$, we have $x^n=x$ by induction, so if $x$ is nilpotent, then it is zero. 
Therefore the nilradical of $R$ is zero. 
By \cite[Proposition 1.8 on page 5]{AM}, this shows that the intersection of all prime ideals in $R$ is zero. 
Since $\sigma(a)$ is the zero map if and only if $a$ lies in the intersection of all prime ideals by definition, 
this shows that $\sigma$ is injective. 
It remains to show that $\sigma$ is surjective. 
Let $f \colon \mathrm{Spec}(R)\to\mathbb F_2$ be a continuous function. 
Then the set
\[
D(f)=\{\mathfrak p\in\mathrm{Spec}(R)\mid f(\mathfrak p)=1\}
\]
is a closed subset. 
Thus, $D(f)$is compact since $\Spec(R)$ is compact by Proposition \ref{spec}. 
But $D(f)$ is also open, so it can be covered by open subsets of the form $D(a)$, 
where $a\in R$ according to the Zariski topology. 
Since $D(f)$ is compact, it can be covered by finitely many such, i.e., 
\[
D(f)=D(a_1)\cup D(a_2)\cup\cdots\cup D(a_n)
\]
for some $a_1,a_2,\ldots,a_n\in R$. 
By part $(iv)$ of Proposition \ref{basic}, there is an $a\in R$ such that $(a)=(a_1,\ldots,a_n)$. 
Then
\begin{align*}
D(a)&=\{\mathfrak p\in\mathrm{Spec}(R)\mid a\not\in\mathfrak p\} \\
& = \{\mathfrak p\in\mathrm{Spec}(R)\mid (a)\not\subseteq\mathfrak p\} \\
& = \{\mathfrak p\in\mathrm{Spec}(R)\mid (a_1,\ldots,a_n)\not\subseteq
\mathfrak p\} \\
& = \{\mathfrak p\in\mathrm{Spec}(R)\mid a_i\not\in
\mathfrak p\textrm{ for some $i$}\} \\
& = D(a_1)\cup D(a_2)\cup\cdots\cup D(a_n),\end{align*}
so $D(f) = D(a) = D(\sigma(a))$. 
Since $f$ and $\sigma(a)$ take values in $\mathbb F_2$, we get that $f = \sigma(a)$. 
\end{proof}

\begin{notn}\label{def1.7} 
Let $X$ be a topological space. 
Then for every $p\in X$ the set
\[
\beta(p)=\{ x \in \BB(X)\mid x(p)=0\}
\]
is the kernel of a surjective ring homomorphism $\BB(X)\to\mathbb F_2$, so it is a maximal ideal in $\BB(X)$. 
Consequently, we have an induced map
\[
\beta \colon X\to\mathrm{Spec}(\BB(X)).
\]
For every $x \in \BB(X)$, we get from \eqref{eq:def_of_Df} for $R=\BB(X)$ and $f=x$ an equality of sets 
\[
\beta^{-1}(D(x)) = \{p\in X\mid x(p)=1\}.
\]
Hence the preimage $\beta^{-1}(D(x))$ is open. 
Since the sets $\{D(x)\mid x \in \BB(X)\}$ form a sub-basis of
$\mathrm{Spec}(\BB(X))$, we get that 
$\beta \colon X\to \mathrm{Spec}(\BB(X))$ is continuous.  
\end{notn}

\begin{thm}\label{beta} 
When $X$ is compact and totally separated, then
$\beta$ is a homeomorphism. 
\end{thm}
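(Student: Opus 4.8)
The plan is to show that $\beta$ is a continuous bijection onto $\Spec(\BB(X))$ and then upgrade this to a homeomorphism by a compactness argument. Continuity was already established in Notation \ref{def1.7}, so three things remain: injectivity, surjectivity, and the topological conclusion.

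First I would deduce injectivity directly from total separation. Given distinct points $p \neq q$ in $X$, total separation supplies a clopen set $U$ with $p \in U$ and $q \notin U$. Its indicator function $\chi_U$ is continuous, since $\F_2$ carries the discrete topology and $U$ is clopen, so $\chi_U \in \BB(X)$. As $\chi_U(p) = 1$ but $\chi_U(q) = 0$, we have $\chi_U \notin \beta(p)$ while $\chi_U \in \beta(q)$, whence $\beta(p) \neq \beta(q)$.

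The main work is surjectivity, and here I would exploit compactness of $X$. Fix a maximal ideal $\mathfrak m \in \Spec(\BB(X))$ and consider the closed sets $Z(x) = \{p \in X \mid x(p) = 0\}$ as $x$ ranges over $\mathfrak m$. Two observations drive the argument. First, a direct check in characteristic two gives $Z(x) \cap Z(y) = Z(x + y + xy)$, and $x + y + xy \in (x,y) \subseteq \mathfrak m$ (compare part $(iii)$ of Proposition \ref{basic}), so finite intersections remain in the family. Second, $Z(z)$ is empty only when $z$ is the constant function $1$, which cannot belong to the proper ideal $\mathfrak m$; hence every $Z(z)$ with $z \in \mathfrak m$ is nonempty. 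Thus the family $\{Z(x) \mid x \in \mathfrak m\}$ is a collection of nonempty closed sets with the finite intersection property, and compactness of $X$ forces $\bigcap_{x \in \mathfrak m} Z(x) \neq \emptyset$. Any point $p$ in this intersection satisfies $\mathfrak m \subseteq \beta(p)$; since both ideals are maximal, $\mathfrak m = \beta(p)$, which proves surjectivity.

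Finally, to conclude that $\beta$ is a homeomorphism I would invoke the standard fact that a continuous bijection from a compact space to a Hausdorff space is a homeomorphism. The target $\Spec(\BB(X))$ is totally separated by Proposition \ref{spec}, hence Hausdorff, and $X$ is compact; therefore $\beta$ sends closed (equivalently compact) subsets of $X$ to compact, hence closed, subsets, so it is a closed map and thus a homeomorphism. The only genuine obstacle is the surjectivity step, where the key insight is that the vanishing loci $Z(x)$ form a filtered family of nonempty closed sets to which compactness applies; everything else is formal.
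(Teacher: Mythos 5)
Your proposal is correct and follows essentially the same route as the paper: injectivity via characteristic functions of clopen sets, surjectivity by showing the vanishing loci of elements of a maximal ideal form a family of nonempty closed sets with the finite intersection property (your identity $Z(x)\cap Z(y)=Z(x+y+xy)$ is just a direct use of part $(iii)$ of Proposition \ref{basic}, which the paper routes through part $(iv)$ and principal ideals), and the compact-to-Hausdorff upgrade at the end.
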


\begin{proof} Since $X$ is compact and $\mathrm{Spec}(\BB(X))$ is Hausdorff, 
it will be sufficient to show that $\beta$ is a bijection by \cite[I \S 9.4, Corollary 2 on page 87]{BourbakiTop}, 
which states that a continuous map from a compact space to a Hausdorff space is a homeomorphism if it is a bijection.  
Let $x,y\in X$ be two distinct points. 
Since $X$ is totally separated, 
there exist disjoint open sets $U\subset X$ containing $x$ and $V\subset X$ containing $y$ such that $X$ is the union of $U$ and $V$. 
Let $f \colon X\to\mathbb F_2$ be the characteristic function of $U$. 
Thus $f(x)=1$ if $x\in U$ and $f(x)=0$ if $x\in
X\setminus U=V$. 
Hence $f$ is continuous and an element in $\BB(X)$.  
Clearly $f\in\beta(y)$, but $f\not\in\beta(x)$, so $\beta$ is injective.

For every ideal $I \triangleleft \BB(X)$, 
let $Z(I)\subseteq X$ denote the closed subset
\[
Z(I)=\{x\in X\mid f(x)=0\quad(\forall f\in I)\}.
\]
We claim that for every proper ideal $I\triangleleft\BB(X)$ the set $Z(I)$ is non-empty. First consider the case when $I=(f)$ for some $f \in \BB(X)$. Then
\[
Z(I)=\{x\in X\mid f(x)=0\},
\]
so if this set is empty, then $f$ is the identically one function, and hence $I=(1)=\BB(X)$ is not proper, a contradiction. 
Next consider the case when $I$ is finitely generated. 
Then $I$ is principal by part $(iv)$ of Proposition \ref{basic}, 
so $Z(I)$ is non-empty by the above. 
Finally, consider the general case. 
Then $Z(I)$ is the intersection of sets of the form $Z(J)$ where $J$ is a finitely generated ideal of $I$. Since the latter collection of sets is closed under finite intersections, and each member is non-empty by the above, we get $Z(I)$ is also non-empty, since $X$ is compact.

Now let $\mathfrak m\triangleleft\BB(X)$ be a maximal ideal. 
By the claim in the previous paragraph, there is an $x\in Z(\mathfrak m)$. 
Clearly $\beta(x)\supseteq\mathfrak m$, but $\mathfrak m$ is maximal, and hence
$\beta(x)=\mathfrak m$. 
Therefore $\beta$ is surjective as well. 
\end{proof}

\begin{notn} 
Let $\mathbf{BO}$ denote the category of Boolean rings where morphisms are ring homomorphisms, 
and let $\mathbf{CTS}$ denote the category of compact, totally separated topological spaces where morphisms are continuous maps. 
There are two contravariant functors
\[
\BB \colon \mathbf{CTS}\to\mathbf{BO},\quad X\mapsto \BB(X),
\]
which is well-defined as we saw in Example \ref{ex1.2}, and 
\[
\SB \colon \mathbf{BO}\to\mathbf{CTS},\quad R\mapsto\mathrm{Spec}(R),
\]
which is well-defined by Proposition \ref{spec}.
\end{notn}

\begin{thm}[Stone duality]\label{duality} 
The functors $\BB$ and $\SB$ are a pair of dualities of categories. 
\end{thm}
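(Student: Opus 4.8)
The plan is to exhibit $\sigma$ and $\beta$ as natural isomorphisms relating the two composite functors to the respective identities: since $\BB$ and $\SB$ are contravariant, both composites $\BB\circ\SB$ and $\SB\circ\BB$ are covariant endofunctors, and a duality amounts to natural isomorphisms $\sigma\colon \id_{\mathbf{BO}} \Rightarrow \BB\circ\SB$ and $\beta\colon \id_{\mathbf{CTS}} \Rightarrow \SB\circ\BB$. The pointwise statements are already in hand. For each Boolean ring $R$, the component $\sigma_R\colon R\to\BB(\Spec(R))$ is an isomorphism of Boolean rings by Theorem \ref{sigma}; note that $\Spec(R)$ lies in $\mathbf{CTS}$ by Proposition \ref{spec}, so $\BB(\Spec(R))$ is defined. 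For each $X$ in $\mathbf{CTS}$, the component $\beta_X\colon X\to\Spec(\BB(X))$ is a homeomorphism by Theorem \ref{beta}. Thus both families consist of isomorphisms, and it remains only to verify that they are natural.

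For the naturality of $\sigma$, I would take a ring homomorphism $\varphi\colon R\to S$ and unwind the action of each functor on morphisms. Here $\SB(\varphi)=\Spec(\varphi)$ sends $\pp\in\Spec(S)$ to the contracted prime $\varphi^{-1}(\pp)\in\Spec(R)$, and $\BB(\Spec(\varphi))$ sends a continuous function $g$ to its pullback $g\circ\Spec(\varphi)$. Evaluating the two composites on an element $a\in R$ and a point $\pp\in\Spec(S)$ reduces the required identity $\BB(\Spec(\varphi))\circ\sigma_R=\sigma_S\circ\varphi$ to the tautology that $a\in\varphi^{-1}(\pp)$ if and only if $\varphi(a)\in\pp$, which is immediate from the definition of $\sigma$ in Notation \ref{notn:sigma_map}.

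The naturality of $\beta$ is entirely analogous. For a continuous map $f\colon X\to Y$ in $\mathbf{CTS}$, the functor $\BB$ sends $f$ to the pullback $\BB(f)\colon h\mapsto h\circ f$, and $\Spec(\BB(f))$ sends a prime ideal $\mathfrak{q}$ to $\BB(f)^{-1}(\mathfrak{q})$. Chasing a point $x\in X$ through both composites, the identity $\Spec(\BB(f))\circ\beta_X=\beta_Y\circ f$ reduces to the equality of ideals $\{h\in\BB(Y)\mid h(f(x))=0\}=\beta_Y(f(x))$, which holds by the definition of $\beta$ in Notation \ref{def1.7}. With both naturality squares commuting, $\sigma$ and $\beta$ are natural isomorphisms, and the asserted duality follows.

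I do not expect a genuine conceptual obstacle here: the substantive analytic content is already packaged in Theorems \ref{sigma} and \ref{beta}. The only real care needed is bookkeeping — correctly identifying the action of each contravariant functor on morphisms, namely pullback of functions for $\BB$ and contraction of prime ideals for $\SB$, so that the two naturality diagrams reduce to the elementary point-evaluations above.
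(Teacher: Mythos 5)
Your proposal is correct and follows the same route as the paper: both exhibit $\sigma$ and $\beta$ as natural isomorphisms from the identities to $\BB\circ\SB$ and $\SB\circ\BB$, relying on Theorems \ref{sigma} and \ref{beta} for the pointwise isomorphisms. The only difference is that you explicitly verify the two naturality squares (reducing them to $a\in\varphi^{-1}(\pp)\iff\varphi(a)\in\pp$ and to the definition of $\beta$), a routine check the paper leaves implicit.
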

\begin{proof} 
By Theorem \ref{sigma}, the map $\sigma$ is a natural isomorphism between the identity of $\mathbf{BO}$ and $\BB \circ \SB$. 
By Theorem \ref{beta}, the map $\beta$ is a natural isomorphism between the identity of $\mathbf{CTS}$ and $\SB \circ \BB$.
\end{proof}

\begin{cor}\label{6.4/2016} 
Let $R$ be a Boolean ring. 
Then the following are equivalent:
\begin{enumerate}
\item[$(i)$] $R$ is finite.
\item[$(ii)$] $R$ is finitely generated as an $\mathbb F_2$-algebra.
\item[$(iii)$] $R$ is Noetherian.
\item[$(iv)$] $R$ is Artinian.
\item[$(v)$] $\mathrm{Spec}(R)$ is finite.
\item[$(vi)$] $R\cong\mathbb F_2^{X}$ for a finite set $X$.
\end{enumerate}
In this case $R\cong\mathbb F_2^{\mathrm{Spec}(R)}$.
\end{cor}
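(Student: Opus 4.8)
The plan is to prove the six conditions equivalent by establishing the cycle of implications
$(i) \Rightarrow (ii) \Rightarrow (iii) \Rightarrow (iv) \Rightarrow (v) \Rightarrow (vi) \Rightarrow (i)$,
drawing on two sources of input: standard facts about zero-dimensional commutative rings, which apply here because every prime of a Boolean ring is maximal by part $(ii)$ of Proposition \ref{basic}; and the isomorphism $\sigma \colon R \isoto \BB(\Spec(R))$ of Theorem \ref{sigma}. The displayed final claim $R \cong \F_2^{\Spec(R)}$ will fall out of the proof of $(v) \Rightarrow (vi)$.

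First I would dispatch the formal implications. For $(i) \Rightarrow (ii)$, note that by part $(i)$ of Proposition \ref{basic} a Boolean ring is an $\F_2$-algebra, so a finite one is finite-dimensional over $\F_2$ and hence finitely generated as an algebra. For $(ii) \Rightarrow (iii)$, apply the Hilbert basis theorem: a finitely generated algebra over the Noetherian ring $\F_2$ is Noetherian. For $(iii) \Rightarrow (iv)$, recall that a Noetherian ring of Krull dimension zero is Artinian (cf.\ \cite[Theorem 8.5]{AM}); since all primes of $R$ are maximal, $R$ has dimension zero, so Noetherian implies Artinian. For $(iv) \Rightarrow (v)$, an Artinian ring has only finitely many maximal ideals (cf.\ \cite[Proposition 8.3]{AM}), and as all primes of $R$ are maximal this says exactly that $\Spec(R)$ is finite. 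Finally $(vi) \Rightarrow (i)$ is immediate, since $\F_2^X$ has $2^{|X|}$ elements for a finite set $X$. Note that the full cycle also yields the individual equivalence $(iii) \Leftrightarrow (iv)$.

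The one step carrying genuine content is $(v) \Rightarrow (vi)$, and it also yields the final assertion. By Proposition \ref{spec} the space $\Spec(R)$ is totally separated; if it is in addition finite, then it is discrete, since for each point $\frp$ one separates $\frp$ from each of the finitely many other points by a clopen set and intersects these to obtain the clopen singleton $\{\frp\}$. On a discrete finite space every $\F_2$-valued function is continuous, so $\BB(\Spec(R)) = \F_2^{\Spec(R)}$. Combining this with the isomorphism $\sigma \colon R \isoto \BB(\Spec(R))$ of Theorem \ref{sigma} gives $R \cong \F_2^{\Spec(R)}$, which is $(vi)$ with $X = \Spec(R)$ and simultaneously the closing statement of the corollary.

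I do not anticipate a genuine obstacle: the argument is an assembly of the already-established Stone-duality machinery with textbook facts about Artinian rings. The only point requiring a moment's care is the reduction ``finite and totally separated $\Rightarrow$ discrete'', which I would record explicitly as above, since it is precisely what licenses the identification $\BB(\Spec(R)) = \F_2^{\Spec(R)}$ that turns the abstract Stone-duality isomorphism into the concrete product description.
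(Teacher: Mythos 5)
Your proof is correct and follows essentially the same chain of implications as the paper, with the same use of Atiyah--Macdonald for $(iii)\Rightarrow(iv)\Rightarrow(v)$ and the same reduction of $(v)\Rightarrow(vi)$ to Theorem \ref{sigma} via the observation that a finite totally separated space is discrete. The only cosmetic difference is that the paper cites the structure theorem for Artinian rings rather than the finiteness of the set of maximal ideals for $(iv)\Rightarrow(v)$; both are valid.
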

\begin{proof} 
Every Boolean ring is an $\mathbb F_2$-algebra, so if $R$ is finite, then it is finitely generated as an $\mathbb F_2$-algebra, and hence $(i)$ implies $(ii)$. Every finitely generated $\mathbb F_2$-algebra is Noetherian by Hilbert's basis theorem, so $(ii)$ implies $(iii)$. 
Every Boolean ring is zero dimensional by part $(ii)$ of Proposition \ref{basic}, so if it is Noetherian, it is Artinian by a standard theorem in commutative algebra (see \cite[Theorem 8.5 on page 90]{AM}). Therefore $(iii)$ implies $(iv)$. 
Every Artinian ring is a finite direct product of Artinian local rings (see \cite[Theorem 8.7 on page 90]{AM}), so its spectrum is finite. 
Therefore $(iv)$ implies $(v)$. 
Now let $R$ be a Boolean ring whose spectrum is finite. 
Since $\mathrm{Spec}(R)$ is totally separated by Proposition \ref{spec}, it is discrete. 
This implies that $\F_2^{\Spec(R)}$ is isomorphic to $\BB(\Spec(R)$ by the definition of the $\BB(X)$ made in Example \ref{ex1.2} 
since $\Spec(R)$ is discrete.  
By Theorem \ref{sigma}, $R$ is isomorphic to $\BB(\Spec(R))$ via $\sigma$. 
Thus, we get $R \cong \mathbb F_2^{\mathrm{Spec}(R)}$.   
In particular, $(v)$ implies $(vi)$. If $R\cong\mathbb F_2^{X}$ for a finite set $X$, then $R$ is clearly finite, so $(vi)$ implies $(i)$.
\end{proof}

\begin{notn} 
Let $\mathbf{FBO}$ denote the category of finite Boolean rings where morphisms are ring homomorphisms, and let $\mathbf{FTS}$ denote the category of finite, totally separated topological spaces where morphisms are continuous maps. Note that the latter is the same as the category of finite, discrete topological spaces. There are two restrictions of functors
\[
\BB|_{\mathbf{FTS}} \colon \mathbf{FTS}\to\mathbf{FBO},\quad X\mapsto \BB(X)
\]
and 
\[
\SB|_{\mathbf{FBO}} \colon \mathbf{FBO}\to\mathbf{FTS},\quad R \mapsto\mathrm{Spec}(R),
\]
where the latter is well-defined by Corollary \ref{6.4/2016}.
\end{notn}

\begin{thm}\label{fin-duality} 
The functors $\BB|_{\mathbf{FTS}}$ and
$\SB|_{\mathbf{FTS}}$ are a pair of dualities of categories. 
\end{thm}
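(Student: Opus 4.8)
The plan is to deduce this finite duality directly from the general Stone duality of Theorem \ref{duality} by checking that the functors $\BB$ and $\SB$ restrict to the full subcategories $\mathbf{FTS} \subseteq \mathbf{CTS}$ and $\mathbf{FBO} \subseteq \mathbf{BO}$, and that these restrictions carry one subcategory into the other. First I would record that $\mathbf{FTS}$ is genuinely a full subcategory of $\mathbf{CTS}$: a finite discrete space is compact, and it is totally separated since for distinct points one of them forms a clopen set; conversely a finite totally separated space is discrete, as noted in the preceding notation. Thus the objects of $\mathbf{FTS}$ are precisely the finite objects of $\mathbf{CTS}$, and morphisms agree by definition. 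Likewise $\mathbf{FBO}$ is the full subcategory of $\mathbf{BO}$ on the finite Boolean rings.

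Next I would verify that each functor lands in the correct finite subcategory. For $\SB|_{\mathbf{FBO}}$ this is exactly the implication $(i) \Rightarrow (v)$ of Corollary \ref{6.4/2016}: if $R$ is finite then $\Spec(R)$ is finite, and being totally separated by Proposition \ref{spec} it lies in $\mathbf{FTS}$; this well-definedness was already observed. For $\BB|_{\mathbf{FTS}}$ I would note that when $X$ is finite and discrete every function $X \to \F_2$ is continuous, so $\BB(X) = \F_2^X$ is finite, and by the implication $(vi) \Rightarrow (i)$ of Corollary \ref{6.4/2016} it is a finite Boolean ring, hence an object of $\mathbf{FBO}$.

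With both restrictions well-defined, the conclusion becomes formal. The natural isomorphism $\sigma \colon \id_{\mathbf{BO}} \isoto \BB \circ \SB$ of Theorem \ref{sigma} has, at each object $R$ of $\mathbf{FBO}$, the component $\sigma_R \colon R \isoto \BB(\Spec(R))$, whose target is finite since $\Spec(R)$ is; as $\mathbf{FBO}$ is a full subcategory, $\sigma_R$ remains an isomorphism there. Similarly the components of $\beta \colon \id_{\mathbf{CTS}} \isoto \SB \circ \BB$ of Theorem \ref{beta} at objects of $\mathbf{FTS}$ are isomorphisms in $\mathbf{FTS}$. Naturality is inherited by restriction, so $\sigma$ and $\beta$ restrict to natural isomorphisms $\id_{\mathbf{FBO}} \isoto \BB|_{\mathbf{FTS}} \circ \SB|_{\mathbf{FBO}}$ and $\id_{\mathbf{FTS}} \isoto \SB|_{\mathbf{FBO}} \circ \BB|_{\mathbf{FTS}}$, which is precisely the claimed pair of dualities.

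The only genuine content, and thus the step I would treat most carefully, is the bookkeeping that the two functors preserve finiteness in both directions; everything else is the purely formal fact that a duality restricts to any pair of full subcategories carried into one another. Since this finiteness bookkeeping is supplied in full by Corollary \ref{6.4/2016}, I anticipate no substantive obstacle beyond stating the relevant implications precisely.
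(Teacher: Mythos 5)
Your proposal is correct and is essentially the paper's own proof: the paper likewise obtains the finite duality by restricting the natural isomorphisms $\sigma$ and $\beta$ to $\mathbf{FBO}$ and $\mathbf{FTS}$, with the finiteness bookkeeping supplied by Corollary \ref{6.4/2016}. Your extra check that $\BB|_{\mathbf{FTS}}$ lands in $\mathbf{FBO}$ is a welcome (if routine) addition that the paper leaves implicit.
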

\begin{proof} 
The restrictions of the natural isomorphisms $\beta$ and $\sigma$ onto $\mathbf{FTS}$ and $\mathbf{FBO}$ are the respective required natural isomorphisms. 
\end{proof}

\begin{defn} 
Let $R$ be a Boolean ring. 
We say that two elements $x,y\in R$ are {\it orthogonal} if $xy=0$. 
We say that an element $a\in R$ is an {\it atom} 
if it is non-zero and cannot be written as the sum of two non-zero orthogonal elements of $R$. 
The {\it support} of an element $x\in R$ is the subset $D(x) \subseteq \Spec(R)$ introduced in \eqref{eq:def_of_Df}.  
\end{defn}

\begin{lemma}\label{ortho} 
Let $R$ be a Boolean ring. Then the following holds:
\begin{enumerate}
\item[$(i)$] Two elements $x,y\in R$ are orthogonal if and only if the intersection of their support is empty. 
\item[$(ii)$] A non-zero element $a\in R$ is an atom if and only 
if its support cannot be written as the disjoint union of two non-empty open and closed subsets of $\Spec(R)$. 
\item[$(iii)$] Every two distinct atoms of $R$ are orthogonal to each other. 
\end{enumerate}
\end{lemma}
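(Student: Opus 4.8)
The plan is to transport the whole statement across the Stone isomorphism $\sigma \colon R \isoto \BB(\Spec(R))$ of Theorem \ref{sigma}, under which an element $a\in R$ becomes the $\F_2$-valued continuous function whose set of $1$'s is exactly the support $D(a)$, a clopen subset of the compact totally separated space $\Spec(R)$ (Proposition \ref{spec}). Two facts will carry almost all the weight. First, primeness of each $\mathfrak p$ gives $D(xy)=D(x)\cap D(y)$, since $xy\notin\mathfrak p$ precisely when $x\notin\mathfrak p$ and $y\notin\mathfrak p$. Second, $\sigma$ being an isomorphism means an element is determined by its support and vanishes iff its support is empty; moreover $\sigma$ is \emph{onto} the continuous functions, so every clopen subset of $\Spec(R)$ is the support $D(a)$ of a (unique) element $a\in R$.

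Part $(i)$ is then immediate: $x,y$ are orthogonal iff $xy=0$ iff $D(xy)=\emptyset$ (as $\sigma$ is injective and $\sigma(xy)$ is the indicator of $D(xy)$) iff $D(x)\cap D(y)=\emptyset$. For part $(ii)$ I would use that, $\F_2$ having characteristic two, $\sigma(x+y)=\sigma(x)+\sigma(y)$; when $x,y$ are orthogonal their supports are disjoint by $(i)$, so $\sigma(x)+\sigma(y)$ is the indicator of $D(x)\sqcup D(y)$. Thus a decomposition $a=x+y$ into nonzero orthogonal summands yields a splitting $D(a)=D(x)\sqcup D(y)$ into two nonempty clopen pieces. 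Conversely, given a splitting $D(a)=U\sqcup V$ into nonempty clopen sets, surjectivity of $\sigma$ produces $x,y\in R$ with $D(x)=U$, $D(y)=V$; these are nonzero, orthogonal by $(i)$, and satisfy $\sigma(x+y)=\sigma(a)$, hence $x+y=a$. So $a$ fails to be an atom exactly when $D(a)$ splits. Here I would record the small point that, since $D(a)$ is itself clopen in $\Spec(R)$, a subset clopen in $D(a)$ is clopen in $\Spec(R)$, so the two topologies in the statement agree.

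Part $(iii)$ follows from atomicity. If $a,b$ are distinct atoms with $D(a)\cap D(b)\neq\emptyset$, then $D(a)=(D(a)\cap D(b))\sqcup(D(a)\setminus D(b))$ is a clopen splitting; since $a$ is an atom and the first piece is nonempty, part $(ii)$ forces $D(a)\setminus D(b)=\emptyset$, i.e.\ $D(a)\subseteq D(b)$. By symmetry $D(b)\subseteq D(a)$, so $D(a)=D(b)$ and hence $a=b$ by injectivity of $\sigma$, a contradiction. Therefore $D(a)\cap D(b)=\emptyset$, and $a,b$ are orthogonal by $(i)$.

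I expect the only genuine subtlety to lie in part $(ii)$: making the correspondence between orthogonal decompositions of $a$ and clopen splittings of $D(a)$ tight in both directions. This rests on the surjectivity of $\sigma$ (to turn clopen sets back into ring elements) and on the characteristic-two computation that disjoint-support sums match disjoint unions. Parts $(i)$ and $(iii)$ are then essentially formal consequences of $(ii)$ and the support calculus.
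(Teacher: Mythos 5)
Your proof is correct and follows the same overall strategy as the paper: transport everything across the Stone isomorphism $\sigma$ of Theorem \ref{sigma} and work with the support calculus $D(xy)=D(x)\cap D(y)$ and $D(x+y)=D(x)\sqcup D(y)$ for orthogonal $x,y$. Parts $(i)$ and $(ii)$ match the paper's argument almost step for step; the only cosmetic difference is that the paper obtains $D(x+y)=D(x)\cup D(y)$ from the ideal identity $(x,y)=(x+y-xy)=(x+y)$ of Proposition \ref{basic}$(iii)$, whereas you get it from the additivity of $\sigma$ together with the disjointness of the supports. The one genuine divergence is part $(iii)$: you deduce it from $(ii)$ by splitting $D(a)$ as $\left(D(a)\cap D(b)\right)\sqcup\left(D(a)\setminus D(b)\right)$ and concluding $D(a)=D(b)$, hence $a=b$ by injectivity of $\sigma$; the paper instead argues purely algebraically from the orthogonal decomposition $a=ab+a(1-b)$ (noting $ab\cdot a(1-b)=a^2(b-b^2)=0$), which by atomicity forces $a=ab$ and, symmetrically, $b=ab$, so $a=b$. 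Both are valid: the paper's version of $(iii)$ never leaves the ring and is independent of $(i)$ and $(ii)$, while yours keeps the entire lemma inside the topological dictionary at the cost of making $(iii)$ depend on $(ii)$.
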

\begin{proof} 
For $\frp\in\Spec(R)$, we have $xy\notin\frp$ if and only if $x\notin\frp$ and $y\notin\frp$.
Hence $D(xy)=D(x)\cap D(y)$. 
Thus, the support of the product $xy$ is the intersection of the supports of $x$ and $y$. 
As explained in the first paragraph of the proof of Theorem \ref{sigma}, 
the nilradical of $R$ is zero and hence the intersection of all prime ideals of $R$ is zero by \cite[Proposition 1.8 on page 5] {AM}.  
Hence, for $a\in R$, $D(a)=\emptyset$ if and only if $a=0$. 
This shows that the support of an element of $R$ is empty if and only if the element is zero. 
Claim $(i)$ now follows. 
By part $(iii)$ of Proposition \ref{basic}, 
$(x,y)=(x+y)$, since $xy=0$.
Hence, as in the proof of Theorem \ref{sigma}, 
$D(a)=D(x)\cup D(y)$.
By part $(i)$, $D(x)\cap D(y)=\emptyset$. 
Therefore, $D(a)$ is the disjoint union $D(a)=D(x)\sqcup D(y)$.  
Also, as in the end of the proof of Proposition \ref{spec}, 
$\Spec(R)=D(x)\sqcup   D(1-x)$ and $\Spec(R)=D(y)\sqcup  D(1-y)$,
so $D(x)$ and $D(y)$ are open and closed subsets of $\Spec(R)$.  
%
Now assume that the support of $a$ is the disjoint union of the non-empty open and closed subsets $X$ and $Y$ of $\Spec(R)$. 
Let $f_X,f_Y \colon \Spec(R) \to \F_2$
be the characteristic functions of $X$ and $Y$, respectively, 
which are continuous by the assumption that $X$ and $Y$
are open and closed subsets of $\Spec(R)$. 
Then, as in the proof of Theorem \ref{sigma}, 
there exist $x,y\in R$ such that
$X=D(f_X)=D(x)$ and $Y=D(f_Y)=D(y)$. 
Since $D(\sigma(a))=D(a)=D(x)\sqcup D(y)=D(x+y)=D(\sigma(x+y))$, we have
$\sigma(a)=\sigma(x+y)$, so by Theorem \ref{sigma}, $a=x+y$.  
Let $a,b\in R$ be two atoms whose product is non-zero. 
Then $a=ab+a(1-b)$ and $aba(1-b)=a^2(b-b^2)=0$, so $a(1-b)=0$ by the definition of atoms. 
We get that $a=ab$. The same reasoning for $b$ shows that $b=ab$. 
Therefore $a=b$, and hence $(iii)$ holds. 
\end{proof}

\begin{cor}\label{atomic} 
Let $R$ be a finite Boolean ring. 
Then the atoms of $R$ form a natural basis of $R$ as an $\F_2$-vector space whose elements are orthogonal to each other. 
Moreover, every orthogonal basis consists of atoms. 
\end{cor}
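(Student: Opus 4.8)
The plan is to reduce everything to the concrete model $R \cong \F_2^{\Spec(R)}$ furnished by Corollary \ref{6.4/2016}, under which atoms become exactly the indicator functions of single points. First I would invoke that corollary: since $R$ is finite, the map $\sigma$ of Theorem \ref{sigma} is an isomorphism $R \cong \F_2^{\Spec(R)}$ with $\Spec(R)$ a finite set. Moreover, as recorded in the proof of that corollary, $\Spec(R)$ is totally separated and finite, hence discrete, so \emph{every} subset of $\Spec(R)$ is open and closed. Under this identification each $x \in R$ corresponds to the indicator function of its support $D(x)$, because $x^2 = x$ forces $\sigma(x)$ to take only the values $0$ and $1$ (see Notation \ref{notn:sigma_map}).

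Next I would apply Lemma \ref{ortho}$(ii)$: a non-zero $a \in R$ is an atom if and only if $D(a)$ cannot be partitioned into two non-empty open and closed subsets. Since in the finite discrete space $\Spec(R)$ all subsets are open and closed, this condition says precisely that $D(a)$ is a singleton. Hence the atoms of $R$ are exactly the indicator functions $\delta_p$ of single points $p \in \Spec(R)$, one for each point. The first assertion is then immediate: $\{\delta_p : p \in \Spec(R)\}$ is the standard $\F_2$-basis of $\F_2^{\Spec(R)}$, as every $f$ equals the sum of the $\delta_p$ over the points where $f = 1$, and these are plainly linearly independent. Their pairwise orthogonality is Lemma \ref{ortho}$(iii)$ (or directly $\delta_p \delta_q = 0$ for $p \ne q$), and the basis is \emph{natural} in that the set of atoms is intrinsically defined, independently of the choice of isomorphism.

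For the converse, let $\{b_1, \dots, b_n\}$ be an arbitrary orthogonal basis. Being a basis, it contains no zero element and $n = \dim_{\F_2} R = |\Spec(R)|$, so each support $S_i \coloneqq D(b_i)$ is non-empty. By Lemma \ref{ortho}$(i)$, orthogonality forces the $S_i$ to be pairwise disjoint. A counting argument then finishes the proof:
\[
n \;\le\; \sum_{i=1}^n |S_i| \;=\; \Big| \bigcup_{i=1}^n S_i \Big| \;\le\; |\Spec(R)| \;=\; n,
\]
where the first inequality holds because each $|S_i| \ge 1$, the middle equality because the $S_i$ are disjoint, and the last inequality because $\bigcup_i S_i \subseteq \Spec(R)$. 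Equality throughout forces $|S_i| = 1$ for every $i$, so each $b_i$ has singleton support and is therefore an atom.

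The main obstacle, although mild, is the converse direction: one must observe that in the finite discrete space $\Spec(R)$ the atom criterion of Lemma \ref{ortho}$(ii)$ collapses to "singleton support," and then combine orthogonality (disjointness of supports) with the cardinality constraint $n = \dim_{\F_2} R$ to make the counting inequality tight. Everything else is a direct translation of the already-established structure of $R$ and Lemma \ref{ortho}.
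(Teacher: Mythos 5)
Your proposal is correct, and its first half (identifying $R$ with $\F_2^{\Spec(R)}$ via Corollary \ref{6.4/2016}, reading off from Lemma \ref{ortho}$(ii)$ that atoms are exactly the indicator functions of singletons, and deducing the basis and orthogonality claims) coincides with the paper's argument. Where you diverge is the converse: the paper fixes an orthogonal basis $e_1,\dots,e_n$, picks an atom $x$ whose support is a single point of $D(e_1)$, uses orthogonality of the $e_i$ to show $xe_i=0$ for $i\neq 1$, and then expands $x$ in the basis to force $x=e_1$; you instead count, observing that a basis has exactly $n=\dim_{\F_2}R=\lvert\Spec(R)\rvert$ elements with non-empty pairwise disjoint supports, so the inequality $n\le\sum_i\lvert D(b_i)\rvert=\lvert\bigcup_i D(b_i)\rvert\le n$ is forced to be an equality and every support is a singleton. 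Your counting argument is shorter and avoids the basis-expansion step entirely, at the cost of using the dimension count $\dim_{\F_2}R=\lvert\Spec(R)\rvert$ explicitly; the paper's argument is marginally more robust in that it identifies \emph{which} atom each $e_i$ is (namely, any atom supported inside $D(e_i)$ must equal $e_i$), but for the statement as given both routes are complete and correct.
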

\begin{proof} 
By Corollary \ref{6.4/2016},  the topological space
$\mathrm{Spec}(R)$ is finite, discrete, and $R\cong\mathbb F_2^{\mathrm{Spec}(R)}$. 
Therefore, an element of $R$ is an atom if and only if its support is a point by claim $(ii)$ of Lemma \ref{ortho}. 
These clearly form a basis of $R$ and they are orthogonal by claim $(iii)$ of Lemma \ref{ortho}. 

Now let $e_1,e_2,\ldots,e_n$ be an orthogonal basis. 
We need to show that each $e_i$ is an atom. 
We may assume without the loss of generality that $i=1$. 
Let $x\in R$ be such that $\sigma(x)$ is the characteristic function of an element of the support of $e_1$.  
Then $x$ is an atom. 
The supports of $e_1$ and $e_i$, $i\neq 1$, are disjoint since they are orthogonal by claim $(i)$ of Lemma \ref{ortho}. 
Hence $x$ and $e_i$ are orthogonal, so $xe_i=0$. 
Now write $x$ as a linear combination $\sum_{j\in J}e_j$ of the $e_i$ for $J \subseteq \{1,\ldots,n\}$.  
Since $0 = xe_i = \sum_{j\in J}e_je_i$ only if $i$ is not in $J$, 
the above argument implies $x=0$ or $x=e_1$. 
The former is not possible, since $x$ is not zero. 
Hence we get $x=e_1$, and $e_1$ is an atom by the first paragraph of the proof. 
\end{proof}


\section{All about profinite spaces}\label{sec:profinite_spaces}

In this section we collect the results we need about profinite topological spaces. 

\begin{defn} 
We say that a topological space is {\it profinite} if it is the projective limit of discrete, finite topological spaces. 
Recall that a topological space $X$ is {\it totally disconnected}  
if for every point $x \in X$ the connected component of $x$ in $X$ is $\{x\}$.
\end{defn}

Every totally separated topological space is totally disconnected, but the converse is not true: 
there are totally disconnected topological spaces which are not Hausdorff, while every totally separated topological space is Hausdorff. 
However, the following is true:

\begin{thm}\label{mama} 
Let $X$ be a topological space. 
Then the following are equivalent:
\begin{enumerate}
\item[(i)] $X$ is profinite.
\item[(ii)] $X$ is homeomorphic to a closed subspace of a product of discrete, finite topological spaces.
\item[(iii)] $X$ is compact, totally disconnected and Hausdorff.
\item[(iv)] $X$ is compact and totally separated.
\end{enumerate}
\end{thm}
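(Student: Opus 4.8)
The plan is to prove the equivalence by running the cycle $(i)\Rightarrow(ii)\Rightarrow(iii)\Rightarrow(iv)\Rightarrow(i)$, closing the loop with the Stone duality of Theorem \ref{duality}.

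First, for $(i)\Rightarrow(ii)$: if $X=\varprojlim_j X_j$ with each $X_j$ finite and discrete, then $X$ is realised as the set of compatible tuples inside $\prod_j X_j$. Since each $X_j$ is Hausdorff, the equaliser conditions cutting out these tuples are closed, so $X$ is homeomorphic to a closed subspace of a product of finite discrete spaces. For $(ii)\Rightarrow(iii)$: a product of finite discrete spaces is compact by Tychonoff's theorem, and it is manifestly Hausdorff and totally disconnected; compactness is inherited by closed subspaces, and the Hausdorff and totally disconnected properties by arbitrary subspaces, so $X$ satisfies $(iii)$.

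The crux is $(iii)\Rightarrow(iv)$. Here I would invoke the classical fact that in a compact Hausdorff space the connected component of a point $x$ coincides with its quasi-component, the intersection $Q_x$ of all clopen sets containing $x$: the inclusion of the component in $Q_x$ is formal, while the reverse inclusion follows from an argument using normality of $X$ together with compactness to exhibit $Q_x$ as connected. Granting this, total disconnectedness forces $Q_x=\{x\}$ for every $x$, so given $x\neq y$ there is a clopen $U$ with $x\in U$ and $y\notin U$; then $U$ and $X\setminus U$ are disjoint open sets covering $X$, which is exactly total separatedness. This identification of components with quasi-components is the one genuinely topological input, and it is the step I expect to require the most care (or an explicit citation).

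Finally, for $(iv)\Rightarrow(i)$ I would use the duality directly. By Theorem \ref{beta} we have $X\cong\Spec(\BB(X))$. Writing the Boolean ring $R=\BB(X)$ as the directed union of its finitely generated subrings $R_i$, each $R_i$ is finite by Corollary \ref{6.4/2016} (a finitely generated subring of a Boolean ring is a finitely generated $\F_2$-algebra), and each $\Spec(R_i)$ is finite by Corollary \ref{6.4/2016} and discrete because it is finite and totally separated by Proposition \ref{spec}. Since $R=\varinjlim_i R_i$ is a colimit in the category of Boolean rings, the anti-equivalence of Theorem \ref{duality} sends it to the limit $\varprojlim_i \Spec(R_i)$, computed as an inverse limit of topological spaces; equivalently, this is the standard fact that $\Spec$ turns filtered colimits of rings into cofiltered limits of spaces. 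Hence $X\cong\varprojlim_i\Spec(R_i)$ is a projective limit of finite discrete spaces, i.e.\ profinite, which closes the cycle.
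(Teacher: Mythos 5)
Your proposal is correct, and the first three implications track the paper exactly: $(i)\Rightarrow(ii)$ via the compatible-tuples realisation, $(ii)\Rightarrow(iii)$ via Tychonoff and heredity of the relevant properties, and $(iii)\Rightarrow(iv)$ via the identification of components with quasi-components in a compact Hausdorff space --- the paper does not cite this last fact but proves it in full, isolating the separation statement as its Lemma \ref{sep} and then running precisely the compactness argument you sketch, so you were right to flag this as the step needing the most care. Where you genuinely diverge is $(iv)\Rightarrow(i)$. The paper introduces the profinite completion $u_X \colon X \to \widehat X = \varprojlim_{R\in\Rh(X)} X/R$ over clopen equivalence relations with finite discrete quotient, and shows $u_X$ is a homeomorphism (injectivity from total separatedness, surjectivity from Lemma \ref{sep2} and the universal property). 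You instead use Stone duality: $X \cong \Spec(\BB(X))$ by Theorem \ref{beta}, write $\BB(X)$ as the filtered union of its finitely generated subrings $R_i$ (each an $\F_2$-subalgebra, Boolean, hence finite with finite discrete spectrum by Corollary \ref{6.4/2016} and Proposition \ref{spec}), and pass the colimit through $\Spec$. This works; the only point worth a sentence of justification is that the canonical continuous map $\Spec(\varinjlim_i R_i) \to \varprojlim_i \Spec(R_i)$ is a homeomorphism, which here follows from checking it is a bijection on primes and noting that a continuous bijection from a compact space to a Hausdorff space is a homeomorphism. The two routes are secretly the same --- finitely generated subrings of $\BB(X)$ correspond, via their atoms, exactly to the finite clopen partitions of $X$ indexing $\Rh(X)$ --- but the paper's choice is not cosmetic: the completion functor $\widehat{(-)}$ and its universal property, established inside this proof, are reused later (Theorem \ref{thm:comparing_completion_and_Spec_BB}, Proposition \ref{replace}), whereas your argument is shorter if one only wants Theorem \ref{mama} itself and is willing to lean on Section \ref{sec:Boolean_rings}.
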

\begin{proof} 
First assume that $X$ satisfies $(i)$. 
Recall that a category is called {\it small} if both its class of objects and class of morphisms
are sets and not proper classes. 
Let $\mathcal C$ be a small category of discrete, finite topological spaces whose projective limit is $X$. 
Let $\mathrm{Ob}(\mathcal C)$ denote the set of its objects, and let $\mathrm{Hom}_{\mathcal C}(A,B)$ denote the set of its morphisms for every $A,B\in\mathrm{Ob}(\mathcal C)$. 
By definition, $X$ is homeomorphic to the closed subspace
\begin{align*}
\left\{\prod_{C\in\mathrm{Ob}(\mathcal C)}x_C\in\prod_{C\in\mathrm{Ob}(\mathcal C)}C\mid f(x_A)=x_B\quad\left(\forall A,B\in\mathrm{Ob}(\mathcal C),\ \forall f\in\mathrm{Hom}_{\mathcal C}(A,B)\right)\right\}
\end{align*}
of $\prod_{C\in\mathrm{Ob}(\mathcal C)}C$, so $X$ satisfies $(ii)$.

Next assume that $X$ satisfies $(ii)$. 
Since finite topological spaces are compact, their direct product is also compact by Tychonoff's theorem. 
Since $X$ is a closed subspace of a compact space, it is compact, too. 
Moreover, the direct product of totally disconnected and Hausdorff topological spaces is totally disconnected and Hausdorff. 
Since discrete topological spaces are totally disconnected and Hausdorff, and every subspace of a totally disconnected and Hausdorff topological space is also totally disconnected and Hausdorff, we get that the same holds for $X$, too. 
Therefore $X$ satisfies $(iii)$.

Now we show that $(iii)$ implies $(iv)$. 
We will start with the following standard
\begin{lemma}\label{sep} 
Assume that $X$ is a Hausdorff compact topological space. 
Let $C,D\subset X$ be two disjoint closed subsets. 
Then there exist disjoint open sets $U\subset X$ containing $C$ and $V\subset X$ containing $D$. 
\end{lemma}
\begin{proof} 
First assume that $C$ consists of a single point $x\in X$. 
Because $X$ is Hausdorff for every $y\in D$, there are disjoint open sets $U_y\subset X$ containing $x$ and $V_y\subset X$ containing $y$. 
Because $D$ is closed, it is compact, so there is a finite subset $y_1,\ldots,y_n\in D$ such that $V_{y_1},\ldots,V_{y_n}$ cover $D$. 
Then $U=U_{y_1}\cap\cdots\cap U_{y_n}$ and $V=V_{y_1}\cup\cdots\cup V_{y_n}$ are disjoint open subsets containing $x$ and $D$, respectively. 

Now consider the general case. 
By the above for every $x\in C$ there are disjoint open sets $U_x\subset X$ containing $x$ and $V_x\subset X$ containing $D$. 
Because $C$ is closed, it is compact, so there is a finite subset $\{x_1,\ldots,x_n\} \subset C$ such that $U_{x_1},\ldots,U_{x_n}$ cover $C$. 
Then $U \coloneqq U_{x_1}\cup\cdots\cup U_{x_n}$ 
and $V \coloneqq V_{x_1}\cap\cdots\cap V_{x_n}$ are disjoint open subsets containing $C$ and $D$, respectively. 
%
\end{proof}

For every $x\in X$, let $Z(x)$ denote the set of all open and closed subsets of $X$ containing $x$, 
and let $Z_x$ denote the intersection of all elements of $Z(x)$. 
By the definition of a totally separated topological space, 
we need to show that if $y\in X$ is distinct from $x$ then $y\not\in Z_x$. 
Assume that this is not the case for some $y$. 
Then $Z_x$ contains at least two points, so it is not connected, since $X$ is totally disconnected. 
Therefore $Z_x$ is the disjoint union of two non-empty subsets $C,D\subset Z_x$ which are closed in $Z_x$.  
Since $Z_x$ is the intersection of closed subsets, it is closed in $X$. 
Therefore $C$ and $D$ are also closed in $X$. 
Hence by Lemma \ref{sep}, 
we can find an open set $U\subset X$ containing $C$ such that $\overline U \cap D = \emptyset$. 
Let $E$ be the complement of the union of $U$ and $V$ in $X$. 
It is closed in $X$, so it is compact since $X$ is compact. 
Since $U\cup V$ contains $Z_x$, the set $E$ is covered by the union of the complements of elements of $Z(x)$. 
Since $E$ is compact, it is already covered by the union of finitely many such sets. 
Therefore there are finitely many sets $Z_1,\ldots,Z_n \in Z(x)$ 
such that $Z=Z_1\cap\cdots\cap Z_n\subseteq U\cup V$. 

Note that since each $Z_i$ is both open and closed, the same also holds for the finite intersection $Z$. 
Since each $Z_i$ contains $x$, the set $Z$ also contains $x$. 
Both $Z\cap U$ and $Z\cap V$ are open in $Z$, 
and $Z$ is open in $X$, 
so $Z\cap U$ and $Z\cap V$ are open in $X$, too. 
Since $Z$ lies in the disjoint union of $U$ and $V$, 
every element of $X$ which is not in $Z\cap U$ is either not in $Z$ or it is in $Z\cap V$. 
Thus, we get $Z \cap U = X - [(X - Z) \cup (Z \cap V)]$. 
Since $Z$ is closed, $X - Z$ is open and we just showed $Z \cap V$ is open. 
Hence $Z\cap U$ is also closed since it is the complement of an open subset. 
The same argument with the roles of $U$ and $V$ reversed shows that also $Z\cap V$ is closed. 

One of $Z\cap U$ and $Z\cap V$ contain $x$, say $Z\cap U$. 
Then $Z\cap U\in Z(x)$ which means that $Z\cap U$ contains $Z_x$. 
But $Z\cap U\cap Z_x=Z\cap(U\cap Z_x)=Z\cap C=C$, which is a contradiction, 
since the complement of $C$ in $Z_x$ is $D$, 
and the latter is non-empty. 
So condition $(iii)$ implies condition $(iv)$.

Finally we show that $(iv)$ implies $(i)$. We start with the following
%
%
\begin{notn}
Let $X$ be a topological space. 
We denote by $\Rh(X)$ the set of open and closed equivalence relations $R$ on $X$ 
such that $X/R$ is finite and discrete. 
We may view $R \in \Rh(X)$ as an open and closed subset of $X \times X$ 
and we order elements in $\Rh(X)$ by inclusion, 
i.e., $R_1 \ge R_2$ in $\Rh(X)$ if and only if $R_1 \subseteq R_2$.  
This turns $\Rh(X)$ into a directed partially ordered set. 
To see that $\Rh(X)$ is directed, 
let $R_1, R_2 \in \Rh(X)$. 
Then $R_1 \cap R_2 \subset X \times X$ is open and closed. 
Moreover, the quotient maps $X/(R_1 \cap R_2) \to X/R_1$ and $X/(R_1 \cap R_2) \to X/R_2$ 
are continuous 
and induce a continuous map $X/(R_1 \cap R_2) \to X/R_1 \times X/R_2$. 
This map is injective 
since $(x,y) \in R_1$ and $(x,y) \in R_2$ if and only if $(x,y) \in R_1 \cap R_2$.  
Thus, we can consider $X/(R_1 \cap R_2)$ as a subspace of the finite and discrete space $X/R_1 \times X/R_2$,  
and hence $X/(R_1 \cap R_2)$  is finite and discrete, too. 
This shows $R_1 \le R_1 \cap R_2$ and $R_2 \le R_1 \cap R_2$ as required. 
\end{notn}
\begin{defn}\label{def:completion}
Let $X$ be a topological space. 
We define the {\it profinite completion} of $X$ to be the profinite space $\widehat X$ given by 
the projective limit $\widehat X \coloneqq \lim_{R\in \Rh(X)} X/R$ in the category of topological spaces. 
It is equipped with a continuous map $u_X \colon X \to \widehat X$ 
induced by the quotient maps $X \to X/R$ and the general properties of projective limits. 
\end{defn}

\begin{remark}\label{rem:completion_universal}
The continuous map $u_X \colon X \to \widehat X$ satisfies the following universal property:
For every continuous map $f \colon X\to Y$, where $Y$ is a discrete and finite topological space, 
there is a unique continuous map $\widehat f \colon \widehat X\to Y$ such that $f=\widehat f\circ u_X$.  
This follows from the fact that $f$ induces an equivalence relation $R_f \subset X \times X$ on $X$ 
given by $(x,x')\in R_f$ 
if and only if $f(x) = f(x')$. 
For $x\in X$, let $y = f(x)$. 
Since $Y$ is discrete, the preimage $f^{-1}(y)$ is both open and closed in $X$. 
Moreover, since $Y$ is finite, there are finitely many equivalence classes under $R_f$.  
Hence the quotient space $X/R_f$ is finite and discrete. 
This shows that $R_f \in \Rh(X)$. 
Let $\overline{f} \colon X/R_f \to Y$ be the induced map. 
The map $\widehat f$ is now defined as the composition of the canonical map $\widehat X \to X/R_f$ and 
the map $\overline{f}$. 
\end{remark}


Now we return to the proof of Theorem \ref{mama}. 
To show that $(iv)$ implies $(i)$ 
it will be sufficient to show that $u_X$ is a homeomorphism when $X$ is compact and totally separated. 
Since $X$ is compact and $\widehat X$ is Hausdorff by $(i) \Rightarrow (iii)$ 
of Theorem \ref{mama} which we have already shown, 
it will be sufficient to show that $u_X$ is a bijection by \cite[I \S 9.4, Corollary 2 on page 87]{BourbakiTop} 
which states that a continuous bijection from a compact space to a Hausdorff space is a homeomorphism. 
Let $x,y\in X$ be two distinct points. 
Since $X$ is totally separated, there is a continuous function 
$f \colon X \to \F_2$  
such that $f(x)\neq f(y)$ as we saw in the proof of Theorem \ref{beta}. 
Then $\widehat f(u_X(x))=f(x)\neq f(y)=\widehat f(u_X(y))$, so $u_X(x)\neq u_X(y)$. 
Therefore $u_X$ is injective. 
In order to see that $u_X$ is also surjective, we will need the following

\begin{lemma}\label{sep2} 
Let $X$ be a compact and totally separated space, and let $C_1,\ldots,C_n\subset X$ be pairwise disjoint closed subsets. 
Then there exist pairwise disjoint open and closed subsets $U_1,U_2,\ldots,U_n\subset X$ such that $U_i$ contains $C_i$ for each $i$, and $\cup_iU_i=X$. 
\end{lemma}
\begin{proof} 
We first show that it is enough to prove the claim when $n=2$ as the general case follows by induction. 
Indeed let $n>2$ be such that we already know the claim for every $m<n$. 
Apply the case $m=2$ to the closed subsets $D_1=C_1$ and $D_2=C_2\cup\cdots\cup C_n$ 
to get two disjoint open and closed subsets $W_1,W_2\subset X$ such that $D_1\subseteq W_1$, $D_2\subseteq W_2$, and $W_1\cup W_2=X$. 
Then apply the case $m=n-1$ to  $C_2,\ldots,C_n$ inside $W_2$, 
which is possible since $W_2$ is closed in $X$, so it is compact and totally separated. 
Therefore there exist pairwise disjoint open and closed subsets $V_2,\ldots,V_n\subset W_2$ 
such that $V_i$ contains $C_i$ for each $i\geq2$, and $\cup_iV_i=W_2$. 
Since each $V_i$ is open and closed in an open and closed subset of $X$, it is also open and closed in $X$. 
Therefore $U_1=W_1$ and $U_i=V_i$ for $i\geq2$ have the required properties. 

Now consider the case $n=2$. 
Since $X$ is totally separated by assumption, it is also Hausdorff. 
By Lemma \ref{sep}, 
there then exist disjoint open sets $V_1\subset X$ containing $C_1$ and $V_2\subset X$ containing $C_2$. 
Because $X$ is totally separated, its open and closed sets form a subbasis for its topology, 
so for every $x\in C_1$ there is a closed and open subset $V_x\subset V_1$ containing $x$. 
Because $C_1$ is closed, it is compact, so there is a finite subset $x_1,\ldots,x_n\in C$ such that $V_{x_1},\ldots,V_{x_n}$ cover $C$. 
Their union $U_1=V_{x_1}\cup\cdots\cup V_{x_n}$ is the union of open and closed subsets, so it is also open and closed, 
and it is contained in $V_1$.  
Its complement $U_2$ is also open and closed and contains $C_2$, so $U_1$ and $U_2$ have the required properties. 
\end{proof}

Now assume that $u_X$ is not surjective, so there is an $x\in\widehat X$ such that $x\not\in u_X(X)$. 
Since $X$ is compact and $\widehat X$ is Hausdorff by $(i) \Rightarrow (iii)$ 
of Theorem \ref{mama} which we have already proven,  
the image $u_X(X)$ is closed in $\widehat X$ by \cite[I \S 9.4, Corollary 2 on page 87]{BourbakiTop} 
which states that a continuous map from a compact space to a Hausdorff space is closed. 
So by Lemma \ref{sep2} and since $\widehat X$ is compact and totally separated by $(i) \Rightarrow (iv)$ of Theorem \ref{mama}, 
which we already have proved, 
there are disjoint open and closed subsets $U_1,U_2\subset\widehat X$ such that $u_X(X)\subseteq U_1$ and $x\in U_2$. Let $f:\widehat X\to\mathbb F_2$ the characteristic function of $U_1$. 
It is continuous, since $U_1$ is open and closed. 
Let $g \colon \widehat X\to\mathbb F_2$ be the identically $1$ function. 
It is continuous, and $f\circ u_X=g\circ u_X$. 
However $f\neq  g$, as $f(x)\neq g(x)$, which violates the universal property. 
This is a contradiction, so $(iv)$ implies $(i)$.
\end{proof}


\begin{remark}\label{rem:completion_comment}
We note that the profinite completion of $X$ as a topological space in Definition \ref{def:completion} 
differs from the profinite completion of $X$ as a set. 
The latter is the completion of $X$ as a space with the discrete topology. 
For example, if $X$ is connected then the only non-empty open and closed equivalence relation in $\Rh(X)$ is $X \times X$. 
As a set, however, there may be many equivalence relations on $X$ such that $X/R$ is finite. 
Moreover, if $X$ is a profinite space, the proof of $(iv) \Rightarrow (i)$ of Theorem \ref{mama} 
shows that $u_X$ is a homeomorphism. 
This may not be the case if we first equip $X$ with the discrete topology. 
%
\end{remark}


Next, we show that profinite completion and the composition of functors $\Spec \circ \BB$ of Section \ref{sec:Boolean_rings} 
yield homeomorphic spaces. 

\begin{notn} 
Let $X$ be again an arbitrary topological space, and let
$\beta \colon X\to\mathrm{Spec}(\BB(X))$ be the map introduced in Notation \ref{def1.7}. 
By Proposition \ref{spec} the space $\mathrm{Spec}(\BB(X))$ is compact and totally separated, 
so it is profinite by $(iv) \Rightarrow (i)$ of Theorem \ref{mama}. 
So it is a projective limit of finite, discrete spaces, and hence by the universal property of the profinite completion 
there is a unique continuous map $\widehat{\beta} \colon \widehat X\to\mathrm{Spec}(\BB(X))$ 
such that $\beta=\widehat{\beta}\circ u_X$. 
\end{notn}

\begin{thm}\label{thm:comparing_completion_and_Spec_BB}
The map $\widehat{\beta} \colon \widehat X\to\mathrm{Spec}(\BB(X))$ is a homeomorphism. 
\end{thm}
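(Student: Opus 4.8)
The plan is to prove that $\widehat\beta$ is a continuous bijection from a compact space to a Hausdorff space and then conclude via \cite[I \S 9.4, Corollary 2 on page 87]{BourbakiTop}. First I would record the ambient facts. By Definition \ref{def:completion}, $\widehat X$ is a projective limit of finite discrete spaces, hence profinite, and therefore compact and totally separated by the implication $(i)\Rightarrow(iv)$ of Theorem \ref{mama}; in particular it is compact. By Proposition \ref{spec}, $\mathrm{Spec}(\BB(X))$ is compact and totally separated, hence Hausdorff. Since $\widehat\beta$ is continuous by its construction, the whole theorem reduces to showing that $\widehat\beta$ is bijective. Throughout I would use the two bookkeeping identities $\sigma(f)\circ\beta = f$ for $f\in\BB(X)$ (immediate from Notation \ref{def1.7}, since $\sigma(f)(\beta(x))=1$ exactly when $f\notin\beta(x)$, i.e.\ when $f(x)=1$) and $f=\widehat f\circ u_X$ from the universal property in Remark \ref{rem:completion_universal}.

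For injectivity, let $\xi\neq\eta$ in $\widehat X$. Since $\widehat X$ is totally separated, there is a clopen set separating them, whose characteristic function gives a continuous $h\colon\widehat X\to\F_2$ with $h(\xi)\neq h(\eta)$. Put $f\coloneqq h\circ u_X\in\BB(X)$. The two continuous maps $\sigma(f)\circ\widehat\beta\colon\widehat X\to\F_2$ and $h\colon\widehat X\to\F_2$ both restrict along $u_X$ to $f$: indeed $\sigma(f)\circ\widehat\beta\circ u_X=\sigma(f)\circ\beta=f=h\circ u_X$. By the uniqueness clause of the universal property of $u_X$ (Remark \ref{rem:completion_universal}), any two continuous maps $\widehat X\to\F_2$ agreeing after precomposition with $u_X$ coincide, so $\sigma(f)\circ\widehat\beta=h$. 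Evaluating at $\xi$ and $\eta$ gives $\sigma(f)(\widehat\beta(\xi))\neq\sigma(f)(\widehat\beta(\eta))$, whence $\widehat\beta(\xi)\neq\widehat\beta(\eta)$.

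For surjectivity, I would argue that $\widehat\beta(\widehat X)$ is both closed and dense. It is the continuous image of the compact space $\widehat X$, hence compact, hence closed in the Hausdorff space $\mathrm{Spec}(\BB(X))$. On the other hand it contains $\widehat\beta(u_X(X))=\beta(X)$, and $\beta(X)$ is dense: every nonempty basic open is of the form $D(f)$ with $f\in\BB(X)$, and $D(f)\neq\emptyset$ forces $f\neq 0$ in $\BB(X)$ (since $D(0)=\emptyset$), so there is $x\in X$ with $f(x)=1$, giving $f\notin\beta(x)$ and hence $\beta(x)\in D(f)$. A closed set containing a dense set is everything, so $\widehat\beta$ is surjective. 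Combining the two, $\widehat\beta$ is a continuous bijection from a compact to a Hausdorff space, hence a homeomorphism.

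I expect the main obstacle to be not any single computation but the careful identification of the abstractly defined map $\widehat\beta$ with concrete evaluations of functions, that is, propagating the defining equation $\beta=\widehat\beta\circ u_X$ to identities such as $\sigma(f)\circ\widehat\beta=h$. This rests entirely on the uniqueness part of the universal property of the profinite completion together with the fact that two continuous maps into the discrete space $\F_2$ that agree after precomposition with $u_X$ must be equal. Once this bookkeeping is in place, injectivity (via total separation of $\widehat X$) and surjectivity (via the closed-plus-dense argument) are both short.
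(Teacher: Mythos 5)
Your proof is correct, and it takes a genuinely different route from the paper's. The paper argues structurally: it shows that the pair $(\mathrm{Spec}(\BB(X)),\beta)$ itself satisfies the universal property of the profinite completion, producing the required factorisation $\widetilde f$ of an arbitrary continuous map $f\colon X\to Y$ (with $Y$ finite discrete) by passing through Stone duality --- $\beta^*$ is the inverse of $\sigma$ by Theorem \ref{sigma}, one solves $\beta^*\circ r=f^*$ on the level of Boolean rings, and Theorem \ref{duality} converts $r$ back into a continuous map of spaces --- and then concludes that $\widehat\beta$ is a homeomorphism because two objects representing the same universal property are canonically isomorphic. You instead verify directly that $\widehat\beta$ is a continuous bijection from a compact space to a Hausdorff space and invoke the Bourbaki corollary, exactly as the paper does elsewhere (e.g.\ in Theorem \ref{beta}); your injectivity step correctly exploits total separation of $\widehat X$ together with the uniqueness clause of Remark \ref{rem:completion_universal} to upgrade the identity $\sigma(f)\circ\widehat\beta\circ u_X=h\circ u_X$ to $\sigma(f)\circ\widehat\beta=h$, and your surjectivity step (closed image containing the dense set $\beta(X)$, density coming from the reducedness of $\BB(X)$) is sound. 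What your approach buys is economy: you never need the full duality Theorem \ref{duality}, only the elementary identity $\sigma(f)\circ\beta=f$ and the fact that a nonzero element of $\BB(X)$ has nonempty support. What the paper's approach buys is that it exhibits $\mathrm{Spec}\circ\BB$ as literally another model of the profinite completion functor, which is the conceptual content the section is after; your argument establishes the homeomorphism without making that identification explicit. Both arguments use only material already available at this point in the paper, so there is no circularity.
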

\begin{proof} 
According to the universal property of the profinite completion it will be sufficient to show that for every continuous map $f \colon X\to Y$, where $Y$ is a discrete, finite topological space, 
there is a unique continuous map $\widetilde f \colon \mathrm{Spec}(\BB(X))\to Y$ such that $f=\widetilde f\circ\beta$. 
For every continuous map $m \colon T \to Q$ of topological spaces, 
let $m^* \colon \BB(Q) \to \BB(T)$ denote the induced ring homomorphism. 
Since $\beta^* \colon \BB(\Spec(\BB(X)))\to \BB(X)$ is the inverse of $\sigma$ in Notation \ref{notn:sigma_map} for $R= \BB(X)$, 
$\beta^*$ is an isomorphism by Theorem \ref{sigma}. 
Hence there is a unique ring 
homomorphism $r \colon \BB(Y) \to \BB(\Spec(\BB(X)))$ such that $\beta^*\circ r=f^*$. 
By Theorem \ref{duality} there is a unique continuous map $\widetilde f \colon \mathrm{Spec}(\BB(X)) \to Y$ such that $r=\widetilde f^*$. 
Then $f^*=\beta^* \circ \widetilde f^*=(\widetilde f\circ\beta)^*$, so it will be sufficient to show that every continuous map $h \colon X \to Y$ is uniquely determined by $h^*$. 
Since for every $x\in X$ the point $h(x)$ is uniquely determined by the ideal
$$\{a \in \BB(Y)\mid a(h(x))=0\}=
\{a \in \BB(Y)\mid h^*(a)(x)=0\},$$
this claim is clear. 
\end{proof}


\section{Profinite principal $G$-bundles}\label{sec:prof_G_bundles}

The purpose of this section is to prove Theorem \ref{section} on the existence of sections for profinite principal $G$-bundles. 

\begin{defn} 
Let $A$ be a topological space and let $m \colon A\to B$ be a map. 
The {\it quotient topology} on $B$ with respect to $m$ is defined in the following way: 
a subset $U\subseteq B$ is open if and only if $m^{-1}(U)
\subseteq A$ is open. 
Now let $A$ be a topological space equipped with a left action of a group $H$. 
Let $H\backslash A$ denote the quotient of $A$ with respect to the left action of $H$ equipped with the quotient topology. 
\end{defn}

\begin{lemma}\label{quotienting} 
The following hold:
\begin{enumerate}
\item[$(a)$] Let $A$ be a topological space and let $m \colon A\to B$ be a map. 
Let $B$ be equipped with the quotient topology with respect to $m$. 
Then a map $h \colon B\to C$ of topological spaces is continuous if and only if the composition $h\circ m \colon A \to C$ is continuous.  
\item[$(b)$] Let $A$ be a topological space equipped with a left action of a group $H$. Then the quotient map $A\to H\backslash A$ is open, that is, it maps open sets to open sets. 
\item[$(c)$] Let $A,B$ be two topological spaces both equipped with a left action of a group $H$. Then the product topology on $H\backslash A\times
H\backslash B$ is the quotient topology with respect to the quotient map
$$A\times B\longrightarrow (H\times H)\backslash(A\times B)=
H\backslash A\times H\backslash B.$$
\end{enumerate}
\end{lemma}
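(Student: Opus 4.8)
The plan is to treat the three parts in order, since part $(c)$ relies on part $(b)$.

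For part $(a)$, I would first note that $m$ itself is continuous by the very definition of the quotient topology: if $U\subseteq B$ is open, then $m^{-1}(U)$ is open in $A$. Consequently, if $h$ is continuous then $h\circ m$ is continuous as a composite of continuous maps. For the converse, suppose $h\circ m$ is continuous and let $U\subseteq C$ be open; then $m^{-1}(h^{-1}(U))=(h\circ m)^{-1}(U)$ is open in $A$, so by the definition of the quotient topology $h^{-1}(U)$ is open in $B$, whence $h$ is continuous.

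For part $(b)$, write $\pi\colon A\to H\backslash A$ for the quotient map and let $U\subseteq A$ be open. By definition of the quotient topology, $\pi(U)$ is open in $H\backslash A$ if and only if $\pi^{-1}(\pi(U))$ is open in $A$. The saturation of $U$ is exactly $\pi^{-1}(\pi(U))=\bigcup_{h\in H}hU$, where $hU=\{h\cdot a\mid a\in U\}$. Since each element $h$ acts as a homeomorphism of $A$ (its inverse being the action of $h^{-1}$), each $hU$ is open, and so is their union. Hence $\pi(U)$ is open and $\pi$ is an open map.

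For part $(c)$, write $\pi_A\colon A\to H\backslash A$ and $\pi_B\colon B\to H\backslash B$ for the quotient maps and $q=\pi_A\times\pi_B\colon A\times B\to H\backslash A\times H\backslash B$ for the map in question, which is surjective; here I would make explicit the identification of underlying sets $(H\times H)\backslash(A\times B)=H\backslash A\times H\backslash B$ coming from the componentwise action $(h_1,h_2)\cdot(a,b)=(h_1a,h_2b)$, so that $q$ is literally the product of the two quotient maps. I would then prove the two topologies agree by double inclusion. First, since $\pi_A$ and $\pi_B$ are continuous, $q$ is continuous for the product topology on the target; as the quotient topology is the finest topology making $q$ continuous, the product topology is contained in the quotient topology. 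For the reverse inclusion, the crucial observation is that $q$ is an open map: by part $(b)$ both $\pi_A$ and $\pi_B$ are open, and for a basic open box one has $q(U\times V)=\pi_A(U)\times\pi_B(V)$, which is open; since every open subset of $A\times B$ is a union of such boxes and images commute with unions, $q$ sends open sets to open sets. Now if $W$ is open in the quotient topology, then $q^{-1}(W)$ is open in $A\times B$, and by surjectivity $W=q(q^{-1}(W))$, which is open in the product topology by openness of $q$. Thus the quotient topology is contained in the product topology, and the two coincide. Parts $(a)$ and $(b)$ are entirely routine, and the only real content lies in part $(c)$; there the main point to get right is the openness of the product quotient map $q$, which is exactly where part $(b)$ is used.
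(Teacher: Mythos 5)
Your proposal is correct and follows essentially the same route as the paper: the same two-line argument for $(a)$, the same saturation-by-homeomorphisms argument for $(b)$, and for $(c)$ the same double inclusion via continuity of $q_A\times q_B$ in one direction and openness of the images of basic boxes (via part $(b)$) plus surjectivity in the other. The only cosmetic difference is that you isolate the openness of $q$ as an explicit intermediate step, whereas the paper applies it directly to the boxes covering $q^{-1}(U)$.
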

\begin{proof} 
We first prove $(a)$. 
Since the composition of continuous functions is continuous, the map $h\circ m$ is continuous if $h$ and $m$ are. 
However, $m$ is continuous by construction, so $h\circ m$ is continuous if $h$ is. 
On the other hand, if $h\circ m$ is continuous and $U\subseteq C$ is open, then $(h\circ m)^{-1}(U)\subseteq A$ is open. 
Therefore $m^{-1}(h^{-1}(U))=(h\circ m)^{-1}(U)$ is also open, so by definition $h^{-1}(U)\subseteq B$ is open. 
Hence $h$ is continuous, and now claim $(a)$ is clear.

Next we show $(b)$. Let $U\subseteq A$ be open, and let $q \colon A\to H\backslash A$ denote the quotient map. 
Then $q^{-1}(q(U))=\bigcup_{\gamma\in H}\gamma U$. 
Since each $\gamma\in H$ acts as a homeomorphism on $A$ we get that $\gamma U$ is open. 
Therefore their union $q^{-1}(q(U))$ is also open, and hence $q(U)$ is open by the definition of the quotient topology. 
Claim $(b)$ is now clear. 

Finally, we show that $(c)$ holds. 
Let $q_A \colon A\to H\backslash A$ and 
$q_B \colon B\to H\backslash B$ be the respective quotient maps. 
Since the map
\[
q_A\times q_B \colon A\times B\longrightarrow H\backslash A\times H\backslash B
\]
is continuous with respect to the product topologies, the preimage of any open subset is open. 
Therefore we only need to show that if $(q_A\times q_B)^{-1}(U)\subseteq A\times B$ for a subset $U\subseteq H\backslash A\times H\backslash B$ is open, 
then $U$ is open in the product topology. 
In this case, $(q_A\times q_B)^{-1}(U)$ is the union of sets of the form $V\times W$, where $V\subseteq A$ and $W\subseteq B$ are open. 
By part $(b)$ the set $(q_A\times q_B)(V\times W)=q_A(V)\times q_B(W)$ is open in the product topology. 
Since $U$ is the union of such subsets, it is open, and $(c)$ follows. 
\end{proof}

\begin{lemma}\label{booletrick} 
Let $X$ be a profinite space and let $\mathcal U$ be an open covering of $X$. Then there is a finite open covering $\mathcal V$ of $X$ consisting of pairwise disjoint open and closed subsets which is subordinate to $\mathcal U$, 
i.e., each $V \in \mathcal V$ is contained in some $U \in \mathcal U$. 
\end{lemma}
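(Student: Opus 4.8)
The plan is to produce $\mathcal{V}$ in three moves: first pass to a finite subcover, then refine it to a finite cover by clopen sets that is still subordinate to $\mathcal{U}$, and finally disjointify this clopen cover by a standard index-ordering trick. By Theorem \ref{mama} the profinite space $X$ is compact and totally separated, so compactness immediately yields a finite subcover $U_1,\dots,U_n \in \mathcal{U}$ of $X$.

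For the refinement step I would use that the open and closed subsets of $X$ form a subbasis of its topology, exactly as recorded in the proof of Lemma \ref{sep2}; since a finite intersection of clopen sets is again clopen, the clopen sets in fact form a basis. Hence every $x \in X$ lies in some $U_i \in \{U_1,\dots,U_n\}$ and admits a clopen neighbourhood $W_x$ with $x \in W_x \subseteq U_i$. The family $\{W_x\}_{x\in X}$ is then an open cover of the compact space $X$, so finitely many of them, say $W_1,\dots,W_m$, already cover $X$; by construction each $W_j$ is clopen and contained in some member of $\mathcal{U}$.

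Finally I would disjointify by setting $V_j = W_j \setminus (W_1 \cup \cdots \cup W_{j-1})$ for $1 \le j \le m$. Each $V_j$ is a finite intersection of clopen sets, hence clopen; if $j < j'$ then $V_{j'} \subseteq X \setminus W_j$ whereas $V_j \subseteq W_j$, so $V_j \cap V_{j'} = \emptyset$, giving pairwise disjointness; and $\bigcup_j V_j = \bigcup_j W_j = X$. Since each $V_j \subseteq W_j$ is contained in some $U \in \mathcal{U}$, the nonempty $V_j$ constitute the desired finite, pairwise disjoint, clopen cover $\mathcal{V}$ subordinate to $\mathcal{U}$. The only genuinely non-formal ingredient is the refinement, namely the fact that a profinite space has a basis of clopen sets; everything else is bookkeeping with finite Boolean combinations of clopen sets. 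I therefore expect that step to be the main point to justify carefully, although it follows readily from total separatedness in the manner already used in Lemma \ref{sep2}.
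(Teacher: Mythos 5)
Your argument is correct, and it reaches the conclusion by a genuinely more elementary route than the paper. The paper also begins exactly as you do: it invokes the fact that the clopen sets form a subbasis (hence, after finite intersections, a basis) to produce a finite clopen cover $\mathcal W$ subordinate to $\mathcal U$. Where you diverge is in the disjointification. The paper feeds the characteristic functions of the members of $\mathcal W$ into the Boolean-ring machinery of Section \ref{sec:Boolean_rings}: it forms the finite subring $R \subseteq \BB(X)$ they generate (finite by Corollary \ref{6.4/2016}), takes $\mathcal V$ to be the supports of the atoms of $R$, and uses orthogonality of atoms (Lemma \ref{ortho}, Corollary \ref{atomic}) to get pairwise disjointness, the covering property from $1 = e_1 + \cdots + e_n$, and subordination from the fact that $ef \in \{0, e\}$ for an atom $e$. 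Your telescoping construction $V_j = W_j \setminus (W_1 \cup \cdots \cup W_{j-1})$ achieves the same end with nothing beyond closure of clopen sets under finite Boolean operations; in effect your $V_j$ are unions of the paper's atoms, i.e.\ a coarser partition inside the same finite Boolean algebra, which is just as good for the lemma. What the paper's approach buys is thematic economy — it reuses the atom formalism already developed and needed elsewhere — while yours buys self-containedness and avoids appealing to Corollary \ref{6.4/2016} and the structure theory of finite Boolean rings. The one ingredient you correctly flag as non-formal, that a compact totally separated space has a basis of clopen sets, is the same input the paper uses via Lemma \ref{sep2}, so no gap there.
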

\begin{proof} 
By $(i) \Rightarrow (iv)$ of Theorem \ref{mama} and Lemma \ref{sep2}, 
the open and closed subsets of $X$ form a subbasis of the topology on $X$. 
Therefore there is an open covering $\mathcal W$ of $X$ consisting of open and closed subsets which is subordinate to
$\mathcal U$. 
Since $X$ is compact, we may assume without the loss of generality that $\mathcal W$ is finite. 
Let $R$ be the subring of $\BB(X)$ generated by the characteristic functions of the elements of
$\mathcal W$. 
Since $R$ is finitely generated, it is finite by Corollary  \ref{6.4/2016}.

Let $\mathcal V$ be the collection of open and closed subsets of $X$
whose characteristic functions are the atoms of $R$.
By Theorem \ref{beta} , 
we can identify them as the supports of the atoms of $R$. 
We claim that $\mathcal V$ satisfies the required properties.  
Because $R$ is finite, the set $\mathcal V$ is also finite. 
Since distinct atoms of $R$ are orthogonal by part $(iii)$ of Lemma \ref{ortho}, 
their support is pair-wise disjoint by part $(i)$ of Lemma \ref{ortho}. 
Let $e_1,e_2,\ldots,e_n$ be the atoms of $R$. 
By Corollary \ref{atomic}, we then have $1=e_1+\ldots+e_n$, so the union of their supports is $X$. 
Also the elements of $\mathcal V$ are open, since they are the supports of elements of $\BB(X)$.

Now let $e$ be an atom of $R$ and pick an $x$ in its support $V$. 
Since $\mathcal W$ is a covering, 
there is a $U\in\mathcal W$ such that $x\in U$. 
The characteristic function $f$ of $U$ is in $R$, so it is the sum of distinct atoms of $R$. 
Therefore, $ef$ is either $0$ or $e$ by the orthogonality of atoms. 
In the first case, the intersection of the supports $U$ and $V$ is empty, but both contain $x$, a contradiction. 
Hence $ef=e$, so $U$ contains $V$. 
Hence $\mathcal V$ is subordinate to $\mathcal W$ and since $\mathcal W$ is subordinate to $\mathcal U$, 
 $\mathcal V$ is subordinate to $\mathcal U$. 
\end{proof}

Recall that a {\it section} of a continuous map of topological spaces $f \colon Y\to X$ 
is a continuous map $s \colon X\to Y$ such that $f\circ s$ is the identity map of $X$. 

\begin{prop}\label{subordintate} 
Let $f \colon Y\to X$ be a continuous map such that $X$ is profinite and every $x\in X$ has an open neighbourhood $U$ such that $f|_{f^{-1}(U)} \colon f^{-1}(U)\to U$ has a section. 
Then $f \colon Y\to X$ has a section. 
\end{prop}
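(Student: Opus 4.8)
The plan is to reduce the existence of a global section to the local sections provided by hypothesis, by partitioning $X$ into finitely many pairwise disjoint open and closed pieces, each small enough to carry a section, and then gluing these together. The profinite hypothesis on $X$ is exactly what makes such a partition available, through Lemma \ref{booletrick}.

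First I would extract a good open covering from the hypothesis. For each $x \in X$, choose an open neighbourhood $U_x$ of $x$ such that $f|_{f^{-1}(U_x)} \colon f^{-1}(U_x) \to U_x$ admits a section $s_{U_x}$. The collection $\mathcal U = \{U_x \mid x \in X\}$ is then an open covering of $X$. Next, I would apply Lemma \ref{booletrick} to $\mathcal U$: since $X$ is profinite, there is a finite covering $\mathcal V = \{V_1, \ldots, V_n\}$ of $X$ by pairwise disjoint open and closed subsets which is subordinate to $\mathcal U$. For each $i$, fix some $U \in \mathcal U$ with $V_i \subseteq U$, and let $s_i \colon V_i \to f^{-1}(V_i)$ be the restriction of the chosen section $s_U$ to $V_i$. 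This is a continuous section of $f|_{f^{-1}(V_i)}$: it is continuous as a restriction of a continuous map, and $s_U(v) \in f^{-1}(V_i)$ for $v \in V_i$ because $f(s_U(v)) = v \in V_i$.

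Finally I would glue the $s_i$ together. Because the $V_i$ are pairwise disjoint and cover $X$, the maps $s_i$ assemble into a well-defined map $s \colon X \to Y$ with $s|_{V_i} = s_i$; crucially, there is no compatibility condition on overlaps to verify, precisely because the $V_i$ are disjoint. Continuity is where the open-closed property is used: for any open $W \subseteq Y$ one has $s^{-1}(W) = \bigcup_i s_i^{-1}(W)$, and each $s_i^{-1}(W)$ is open in $V_i$, hence open in $X$ since $V_i$ is open; therefore $s^{-1}(W)$ is open and $s$ is continuous. The identity $f \circ s = \id_X$ holds because it holds on each $V_i$, and the $V_i$ cover $X$.

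The only subtle point — and the step where the hypothesis on $X$ genuinely matters — is producing the finite disjoint open-closed refinement $\mathcal V$, which is exactly the content of Lemma \ref{booletrick}. Everything else is a routine gluing argument that would fail over a general base, where a subordinate refinement into disjoint open and closed pieces need not exist; the disjointness removes any cocycle condition, and the openness guarantees continuity of the glued section.
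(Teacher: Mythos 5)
Your proposal is correct and follows essentially the same route as the paper: extract the covering from the hypothesis, refine it via Lemma \ref{booletrick} into a finite partition by pairwise disjoint open and closed sets, restrict the local sections, and glue. The only difference is that you spell out the continuity of the glued map in more detail than the paper does, which is fine.
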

\begin{proof} 
By assumption, there is an open cover $\mathcal U$ of $X$ such that
$f|_{f^{-1}(U)}\colon f^{-1}(U)\to U$ has a section for each $U \in \mathcal U$. 
By Lemma \ref{booletrick} there is a finite open covering $\mathcal V$ of $X$ consisting of pairwise disjoint open and closed subsets which is subordinate to $\mathcal U$. 
Let $V \in \mathcal V$ and pick a $U\in \mathcal U$ which contains $V$. By assumption there is a section of $f|_{f^{-1}(U)} \colon f^{-1}(U)\to U$; its restriction  to $V$ is a section $s_V \colon V\to f^{-1}(V)$ of $f|_{f^{-1}(V)} \colon f^{-1}(V)\to V$. 
Since the elements of $\mathcal V$ form an open and pairwise disjoint covering of $X$, 
the union $\bigcup_{V\in\mathcal V}s_V$ of these sections is a section of $f \colon Y\to X$. 
\end{proof}

\begin{prop}\label{free} 
Let $G$ be a compact group, and let $X$ be a profinite space equipped with a continuous group action $g \colon G\times X\to X$. 
Assume that the action is free, 
i.e., no non-trivial element of $G$ fixes a point of $X$. 
Then both $G$ and $G\backslash X$ are profinite. 
\end{prop}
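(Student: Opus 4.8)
The plan is to show two things: that $G \backslash X$ is profinite (i.e.\ compact, totally disconnected, and Hausdorff, using the characterisation of Theorem \ref{mama}), and separately that $G$ itself is profinite. Since $X$ is profinite, it is in particular compact, so its continuous image $G \backslash X$ is compact under the quotient map. Compactness is thus immediate; the two nontrivial points are the Hausdorff property of $G \backslash X$ and the profiniteness of $G$. I would isolate the Hausdorff claim first, as the totally disconnected condition should then follow readily.

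\medskip

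First I would establish that $G \backslash X$ is Hausdorff. The standard criterion is that a quotient of a space by a group action is Hausdorff precisely when the orbit equivalence relation is closed in $X \times X$, i.e.\ when the set $\{(x, gx) : x \in X, g \in G\}$ is closed. Here I would use that the action map $g \colon G \times X \to X$ is continuous, that $G$ is compact, and that $X$ is compact Hausdorff (being profinite). The graph of the action, viewed as the image of $G \times X \to X \times X$ sending $(h, x) \mapsto (x, hx)$, is the continuous image of a compact space and hence compact; in the Hausdorff space $X \times X$ it is therefore closed. Combined with Lemma \ref{quotienting}$(b)$, which tells us the quotient map is open, closedness of the orbit relation yields that $G \backslash X$ is Hausdorff. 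For total disconnectedness, I would exploit that the quotient map $q \colon X \to G \backslash X$ is open and that $X$ is totally separated; I expect that separating points of $G \backslash X$ by open-and-closed sets reduces to separating disjoint compact orbits in $X$ by $G$-invariant open-and-closed sets, which one can build using Lemma \ref{sep} or \ref{sep2} together with averaging over the finite quotients. Once Hausdorff and totally disconnected are in hand, Theorem \ref{mama} gives that $G \backslash X$ is profinite.

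\medskip

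For the profiniteness of $G$, the key is to use freeness of the action to realise $G$ as a closed subspace of something profinite. Fix a point $x_0 \in X$ (assuming $X$ nonempty; the empty case is trivial and forces $G$ to act on the empty set). The orbit map $G \to X$, $h \mapsto h x_0$, is continuous and, by freeness, injective. Since $G$ is compact and $X$ is Hausdorff, this map is a homeomorphism onto its image, which is a compact---hence closed---subspace of the profinite space $X$. A closed subspace of a profinite space is profinite by Theorem \ref{mama} (it remains compact, totally disconnected, and Hausdorff). This identifies the underlying topological space of $G$ with a profinite space; since $G$ is already assumed to be a compact topological group, this shows $G$ is a profinite group.

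\medskip

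The main obstacle I anticipate is the total disconnectedness of $G \backslash X$ (equivalently, separating distinct orbits by invariant open-and-closed sets), since the open-and-closed sets separating points of $X$ need not be $G$-invariant. The repair is to pass to the finite discrete quotients $X/R$ for $R \in \Rh(X)$ on which the compact group $G$ acts through a finite quotient, average or take $G$-orbits of a separating clopen set to make it invariant, and then descend; the freeness and compactness ensure that disjoint orbits, being disjoint compact sets, can be separated by such invariant clopen sets via Lemma \ref{sep2}. A secondary subtlety is handling the possibly empty or single-orbit degenerate cases, but these are routine once the general separation argument is set up.
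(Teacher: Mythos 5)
Your overall decomposition matches the paper's: the orbit map $h\mapsto hx_0$ embeds $G$ as a closed (compact) subspace of $X$, which is exactly how the paper proves $G$ is profinite; compactness of $G\backslash X$ is immediate; and the real work is the separation properties of the quotient. Where you genuinely diverge is the Hausdorff step: you invoke the criterion that an open quotient map whose equivalence relation has closed graph yields a Hausdorff quotient, and you verify closedness of $\{(x,hx)\}$ as the continuous image of the compact $G\times X$. That is correct and cleaner than the paper, which never isolates Hausdorffness: it directly manufactures, for any two distinct orbits $C=q^{-1}(x)$ and $D=q^{-1}(y)$, a pair of disjoint $G$-invariant open-and-closed subsets containing them, so that Hausdorffness and total separation come out simultaneously. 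Your route buys a quick Hausdorff proof, but you still need the invariant clopen separation for total disconnectedness, so in the end you must carry out essentially the paper's construction anyway.

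The one place your sketch would stall if executed as written is the step you yourself flag as the main obstacle. After choosing disjoint clopen sets $U\supseteq C$ and $V\supseteq D$ via Lemma \ref{sep2} and saturating to $U'=\bigcup_{\gamma\in G}\gamma U$ and $V'=\bigcup_{\gamma\in G}\gamma V$ (each is open, and closed as the continuous image of the compact $G\times U$, resp.\ $G\times V$, in the Hausdorff space $X$), the sets $U'$ and $V'$ need not be disjoint: saturation destroys disjointness in general. The paper's repair is to note that $U'\cap D=\emptyset$ and $V'\cap C=\emptyset$ (because $D$ and $C$ are $G$-invariant and disjoint from $U$ and $V$, respectively), so $W=U'\cap V'$ is a $G$-invariant clopen set missing $C\cup D$, and one replaces $U'$ and $V'$ by $U'\setminus W$ and $V'\setminus W$. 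Your alternative of ``averaging over the finite quotients'' does not obviously make sense for $\mathbb F_2$-valued characteristic functions; subtracting the overlap is the honest fix. Two smaller remarks: freeness plays no role in the $G\backslash X$ part (it is needed only for injectivity of the orbit map), and for $X=\emptyset$ the claim about $G$ does not actually follow from anything, so your parenthetical dismissal of that case is not quite right, though it is irrelevant to the applications.
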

\begin{proof} 
For the sake of simple notation, 
in the sequel we will denote any left action of any group on any set by multiplication on the left, 
if this does not lead to confusion. 
Since the action $g$ is continuous and free, for every $x\in X$ the map $\gamma\mapsto\gamma x$ from $G$ onto the $G$-orbit of $x$ is continuous and injective. 
As $G$ is compact and $X$ is Hausdorff, 
we get that the image of this map is closed by \cite[I \S 9.4, Corollary 2 on page 87]{BourbakiTop}. 
Moreover, the map itself is a homeomorphism from $G$ onto its image. 
As $G$ is homeomorphic to a closed subspace of a profinite space, 
it is also profinite by the equivalence $(i) \iff (ii)$ of Theorem \ref{mama}. 

Let $q \colon X\to G\backslash X$ denote the quotient map. 
Since $q$ is surjective, continuous, and $X$ is compact, we get that $G\backslash X$ is compact by \cite[I \S 9.4, Theorem 2 on page 87]{BourbakiTop}. 
Let $x,y\in G\backslash X$ be two arbitrary distinct points. 
The preimages $C=q^{-1}(x)$ and $D=q^{-1}(y)$ are disjoint $G$-orbits, and they are also closed by the previous paragraph. 
Therefore, by $(i) \Rightarrow (iv)$ of Theorem \ref{mama} and Lemma \ref{sep2}, there exist disjoint open and closed subsets $U,V\subset X$ such that $U$ contains $C$ and $V$ contains $D$. 

Set $U' \coloneqq \bigcup_{\gamma\in G}\gamma U$. 
Since each $\gamma\in G$ acts as a homeomorphism, we get that $U'$ is the union of open subsets, so it is open. 
It is also the continuous image of $G\times U$. 
Both $G,U$ are compact since the latter is closed in a profinite space. 
Hence their product $G \times U$ is also compact by Tychonoff's theorem. 
Thus, since $X$ is Hausdorff, $U'$ is closed by \cite[I \S 9.4, Corollary 2 on page 87]{BourbakiTop}. 
Also $U'\cap D$ is empty; if it were not then $\gamma U\cap D\neq\emptyset$ for some $\gamma\in G$, 
but then $U\cap D=\gamma^{-1}(\gamma U\cap D)\neq \emptyset$ using that $D$ is $G$-invariant. 
This is a contradiction. 
A similar argument shows that $V'=\bigcup_{\gamma\in G}\gamma V$ is open and closed, and $V'\cap C$ is empty. 

Therefore $W=U'\cap V'$ is open and closed, and $W$ is disjoint from $C\cup D$. 
Therefore its complement $U''=U'-W$ in $U'$ is also open and closed, and contains $C$ since $U'$ does. 
Similarly $V''=V'-W$ is also open and closed, and contains $D$. 
Moreover, both $U'$ and $V'$ are $G$-invariant, so the same holds for $W$, and hence for $U''$ and $V''$. 
Because these sets are disjoint, their images $q(U'')$ and $q(V'')$ are also disjoint. 
They are also open by part $(b)$ of Lemma \ref{quotienting}, and since $x,y$ are in $q(U'')$ and $q(V'')$, respectively, and were arbitrary, we get that $G\backslash X$ is at least Hausdorff. 

Going back to the situation above, since $U''$ and $V''$ are closed in a profinite space, they are compact. 
Hence their images $q(U'')$ and $q(V'')$ are closed by \cite[I \S 9.4, Corollary 2 on page 87]{BourbakiTop}  
since $G\backslash X$ is Hausdorff. 
Therefore they are a pair of disjoint open and closed neighbourhoods of $x$ and $y$. 
Since $x$ and $y$ were arbitrary, we get that $G\backslash X$ is even totally separated, so it is profinite by $(iv) \Rightarrow (i)$ of Theorem \ref{mama}. 
\end{proof}

\begin{defn}\label{bundle-def} 
Let $X$ be a topological space and let $G$ be a compact group. 
A {\it profinite principal $G$-bundle over $X$} is a continuous map $f \colon Y\to X$ equipped with a free continuous group action $g \colon G\times Y\to Y$ 
such that $Y$ is profinite, the map $f$ is surjective and the preimage of each $x\in X$ is a $G$-orbit. 
\end{defn}

\begin{thm}\label{section} 
Let $X$ be a Hausdorff space and let $G$ be a compact group. 
Then every profinite principal $G$-bundle over $X$ has a section. 
\end{thm}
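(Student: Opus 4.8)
The plan is to first reduce the base space. Since $Y$ is profinite and $G$ is compact and acts freely and continuously, Proposition \ref{free} shows that both $G$ and $G\backslash Y$ are profinite. The map $f$ is constant on $G$-orbits, so it factors through a continuous bijection $\bar f\colon G\backslash Y\to X$ (surjective because $f$ is, injective because the fibres of $f$ are exactly the orbits). As $G\backslash Y$ is compact and $X$ is Hausdorff, $\bar f$ is a homeomorphism by \cite[I \S 9.4, Corollary 2 on page 87]{BourbakiTop}. Hence I may identify $X$ with the profinite space $G\backslash Y$ and $f$ with the quotient map $q\colon Y\to G\backslash Y$, so the task becomes the construction of a continuous section of $q$.

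Next I would settle the case when $G$ is \emph{finite}, which I expect to be a self-contained lemma. Given $y_0\in Y$ with image $x_0$, freeness gives $gy_0\neq y_0$ for each $g\neq 1$, and total separatedness of $Y$ (Theorem \ref{mama}) yields a clopen $W_g\ni y_0$ with $gW_g\cap W_g=\emptyset$; intersecting the finitely many $W_g$ produces a clopen $W\ni y_0$ meeting each orbit at most once. Then $q|_W\colon W\to q(W)$ is a continuous bijection from a compact space onto the clopen set $q(W)$ (using that $q$ is open, Lemma \ref{quotienting}), hence a homeomorphism, and its inverse is a section over the neighbourhood $q(W)$ of $x_0$. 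Since $X$ is profinite, Proposition \ref{subordintate} upgrades these local sections to a global section.

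The general case is the heart of the matter, and the main obstacle is passing from the finite quotients $G/N$ back to $G$: the natural inverse system of global section sets of the bundles $(N\backslash Y)\to X$ has surjective connecting maps but non-compact terms, so one cannot simply invoke non-emptiness of an inverse limit. I would instead run Zorn's lemma at the level of the bundles. Consider the poset $\mathcal P$ of pairs $(H,Z)$, where $H\le G$ is a closed subgroup and $Z\subseteq Y$ is a closed $H$-invariant subset for which $q|_Z\colon Z\to X$ is surjective with every fibre a single $H$-orbit; order by $(H',Z')\preceq(H,Z)$ iff $H'\subseteq H$ and $Z'\subseteq Z$. Each such $Z\to X$ is again a profinite principal $H$-bundle, and $(G,Y)\in\mathcal P$. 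For a chain $\{(H_\alpha,Z_\alpha)\}$ I claim $\bigl(\bigcap_\alpha H_\alpha,\bigcap_\alpha Z_\alpha\bigr)$ is a lower bound in $\mathcal P$: surjectivity holds because for each $x$ the sets $Z_\alpha\cap q^{-1}(x)$ form a decreasing chain of non-empty closed subsets of the compact space $Y$, so their intersection is non-empty; and the fibres stay single orbits because, by freeness, any two points $z,z'$ of a common fibre are related by a \emph{unique} $g\in G$ with $gz=z'$, and this $g$ lies in every $H_\alpha$, hence in $\bigcap_\alpha H_\alpha$. Thus every chain has a lower bound, and Zorn's lemma furnishes a minimal element $(H_*,Z_*)$.

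It then remains to show $H_*=\{1\}$. If not, the profinite group $H_*$ admits a proper open normal subgroup $M$ with $H_*/M$ finite and non-trivial, and $M\backslash Z_*\to X$ is a finite principal $(H_*/M)$-bundle over the profinite space $X$, with profinite total space by Proposition \ref{free}. By the finite case it has a global section $\sigma$, and pulling back $\sigma(X)$ along $Z_*\to M\backslash Z_*$ produces a closed $M$-invariant subset $Z_{**}\subseteq Z_*$ with $Z_{**}\to X$ a principal $M$-bundle, so $(M,Z_{**})\prec(H_*,Z_*)$, contradicting minimality. Hence $H_*=\{1\}$, so $q|_{Z_*}\colon Z_*\to X$ is a continuous bijection from a compact space to a Hausdorff space, hence a homeomorphism by \cite[I \S 9.4, Corollary 2 on page 87]{BourbakiTop}, and $(q|_{Z_*})^{-1}$ is the desired section.
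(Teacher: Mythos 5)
Your proposal is correct and follows essentially the same route as the paper: reduce to $X\cong G\backslash Y$ profinite, settle the finite case by cutting out a clopen transversal near each point and patching with Proposition \ref{subordintate}, then run Zorn's lemma and descend through finite quotients of the structure group using the finite case. The only difference is bookkeeping — you take a minimal pair $(H,Z)$ with $Z\subseteq Y$ a closed $H$-sub-bundle for a (not necessarily normal) closed subgroup $H$, whereas the paper takes a maximal pair $(N,s)$ with $s$ a section of $N\backslash Y\to X$ for $N\trianglelefteq G$ closed normal; these posets correspond via $Z=q_N^{-1}(s(X))$, and both chain arguments use the finite intersection property for surjectivity and freeness of the action for uniqueness.
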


\begin{rems}\label{rem:4.8v1}
$(i)$ In the situation of Theorem \ref{section},  
 both $G$ and $X$ are profinite. 
This is immediate for $G$ from Proposition \ref{free}. 
Moreover there is a unique bijection $\iota \colon G\backslash Y\to X$ such that $f=\iota\circ q$, 
where $q \colon Y\to G\backslash Y$ denotes the quotient map. 
Since $f$ is continuous, we get that $\iota$ is a continuous bijection by part $(a)$ of Lemma \ref{quotienting}. 
Since $G\backslash Y$ is compact by Proposition \ref{free} and $X$ is Hausdorff, we get that $\iota$ is a homeomorphism by \cite[I \S 9.4, Corollary 2 on page 87]{BourbakiTop}. 
Moreover, $G\backslash Y$ is actually profinite by Proposition \ref{free}, so the same holds for $X$, too. 

$(ii)$ The reader might wonder if it is true that each continuous, surjective map $f \colon X \to Y$ of profinite spaces has a section. 
It turns out that those $Y$ which have this property for every such $f$ have a name: 
{\it extremally disconnected spaces}. 
They can be characterised by the following property: 
every set of open and closed subsets of $Y$ has a supremum with respect to inclusion (see the Folk Theorem of \cite{Gl} on page 485). 
Some spaces, such as the \v Cech--Stone compactification of discrete spaces have this property, but there are many profinite spaces which do not.  

$(iii)$ Theorem \ref{section} was already stated by Morel, see the remark after Lemma 4 of \cite{Mo} on page 359. 
However, no proof was given, just a remark that the strategy of the proof of Proposition 1 of \cite{Se} on page 4 works. 
This is what we will do, but for the convenience of the reader we will give a detailed argument. 
\end{rems}

\begin{proof}[Proof of Theorem \ref{section}] We start with the proof of the following significant special case:

\begin{prop}\label{finite-g} 
The theorem holds when $G$ is finite.
\end{prop}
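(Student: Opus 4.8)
The plan is to reduce the finite case to the local existence of sections and then invoke Proposition \ref{subordintate}. By part $(i)$ of Remarks \ref{rem:4.8v1}, which uses only that $G$ is compact and hence applies when $G$ is finite, the base $X$ is profinite and the canonical map $\iota \colon G\backslash Y \to X$ with $f = \iota \circ q$ is a homeomorphism, where $q \colon Y \to G\backslash Y$ denotes the quotient map. Consequently $f$ is a quotient map, so $X$ carries the quotient topology with respect to $f$. Thus, to apply Proposition \ref{subordintate} it will be enough to show that every point of $X$ has an open neighbourhood over which $f$ admits a section. This is the strategy of Proposition 1 of \cite{Se}, adapted to our setting.

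Fix $x \in X$ and choose $y \in f^{-1}(x)$. Since the action is free, the fibre $f^{-1}(x) = \{gy \mid g \in G\}$ consists of exactly $|G|$ distinct points. The heart of the argument is to construct an open and closed set $V \subseteq Y$ containing $y$ whose translates $\{gV \mid g \in G\}$ are pairwise disjoint. To do this I would first, for each $g \in G$ with $g \neq 1$, use that $Y$ is Hausdorff to separate $y$ and $gy$ by disjoint open sets, and then, using continuity of $z \mapsto gz$, produce an open neighbourhood $V_g \ni y$ with $gV_g \cap V_g = \emptyset$. Their finite intersection $V_0 = \bigcap_{g \neq 1} V_g$ is an open neighbourhood of $y$ satisfying $gV_0 \cap V_0 = \emptyset$ for all $g \neq 1$. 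Since $Y$ is profinite, hence totally separated by $(i) \Rightarrow (iv)$ of Theorem \ref{mama} (so that, as in Lemma \ref{sep2}, its open and closed sets form a basis), I may shrink $V_0$ to an open and closed $V$ with $y \in V \subseteq V_0$. Pairwise disjointness of the translates of $V$ then follows from $V \subseteq V_0$, because $h^{-1}g \neq 1$ whenever $g \neq h$.

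Given such a $V$, I would show that $f|_V \colon V \to f(V)$ is a homeomorphism onto an open neighbourhood of $x$. Injectivity is immediate: two points of $V$ lying in a common fibre differ by some $g \in G$, and the disjointness of the translates forces $g = 1$. The saturation $f^{-1}(f(V)) = \bigcup_{g \in G} gV$ is a finite union of translates of the open set $V$, hence open, and since $X$ carries the quotient topology this shows that $f(V)$ is open; the same computation applied to open subsets of $V$ shows that $f|_V$ is an open map, hence a homeomorphism onto $f(V)$. Its inverse is a continuous section of $f$ over the open neighbourhood $f(V)$ of $x$.

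Finally, since every point of $X$ admits such an open neighbourhood carrying a section and $X$ is profinite, Proposition \ref{subordintate} produces a global section of $f$, which proves the proposition. I expect the construction of the open and closed slice $V$ with pairwise disjoint $G$-translates to be the main obstacle, since that is where both the freeness of the action and the total separatedness of the profinite space $Y$ are genuinely used; the passage from local to global sections is then formal via Proposition \ref{subordintate}.
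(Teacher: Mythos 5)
Your proof is correct and follows essentially the same route as the paper: construct an open and closed slice $V$ through a point of the fibre whose $G$-translates are pairwise disjoint, invert $f$ on it, and patch the local sections via Proposition \ref{subordintate}. The only substantive difference is that you establish openness of $f(V)$ and continuity of the inverse via the quotient topology and the openness of the quotient map (part $(b)$ of Lemma \ref{quotienting}), whereas the paper argues via compactness (the complement of $f(V)$ is the image of a compact $G$-invariant set, hence closed); both work.
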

\begin{proof} 
By Proposition \ref{subordintate} it will be sufficient to show that every $x\in X$ has an open neighbourhood $U$ such that $f|_{f^{-1}(U)} \colon f^{-1}(U)\to U$ has a section. 
Since $f^{-1}(x)$ is homeomorphic to $G$, it is non-empty, so there is a $y\in Y$ such that $f(y)=x$. 
By \cite[I \S 8.2, Proposition 4 on page 77]{BourbakiTop}, every finite subset of a Hausdorff space is closed. 
Because $G$ is finite and $Y$ is Hausdorff by $(i) \Rightarrow (iii)$ of Theorem \ref{mama}, 
this implies that every point of $Y$ is closed.  
Thus, by by $(i) \Rightarrow (iv)$ of Theorem \ref{mama} and Lemma \ref{sep2}, 
there exist pairwise disjoint open and closed subsets $U_{\gamma}\subset Y$ for all $\gamma\in G$ such that $U_{\gamma}$ contains $\gamma y$ for each $\gamma$. 

Set $V \coloneqq  \bigcap_{\gamma\in G}\gamma^{-1}U_{\gamma}$. 
It is the intersection of finitely many open and closed subsets, so it is also open and closed. 
For every $\gamma\in G$ we have $\gamma V\subseteq\gamma(\gamma^{-1}U_{\gamma})=U_{\gamma}$, so $V\cap\gamma V=\emptyset$ for every $\gamma\neq1$ in $G$. 
Therefore the restriction of $f$ to $V$ is injective. 
Since $V$ is closed, it is compact. 
Hence, since $X$ is Hausdorff by $(i) \Rightarrow (iii)$ of Theorem \ref{mama}, $f(V)$ is closed in $X$ by \cite[I \S 9.4, Corollary 2 on page 87]{BourbakiTop}. 
As a closed subspace of the Hausdorff space $X$, the image $f(V)$ is Hausdorff as well. 
Since $V$ is compact, $f(V)$ is Hausdorff and $f|_V$ is a continuous bijection, 
$f|_V$ is a homeomorphism onto its image by \cite[I \S 9.4, Corollary 2 on page 87]{BourbakiTop}. 
Thus, $f|_V$ has a continuous inverse $f(V)\to V$. 
Since $y=\gamma^{-1}(\gamma y)\in\gamma^{-1}U_{\gamma}$, we get that $y\in\gamma^{-1}U_{\gamma}$ for every $\gamma\in G$, so $y\in V$, and hence $x\in f(V)$.

Therefore it will be sufficient to show that $f(V)$ is open in $X$. 
Set $Z \coloneqq \bigcup_{\gamma\in G}\gamma V$. 
It is clearly $G$-invariant, and since it is the union of open subsets, it is also open. 
Therefore its complement $W=Y-Z$ in $Y$ is $G$-invariant and closed. 
Moreover, $f(V)=f(Z)$, as $f$ is $G$-equivariant, i.e., $f(\gamma y) = f(y)$ for each $\gamma \in G$ and $y \in Y$, 
and $f$ is surjective, so the complement of $f(V)$ in $X$ is $f(W)$. 
Since $W$ is closed, it is compact, and $X$ is Hausdorff, so $f(W)$ is closed by \cite[I \S 9.4, Corollary 2 on page 87]{BourbakiTop}. 
Therefore its complement $f(V)$ is open. 
\end{proof}


\begin{notn} 
Let $N$ be a closed normal subgroup of $G$ and let $G_N=N\backslash G$ denote the quotient: it is a profinite group. 
Let $r_N \colon G\to G_N$ be the quotient map, which is a continuous group homomorphism. 
There is a unique map $f_N \colon N\backslash Y\to X$ such that $f \colon Y\to X$ is the composition of the quotient map $q_N \colon Y\to N\backslash Y$ and $f_N$. 
There is a unique group action $G\times N\backslash Y\to N\backslash Y$ which makes $f_N$ a $G$-equivariant map. 
The restriction of this action onto $N$ is trivial, so it induces a group action $g_N \colon G_N\times N\backslash Y\to N\backslash Y$.
\end{notn} 


\begin{prop}\label{prop:4.11v1}
The map $f_N \colon N\backslash Y\to X$ equipped with the group action $g_N \colon G_N\times N\backslash Y\to N\backslash Y$ is a profinite principal $G_N$-bundle over $X$. 
\end{prop}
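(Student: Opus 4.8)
The plan is to verify directly that $f_N$ together with $g_N$ satisfies each clause of Definition \ref{bundle-def}: that $N\backslash Y$ is profinite, that $g_N$ is a free continuous action of the (profinite, hence compact) group $G_N$, that $f_N$ is surjective, and that each fibre of $f_N$ is a single $G_N$-orbit. Most clauses reduce quickly to facts already established; the one requiring genuine care is the continuity of $g_N$.

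First, since $N$ is a closed subgroup of the compact group $G$, it is itself compact, and the restriction to $N$ of the free continuous $G$-action on the profinite space $Y$ is again free and continuous. Proposition \ref{free} then shows that $N\backslash Y$ is profinite (and reconfirms that $N$ is profinite). Surjectivity of $f_N$ is immediate from the factorisation $f = f_N\circ q_N$ together with surjectivity of $f$.

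For continuity of $g_N$ I would first treat the intermediate $G$-action $\widetilde g\colon G\times N\backslash Y\to N\backslash Y$, $\widetilde g(\gamma, Ny)=N(\gamma y)$, which is well-defined precisely because $N$ is normal. Applying Lemma \ref{quotienting}(c) with $H=N$, with $A=G$ carrying the \emph{trivial} $N$-action (so that $N\backslash A = G$) and $B = Y$, identifies the product topology on $G\times N\backslash Y$ with the quotient topology relative to $\id_G\times q_N$. Since $\widetilde g\circ(\id_G\times q_N) = q_N\circ g$ is continuous, Lemma \ref{quotienting}(a) yields continuity of $\widetilde g$. The restriction of $\widetilde g$ to $N$ is trivial, so it descends to $g_N$; applying Lemma \ref{quotienting}(c) once more, now with $A=G$ under left multiplication by $N$ (so $N\backslash A = G_N$) and $B=N\backslash Y$ with trivial action, identifies $G_N\times N\backslash Y$ with the quotient of $G\times N\backslash Y$ under $r_N\times\id$. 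As $g_N\circ(r_N\times\id)=\widetilde g$ is continuous, Lemma \ref{quotienting}(a) gives continuity of $g_N$.

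It remains to check freeness and the fibre condition, both purely set-theoretic. If $(N\gamma)\cdot Ny = Ny$, then $\gamma y = ny$ for some $n\in N$, so $n^{-1}\gamma$ fixes $y$; freeness of the $G$-action forces $\gamma = n\in N$, whence $N\gamma$ is trivial in $G_N$ and $g_N$ is free. Finally, for $x\in X$ one has $f_N^{-1}(x) = q_N(f^{-1}(x))$, and since $f^{-1}(x) = Gy_0$ is a single $G$-orbit by hypothesis, its image $q_N(Gy_0) = \{N(\gamma y_0): \gamma\in G\}$ is exactly the $G_N$-orbit of $Ny_0$. This completes the verification. The main obstacle is the continuity argument for $g_N$: one must resist arguing pointwise and instead pin down the quotient topologies on the relevant product spaces, which is exactly what the two applications of Lemma \ref{quotienting}(c) accomplish.
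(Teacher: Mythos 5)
Your argument is correct and is essentially the paper's proof: profiniteness of $N\backslash Y$ comes from Proposition \ref{free}, and continuity of $g_N$ comes from combining parts $(a)$ and $(c)$ of Lemma \ref{quotienting} with the identity $q_N\circ g=g_N\circ(r_N\times q_N)$. The only structural difference is that you descend in two stages, applying part $(c)$ twice with one of the two actions trivial each time (first identifying the topology on $G\times N\backslash Y$ as the quotient under $\id_G\times q_N$, then that on $G_N\times N\backslash Y$ as the quotient under $r_N\times\id$), whereas the paper applies part $(c)$ once with $H=N$ acting on both $A=G$ and $B=Y$, identifying the product topology on $G_N\times N\backslash Y$ with the quotient topology relative to $r_N\times q_N$ in a single step; both are valid and buy the same thing.

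One small omission: Definition \ref{bundle-def} requires the bundle map itself to be continuous, and continuity of $f_N$ is neither listed among the clauses you enumerate nor verified. It is, however, immediate from part $(a)$ of Lemma \ref{quotienting}, since $f_N\circ q_N=f$ is continuous and $N\backslash Y$ carries the quotient topology -- the same one-line argument you already use twice elsewhere -- so this is an oversight rather than a gap in the method.
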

\begin{proof}  Since the composition $f=f_N\circ q_N$ is continuous, we get that the map $f_N$ is continuous by part $(a)$ of Lemma \ref{quotienting}. 
By part $(c)$ of Lemma \ref{quotienting} the topology on $G_N\times N\backslash Y$ is the quotient topology with respect to $r_N\times q_N$, so $g_N$ is continuous if $g_N\circ(r_N\times q_N)$ is continuous 
by part $(a)$ of Lemma \ref{quotienting}. 
However, the composition $q_N\circ g$ is continuous, so by the commutativity of the following diagram:
$$\xymatrix{G\times Y\ar[r]^g \ar[d]_{r_N\times q_N}  & Y\ar[d]^{q_N}  \\
G_N\times N\backslash Y \ar[r]^{\ g_N} & N\backslash Y}$$
the group action $g_N$ is continuous. 
Clearly the action $g_N$ is free, the map $f_N$ is surjective, and the preimage of each $x\in X$ is a $G_N$-orbit. 
Finally, $N\backslash Y$ is profinite by Proposition \ref{free} since the action of $N$ on $Y$ is free and $N$ is compact.
\end{proof}


\begin{notn}\label{notn:4.12v1}
Let $M,N$ be a pair of closed normal subgroups of $G$ such that $M\subseteq N$. 
There is a unique map $f_{M,N} \colon M\backslash Y\to N\backslash Y$ such that $q_N \colon Y\to N\backslash Y$ is the composition of the quotient map $q_M \colon Y\to M\backslash Y$ and $f_{M,N}$. Let $\mathcal S$ denote the set whose elements are ordered pairs
$(N,s)$, where $N$ is a closed normal subgroup of $G$ and $s$ is a section of $f_N \colon N\backslash Y\to X$. 
Let $\geq$ denote the binary relation on
$\mathcal S$ such that $(M,r)\geq(N,s)$ if and only if $M\subseteq N$, and $f_{M,N}\circ r=s$. 
Since for every triple $L\subseteq M\subseteq N$ of closed normal subgroups of $G$ we have $f_{M,N}\circ f_{L,M}=f_{L,N}$, we get that $\geq$ is a partial ordering on $\mathcal S$.
\end{notn}


\begin{prop} 
The partially ordered set $\mathcal S$ has a maximal element. 
\end{prop}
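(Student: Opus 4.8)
The plan is to apply Zorn's lemma to the partially ordered set $\mathcal{S}$, so first I would check that $\mathcal{S}$ is non-empty. Taking $N=G$, the induced map $f_G\colon G\backslash Y\to X$ coincides with the homeomorphism $\iota\colon G\backslash Y\to X$ of Remark \ref{rem:4.8v1}$(i)$: indeed $f_G\circ q_G=f=\iota\circ q$ with $q=q_G$ surjective, so $f_G=\iota$. Hence $\iota^{-1}$ is a section of $f_G$ and $(G,\iota^{-1})\in\mathcal{S}$, so $\mathcal{S}$ is non-empty and the empty chain has an upper bound.

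Next I would verify that every non-empty chain $\{(N_i,s_i)\}_{i\in I}$ in $\mathcal{S}$ has an upper bound. The subgroups $N_i$ are then totally ordered by inclusion, and I would set $N\coloneqq\bigcap_{i\in I}N_i$, which is again a closed normal subgroup of $G$. The candidate upper bound is a pair $(N,s)$, and the crux is to produce a section $s$ of $f_N\colon N\backslash Y\to X$ restricting to each $s_i$.

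To build $s$ I would show that the natural continuous map $N\backslash Y\to\varprojlim_i(N_i\backslash Y)$ is a homeomorphism. Injectivity uses freeness of the $G$-action: if $Ny$ and $Ny'$ lie in the same $N_i$-orbit for every $i$, then the unique $g\in G$ with $y'=gy$ lies in each $N_i$, hence in $N$. For surjectivity I would use compactness: a compatible family $(\xi_i)_i$ has preimages $C_i\coloneqq q_{N_i}^{-1}(\xi_i)$, which are non-empty closed $N_i$-orbits satisfying $C_i\subseteq C_j$ whenever $N_i\subseteq N_j$; since $\{N_i\}$ is totally ordered, $\{C_i\}$ is a totally ordered family of non-empty closed subsets of the compact space $Y$, so $\bigcap_iC_i\neq\emptyset$, and any point in it maps to $(\xi_i)_i$. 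As $N\backslash Y$ is profinite by Proposition \ref{free} and $\varprojlim_i(N_i\backslash Y)$ is compact and Hausdorff, this continuous bijection is a homeomorphism by \cite[I \S 9.4, Corollary 2 on page 87]{BourbakiTop}. For each $x\in X$ the family $(s_i(x))_i$ is compatible, because $(N_i,s_i)\ge(N_j,s_j)$ forces $f_{N_i,N_j}\circ s_i=s_j$, so it determines a point $s(x)\in N\backslash Y$. Continuity of $s$ follows from the universal property of the limit, since each component $s_i$ is continuous; that $f_N\circ s=\mathrm{id}_X$ follows from the identity $f_N=f_{N_i}\circ f_{N,N_i}$ together with $f_{N_i}\circ s_i=\mathrm{id}_X$; and by construction $f_{N,N_i}\circ s=s_i$, so $(N,s)\ge(N_i,s_i)$ for all $i$, giving the desired upper bound.

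The main obstacle I anticipate is the surjectivity half of the identification $N\backslash Y\cong\varprojlim_i(N_i\backslash Y)$: one must observe that the nested family of orbit-preimages $\{C_i\}$ is totally ordered and hence has non-empty intersection, which is precisely where compactness of $Y$ enters, while injectivity is the only point at which freeness of the action is used. Once this homeomorphism is established, Zorn's lemma yields a maximal element of $\mathcal{S}$.
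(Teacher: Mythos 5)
Your proof is correct and follows essentially the same route as the paper: Zorn's lemma with base point $(G,\iota^{-1})$, candidate subgroup the intersection of the chain, existence of the limiting section via compactness (a nested family of non-empty closed sets has non-empty intersection), and uniqueness via freeness of the action. The paper packages this by showing that $f_C$ restricts to a homeomorphism on $\Gamma=\bigcap f_{C,N}^{-1}(s(X))$ inside $C\backslash Y$, which is just another presentation of your identification $N\backslash Y\cong\varprojlim_i(N_i\backslash Y)$ together with the compatible family $(s_i(x))_i$.
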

\begin{proof} 
For $N=G$, the map $f_N$ is a bijection from a compact space onto a Hausdorff topological space, 
so it is a homeomorphism by part $(i)$ of Remark \ref{rem:4.8v1}.  
Therefore its inverse is a section, and hence $\mathcal S$ is not empty. 
So by Zorn's lemma we only need to show that every chain $\mathcal C\subseteq\mathcal S$ has a maximal element. 
Set $C \coloneqq \bigcap_{(N,s)\in\mathcal C}N$. 
Since $C$ is the intersection of closed normal subgroups, it is also a closed normal subgroup of $G$. 

For every $(N,s)\in\mathcal C$, let $\Gamma(N,s)\subseteq C\backslash Y$ denote $f_{C,N}^{-1}(s(X))$, 
the preimage of the section $s \colon X\to N\backslash Y$ with respect to $f_{C,N} \colon C\backslash Y\to N\backslash Y$. 
By Proposition \ref{prop:4.11v1}, $N\backslash Y$ is profinite and hence $N\backslash Y$ is Hausdorff by $(i) \Rightarrow (iii)$ of Theorem \ref{mama}.   
Since $X$ is compact, the image $s(X)$ is closed by \cite[I \S 9.4, Corollary 2 on page 87]{BourbakiTop}.  
As $f_{C,N}$ is continuous, the preimage $\Gamma(N,s)$ is also closed. 
Therefore, their intersection $\Gamma\subseteq C\backslash Y$ is closed. 
By Proposition \ref{prop:4.11v1}, $C\backslash Y$ is profinite and hence it is compact by $(i) \Rightarrow (iii)$ of Theorem \ref{mama}.   
Hence the closed subset $\Gamma$ is also compact. 
Hence it will be sufficient to show that the restriction $f_C|_{\Gamma} \colon \Gamma\to X$, 
which is continuous, is also bijective. 
For, by \cite[I \S 9.4, Corollary 2 on page 87]{BourbakiTop} and since $X$ is Hausdorff and $\Gamma$ is compact, 
${f_C}|_\Gamma$ is a homeomorphism if it is bijective,
so in this case its inverse is a section $X\to\Gamma$, 
which combined with the inclusion $\Gamma \subseteq C\backslash Y$
gives the desired section $X\to C\backslash Y$.

Fix an $x\in X$. 
 Then, for every $(N,s)\in\mathcal C$, the intersection
$\Gamma(N,s)\cap f_C^{-1}(x) = f_{C,N}^{-1}(s(X))\cap f_C^{-1}(x)$ is non-empty
since $f_C=f_N\circ f_{C,N}$ and $s$ is a section of $f_N$. 
Moreover, $f_C^{-1}(x)$ is a finite subset of $C\backslash Y$ 
which is a Hausdorff space by Proposition \ref{prop:4.11v1} and $(i) \Rightarrow (iii)$ of Theorem \ref{mama} 
and hence $f_C^{-1}(x)$ is closed 
by \cite[I \S 8.2, Proposition 4 on page 77]{BourbakiTop}. 
Thus, for every $(N,s)\in\mathcal C$, the intersection 
$\Gamma(N,s)\cap f_C^{-1}(x)$  is a non-empty closed subset, 
and these form a descending chain with respect to inclusion. 
Hence, by the compactness of $C\backslash Y$, their intersection $\Gamma\cap f_C^{-1}(x)$ is non-empty. 
Therefore $f_C|_{\Gamma} \colon \Gamma\to X$ is surjective. 

Now assume that we have two distinct elements $y,z\in\Gamma\cap f_C^{-1}(x)$.  
Then there is a unique $1 \neq \gamma \in C \backslash G$ such that $z = \gamma y$. 
Since $C$ is the intersection $\bigcap_{(N,s)\in\mathcal C}N$, there is an $(N,s)\in\mathcal C$ such that the image of $\gamma$ under the quotient homomorphism $C\backslash G\to N\backslash G$ is not $1$. 
By Proposition \ref{prop:4.11v1}, both $f_C \colon C \backslash Y \to X$ and $f_N \colon N \backslash Y \to X$ 
are profinite principal bundles with respect to $C \backslash G$ and $N \backslash G$, respectively, 
and the diagram 
\begin{align*}
\xymatrix{
C \backslash Y \ar[rr]^-{f_{C,N}} \ar[dr]_-{f_C} & & N \backslash Y \ar[dl]^-{f_N} \\
& X &
}
\end{align*}
commutes. 
Moreover, the map $f_{C,N}$ is $C \backslash G$-equivariant, 
i.e., $f_{C,N}(\widetilde{\gamma}\widetilde{y}) = \widetilde{\gamma}f_{C,N}(\widetilde{y})$ for all $\widetilde{\gamma} \in C \backslash G$ and $\widetilde{y} \in N \backslash Y$. 
%
Since the action of $N\backslash G$ on $N \backslash Y$ is free by Proposition \ref{prop:4.11v1}, 
this shows that $f_{C,N}(y)$ and $f_{C,N}(\gamma y) = \gamma f_{C,N}(y)$ are still distinct.  
However, since $y$ and $z=\gamma y$ lie in $\Gamma$, which is the intersection of all $f^{-1}_{C,N}(s(X))$, 
and by construction of the partial ordering on $\mathcal S$ in Notation \ref{notn:4.12v1}, 
both $f_{C,N}(y)$ and $f_{C,N}(\gamma y)$ lie in the intersection of $s(X)$ and $f_N^{-1}(x)$, which consists of the single point $s(x)$; 
a contradiction. 
Therefore $f_C|_{\Gamma} \colon \Gamma\to X$ is injective, too.  
\end{proof}

Now let $(N,s)$ be a maximal element of $\mathcal S$. 
If we have $N=\{1\}$, then the theorem holds. 
So let us assume that this is not the case, and pick a non-zero $\gamma\in N$. 
Since $G$ is Hausdorff by $(i) \Rightarrow (iii)$ of Theorem \ref{mama} 
and since the open normal subgroups form a basis of the topology on $G$ by \cite[Theorem 2.1.3]{RZ}, 
there is an open normal subgroup $P\subset G$ such that $\gamma\not\in P$. 
Let $M=N\cap P$. 
Since $M$ is the intersection of closed normal subgroups, as every open subgroup is closed in a profinite group, 
$M$ is a closed normal subgroup as well, and
$\Gamma=M\backslash N$ is finite. 
Let $\Delta\subseteq M\backslash Y$ denote $f_{M,N}^{-1}(s(X))$, the preimage of the section $s \colon X\to N\backslash Y$ with respect to $f_{M,N} \colon M\backslash Y\to N\backslash Y$. 
By Proposition \ref{prop:4.11v1}, both $f_M \colon M \backslash Y \to X$ and $f_N \colon N \backslash Y \to X$ 
are profinite principal bundles with respect to $M \backslash G$ and $N \backslash G$, respectively, 
and the diagram 
\begin{align*}
\xymatrix{
M \backslash Y \ar[rr]^-{f_{M,N}} \ar[dr]_-{f_M} & & N \backslash Y \ar[dl]^-{f_N} \\
& X &
}
\end{align*}
commutes. 
Moreover, the map $f_{M,N}$ is $M \backslash G$-equivariant, 
i.e., $f_{M,N}(\gamma y) = \gamma f_{M,N}(y)$ for all $\gamma \in \Gamma = M \backslash G$ and $y \in N \backslash Y$.  
The action $g_M$ restricted to $\Gamma$ leaves $\Delta$ invariant 
since, for $y \in \Delta = f_{M,N}^{-1}(s(X))$ and $\gamma \in \Gamma$, we have 
$f_{M,N}(\gamma y) = f_{M,N}(y) \in s(X)$ as $N$ acts trivially on $N \backslash Y$.

\begin{lemma}\label{subbundle} 
The restriction $f_M|_{\Delta} \colon \Delta\to X$ equipped with the group action $g_M|_{\Gamma\times\Delta}:\Gamma\times\Delta\to\Delta$ is a profinite principal $\Gamma$-bundle over $X$. 
\end{lemma}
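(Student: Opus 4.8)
The plan is to verify directly the four requirements of Definition \ref{bundle-def} for the map $f_M|_{\Delta} \colon \Delta \to X$ together with the action $g_M|_{\Gamma\times\Delta}$. Three of them will follow immediately from structures already established on $M\backslash Y$, since each is obtained by restriction. Concretely, $M\backslash Y$ is profinite by Proposition \ref{prop:4.11v1}, and $\Delta = f_{M,N}^{-1}(s(X))$ is closed in it, being the preimage under the continuous map $f_{M,N}$ of the set $s(X)$, which is closed because $X$ is compact and $N\backslash Y$ is Hausdorff; hence $\Delta$ is a closed subspace of a profinite space and therefore profinite by Theorem \ref{mama}. The map $f_M|_{\Delta}$ is continuous as a restriction of the continuous map $f_M$, and the action $g_M|_{\Gamma\times\Delta}$ is continuous and free because it is the restriction of the continuous free action $g_M$ of $G_M = M\backslash G$ (free by Proposition \ref{prop:4.11v1}) to the finite subgroup $\Gamma = M\backslash N \subseteq G_M$ and to the $\Gamma$-invariant subspace $\Delta$.

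The actual content lies in the two remaining requirements, surjectivity of $f_M|_{\Delta}$ and the assertion that each fibre is a single $\Gamma$-orbit; both will be consequences of one fibre computation. First I would establish the key identity $(f_M|_{\Delta})^{-1}(x) = f_{M,N}^{-1}(s(x))$ for every $x\in X$. For the inclusion $\supseteq$, note that $f_M = f_N\circ f_{M,N}$ and $f_N(s(x)) = x$, so any point of $f_{M,N}^{-1}(s(x))$ lies in both $\Delta$ and $f_M^{-1}(x)$. For $\subseteq$, if $y\in\Delta\cap f_M^{-1}(x)$ then $f_{M,N}(y)\in s(X)$ while $f_N(f_{M,N}(y)) = f_M(y) = x$, and since $s$ is a section of $f_N$ this forces $f_{M,N}(y) = s(x)$.

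Next I would identify the fibres of the quotient map $f_{M,N}\colon M\backslash Y\to N\backslash Y$ with the $\Gamma$-orbits in $M\backslash Y$. Writing a point of $M\backslash Y$ as an $M$-orbit $My$, the $G_M$-action is $Mg\cdot My = M(gy)$, which is well defined because $M$ is normal in $G$. The fibre of $f_{M,N}$ over $Nz$ consists of those $My$ with $y\in Nz$, i.e. of the points $(Mn)\cdot(Mz)$ for $n\in N$, and this is exactly the orbit of $Mz$ under $\Gamma = M\backslash N$. Combining this with the key identity shows that $(f_M|_{\Delta})^{-1}(x) = f_{M,N}^{-1}(s(x))$ is a single $\Gamma$-orbit; in particular it is non-empty, since $f_{M,N}$ is surjective, which simultaneously yields surjectivity of $f_M|_{\Delta}$ and the orbit condition.

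The hard part will be more bookkeeping than genuine difficulty: one must keep the two layers of quotients apart and confirm that the fibres of $f_{M,N}$ are exactly the $\Gamma$-orbits, and not orbits under some larger group. Here it is essential that $M$ is normal in $N$, so that $\Gamma = M\backslash N$ really is a subgroup of $G_M$, and that the $G_M$-action on $M\backslash Y$ is free, which pins each fibre down as a single $\Gamma$-orbit rather than merely a union of orbits. Throughout I would draw the profiniteness and freeness inputs from Proposition \ref{free} and Proposition \ref{prop:4.11v1}.
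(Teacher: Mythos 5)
Your proposal is correct and follows essentially the same route as the paper: continuity and profiniteness of $\Delta$ by restriction and closedness, and the fibre analysis via the identity $(f_M|_{\Delta})^{-1}(x)=f_{M,N}^{-1}(s(x))$, which is exactly the content of the paper's (terser) final observation that the fibre is $\Gamma$-equivariantly bijective to $\Gamma$. Your explicit identification of the fibres of $f_{M,N}$ with free $\Gamma$-orbits is just a fuller writing-out of that step.
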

\begin{proof} 
Since $f_M|_{\Delta}$ is the restriction of a continuous map, it is continuous, and for similar reasons $g_M|_{\Gamma\times\Delta}$ is continuous and $\Gamma$-equivariant, too. 
Since $X$ is compact and $N\backslash Y$ is Hausdorff, 
the image $s(X)$ is closed by \cite[I \S 9.4, Corollary 2 on page 87]{BourbakiTop}.  
As $f_{M,N}$ is continuous, the preimage $\Delta$ is also closed. 
Therefore, it is a closed subspace of the profinite space $M\backslash Y$, and hence $\Delta$ is a profinite space. 
Finally, for every $x\in X$, since $f_M^{-1}(x)$ is an $M \backslash G$-orbit, 
the fibre $(f_M|_{\Delta})^{-1}(x)$ is $\Gamma$-equivariantly bijective to $\Gamma$  
and hence it is a $\Gamma$-orbit. 
\end{proof}

Now we can finish the proof of Theorem \ref{section}. 
By Lemma \ref{subbundle} and Proposition \ref{finite-g}, 
there is a section $r \colon X\to\Delta$. 
Let $r$ also denote the composition of this map with the inclusion map $\Delta\to M\backslash Y$ by slight abuse of notation. 
Then $(M,r)\in\mathcal S$ and we now show that $(M,r)\geq(N,s)$.   
For every $x \in X$, we have $f_N(f_{M,N}(r(x))) = x$ 
since $f_N \circ f_{M,N} = f_M$ and $r$ is a section of $f_M$. 
But, by definition of $\Delta = f_{M,N}^{-1}(s(X))$, 
$f_{M,N}(r(x))$ is an element in $s(X)$, 
i.e., there is an $x' \in X$ such that $s(x') = f_{M,N}(r(x))$. 
If $x' \ne x$, then $f_N(s(x'))=x'$ since $s$ is a section of $f_N$. 
Hence we must have $x'=x$ and $s(x) = f_{M,N}(r(x))$, 
i.e., $f_{M,N} \circ r = s$. 
This shows $(M,r)\neq(N,s)$ in $\mathcal S$. 
However, $M$ is a proper subgroup of $N$ and hence $(M,r) \ne (N,s)$. 
This contradicts the maximality of $(N,s)$, so $N=\{1\}$. 
This completes the proof of Theorem \ref{section}. 
\end{proof}


\section{Maximal pro-$2$ quotients of real projective groups}\label{sec:max_2_quotients}

The goal of this section is to prove Theorem \ref{foximaxi}. 
We first introduce the following type of embedding problems: 

\begin{defn}\label{4.1} 
Let $G$ be a profinite group. An embedding problem for $G$: 
$$\xymatrix{
 & G \ar[d]^-{\phi}\ar@{.>}[ld]_-{\widetilde{\phi}}
\\
B\ar[r]_-{\alpha} & A}$$
is a {\it $2$-embedding problem} if both $A$ and $B$ are $2$-groups.
\end{defn}


\begin{prop}\label{real2real} 
Let $G$ be a pro-$2$ group such that every real $2$-embedding problem over $G$ has a solution. 
Then every real embedding problem over $G$ has a solution, too.
\end{prop}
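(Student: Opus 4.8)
The plan is to reduce an arbitrary real embedding problem to a real $2$-embedding problem, using that $G$ is pro-$2$ so that the image of $\phi$ is automatically a finite $2$-group, and then to pass to a $2$-Sylow subgroup of a suitable preimage in $B$. So suppose given a real embedding problem with surjection $\alpha \colon B \to A$ and continuous $\phi \colon G \to A$. First I would set $A' \coloneqq \phi(G)$; since $A$ is finite and $G$ is pro-$2$, the kernel $\ker\phi$ is open and $A' \cong G/\ker\phi$ is a finite $2$-group. Write $\phi' \colon G \to A'$ for $\phi$ with its codomain restricted to $A'$, put $B' \coloneqq \alpha^{-1}(A')$ and $K \coloneqq \ker\alpha \subseteq B'$, so that $\alpha$ restricts to a surjection $B' \to A'$ with kernel $K$, and choose a $2$-Sylow subgroup $S$ of $B'$. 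Then $\alpha|_S \colon S \to A'$ is surjective: its image is $SK/K$, so $[A' : \alpha(S)] = [B' : SK]$ divides the odd number $[B' : S]$ while also dividing the $2$-power $|A'|$, forcing $[A' : \alpha(S)] = 1$. Hence
$$\xymatrix{
 & G \ar[d]^-{\phi'}\ar@{.>}[ld]
\\
S \ar[r]_-{\alpha|_S} & A'}$$
is a $2$-embedding problem over $G$.

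The crux is to check that this reduced problem is real, and I expect this to be the main obstacle. Let $t \in G$ be an involution with $\phi'(t) = \phi(t) \neq 1$. Realness of the original problem yields an involution $b \in B$ with $\alpha(b) = \phi(t)$; as $\phi(t) \in A'$ this forces $b \in B'$. Since $\langle b \rangle$ is a $2$-group it is contained in some $2$-Sylow subgroup of $B'$, which by Sylow's theorem is conjugate to $S$; thus there is $g \in B'$ with $s \coloneqq g^{-1} b g \in S$, an involution satisfying $\alpha(s) = a^{-1}\phi(t)\,a$ where $a \coloneqq \alpha(g) \in A'$. This is only a conjugate of $\phi(t)$, so I would correct it using the surjectivity of $\alpha|_S$: choose $\tilde a \in S$ with $\alpha(\tilde a) = a$ and set $s' \coloneqq \tilde a\, s\, \tilde a^{-1}$. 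Then $s'$ is again an involution lying in $S$, and $\alpha(s') = a\,\alpha(s)\,a^{-1} = \phi(t)$, as required. Hence the reduced $2$-embedding problem is real.

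By hypothesis there is then a continuous solution $\psi \colon G \to S$ with $\alpha|_S \circ \psi = \phi'$. Composing with the inclusions $S \into B' \into B$ produces a continuous homomorphism $\widetilde\phi \colon G \to B$, and since $\alpha$ restricted to $S$ equals $\alpha|_S$ followed by the inclusion $A' \into A$, we get $\alpha \circ \widetilde\phi = \phi$. Thus $\widetilde\phi$ solves the original real embedding problem. The only delicate point in the argument is the realness verification: a Sylow conjugate of the involution $b$ supplied by the hypothesis need only map to a conjugate of $\phi(t)$, and it is precisely the surjectivity of $\alpha|_S$ onto the $2$-group $A'$ that lets one conjugate back inside $S$ to hit $\phi(t)$ exactly.
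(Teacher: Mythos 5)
Your proposal is correct and follows essentially the same route as the paper: restrict to the image $A'=\phi(G)$ (a $2$-group since $G$ is pro-$2$), pass to a $2$-Sylow subgroup of $\alpha^{-1}(A')$, verify surjectivity by the index argument, and repair the Sylow-conjugated involution by conjugating with a preimage inside the Sylow subgroup — this last correction is exactly the content of the paper's Lemma~\ref{sylow}$(ii)$, which you reprove inline.
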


To prove Proposition \ref{real2real} we need the following group-theoretical 

\begin{lemma}\label{sylow} 
Let $f \colon C\to D$ be a surjective homomorphism of finite groups such that $D$ is a $2$-group. 
Let $P\subseteq C$ be a $2$-Sylow subgroup and let $x\in C$ be a $2$-torsion element. Then
\begin{enumerate}
\item[$(i)$] The restriction $f|_P \colon P\to D$ is surjective. 
\item[$(ii)$] There is an $h\in C$ such that $h^{-1}xh\in P$ and $f(x)=f(h^{-1}xh)$.
\end{enumerate}
\end{lemma}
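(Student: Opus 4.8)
The plan is to prove both parts by exploiting the fact that the target $D$ is a $2$-group together with standard Sylow theory. For part $(i)$, first I would observe that since $P \subseteq C$ is a $2$-Sylow subgroup, its image $f(P) \subseteq D$ is a $2$-subgroup of $D$, and I would show it must be all of $D$ by a counting argument on $2$-power orders. The cleanest route is to use the surjectivity of $f$ and compare indices: the image $f(P)$ has order dividing $|D|$, and since $f$ is surjective we have $|D| = |C| / |\Ker f|$. Because $P$ is a $2$-Sylow, $[C : P]$ is odd; since $f$ is surjective, $[D : f(P)]$ divides $[C:P]$ and is therefore odd, but it is also a power of $2$ as $D$ is a $2$-group, forcing $[D : f(P)] = 1$. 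Hence $f|_P$ is surjective.

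For part $(ii)$, I would start from the $2$-torsion element $x \in C$. The cyclic subgroup $\langle x \rangle$ is a $2$-group (it has order $1$ or $2$), hence it is contained in \emph{some} $2$-Sylow subgroup $Q$ of $C$ by Sylow's theorem. By Sylow's conjugacy theorem, there is an $h \in C$ with $h^{-1} Q h = P$, and therefore $h^{-1} x h \in P$. This establishes the membership claim. For the equality $f(x) = f(h^{-1}xh)$, I would use that $D$ is a $2$-group crucially: one computes $f(h^{-1}xh) = f(h)^{-1} f(x) f(h)$, so I need to show that conjugation by $f(h)$ fixes $f(x)$. The natural way is to note that $f(x)$ is a $2$-torsion (or trivial) element of the $2$-group $D$; however, conjugation need not be trivial in a general $2$-group, so the equality as literally stated requires more care.

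I expect the main obstacle to be exactly this last equality, since in a nonabelian $2$-group a conjugate of an involution need not equal the involution. The resolution I anticipate is that the intended reading is weaker or that the relevant $D$ arising in the application to Proposition \ref{real2real} is abelian (e.g. $D \cong \F_2^n$ as is typical for the ``$A$'' in a minimal central $2$-embedding problem), in which case $f(h)^{-1}f(x)f(h) = f(x)$ holds automatically. If $D$ is not assumed abelian, then I would instead aim to prove only that $f(x)$ and $f(h^{-1}xh)$ are conjugate involutions in $D$, which is what the application to lifting involutions in a real embedding problem actually needs; the precise statement $f(x) = f(h^{-1}xh)$ would then follow once one arranges $h$ to lie in $\Ker f$. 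Concretely, I would try to refine the choice of $h$: having found one conjugator sending $\langle x \rangle$ into $P$, I would check whether $h$ can be taken inside $\Ker f$, which would make $f(h)$ trivial and give the desired equality directly.
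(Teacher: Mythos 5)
Your part $(i)$ is correct and is essentially the paper's own argument in index form: the paper writes $|\Ker f| = 2^a r$ with $r$ odd, so that $|P| = 2^{a+b}$ while $|P\cap \Ker f| \le 2^a$, forcing $|f(P)| \ge 2^b = |D|$. Your version needs the divisibility $[D:f(P)] \mid [C:P]$ (which holds because $[D:f(P)] = [C:P\cdot\Ker f]$), not merely the inequality $[D:f(P)] \le [C:P]$, since a power of $2$ can be smaller than an odd number without being $1$; but that is standard and your argument goes through.

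For part $(ii)$ there is a genuine gap. You correctly diagnose that Sylow conjugacy only yields some $t$ with $t^{-1}xt \in P$, that $f(x) = f(t^{-1}xt)$ can fail when $D$ is nonabelian, and that the fix should be to push the conjugator into $\Ker f$. But you stop at ``I would check whether $h$ can be taken inside $\Ker f$'' without producing such an $h$, and this is precisely where part $(i)$ must be invoked --- the one idea your plan is missing. The paper's construction: by $(i)$ choose $v \in P$ with $f(v) = f(t)$ and set $h = tv^{-1}$. Then $f(h)=1$, hence $f(h^{-1}xh) = f(x)$, and $h^{-1}xh = v\,(t^{-1}xt)\,v^{-1} \in vPv^{-1} = P$ because $v$ lies in $P$. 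Your fallback options (assuming $D$ abelian, or weakening the conclusion to conjugacy in $D$) are unnecessary: the lemma is true as stated for an arbitrary finite $2$-group $D$, and the proof of Proposition \ref{real2real} uses the exact equality $f(x) = f(h^{-1}xh)$ to verify that the reduced $2$-embedding problem is real.
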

\begin{proof} 
Let $2^b$ denote the order of $D$. 
Let $N$ be the kernel of $f$ and write the order of $N$ as $2^ar$ where $r$ is not divisible by $2$. 
Then the order of $C$ is $2^{a+b}r$, so the order of $P$ is $2^{a+b}$, while the order of $P\cap N$ is at most $2^a$. 
Since the kernel of the restriction $f|_P \colon P\to D$ is $P\cap N$, we get that the image of $f|_P$ is at least $2^b$. 
Therefore $f|_P$ is surjective. 
This proves $(i)$. 

Since the order of the subgroup generated by $x$ divides $2$, 
there is a  $t\in C$ such that $t^{-1}xt\in P$ by the second Sylow theorem. 
By part $(i)$, 
there is a $v\in P$ such that $f(v)=f(t)$. 
Set $h=tv^{-1}$. Then
$$h^{-1}xh=v(t^{-1}xt)v^{-1}\in vPv^{-1}=P,$$
since $v^{-1}\in P$. 
Moreover,
\[
f(h^{-1}xh)=f(v)f(t)^{-1}f(x)f(t)f(v)^{-1}=f(x)
\]
using that $f$ is a homomorphism and $f(v)=f(t)$. 
This proves $(ii)$. 
\end{proof}

\begin{proof}[Proof of Proposition \ref{real2real}] 
Let 
$$\xymatrix{
 & G \ar[d]^-{\phi}\ar@{.>}[ld]_-{\widetilde{\phi}}
\\
B\ar[r]_-{\alpha} & A}$$
be a real embedding problem $\mathbf E$ for $G$. 
Let $H\subseteq A$ be the image of $\phi$. 
Since $G$ is a pro-$2$ group, $H$ is a $2$-group. 
Let $C\subseteq B$ be the preimage of $H$ with respect to $\alpha$ and let $P\subseteq C$ be a $2$-Sylow subgroup. 
By part $(i)$ of Lemma \ref{sylow} the restriction $\alpha|_P \colon P \to H$ is surjective. 
Clearly,
$$\xymatrix{
 & G \ar[d]^-{\phi}\ar@{.>}[ld]_-{\widetilde{\phi}}
\\
P\ar[r]_-{\alpha|_P} & H}$$
is a $2$-embedding problem $\mathbf F$ for $G$ such that if it has a solution then $\mathbf E$ also has a solution. 
Therefore it will be enough to show that $\mathbf F$ is real because of our assumptions on $G$. 
Let $x\in G$ be an involution. 
By assumption there is a $2$-torsion element $g\in C$ such that $\alpha(g)=\phi(x)$. 
Then there is a $y\in P$ which is conjugate to $g$ in $C$ such that $\alpha(y)=\phi(x)$ by part $(ii)$ of Lemma \ref{sylow}. 
Since $y$ is conjugate to a $2$-torsion element, it is also $2$-torsion. 
So $\mathbf F$ is real. 
\end{proof}



\begin{thm}\label{foximaxi} 
Let $G$ be a pro-$2$ group. Then the following are equivalent:
\begin{enumerate}
\item[$(i)$] $G$ is real projective. 
\item[$(ii)$] $G$ is isomorphic to the maximal pro-$2$ quotient of a real projective group. 
\end{enumerate}
\end{thm}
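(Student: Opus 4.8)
The implication $(i)\Rightarrow(ii)$ is immediate: a pro-$2$ group is its own maximal pro-$2$ quotient, so if $G$ is real projective we may simply take the real projective group in $(ii)$ to be $G$ itself. The real content is therefore $(ii)\Rightarrow(i)$. Fix a real projective profinite group $\widehat G$ together with the projection $\pi\colon\widehat G\to G$ onto its maximal pro-$2$ quotient, and write $N=\Ker\pi$, so that $N$ is the intersection of all open normal subgroups of $\widehat G$ of $2$-power index.

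I would first dispose of the embedding problems by a lift--solve--descend argument. By Proposition \ref{real2real} it suffices to solve every real $2$-embedding problem for $G$, say given by a surjection $\alpha\colon B\to A$ of finite $2$-groups and a structure map $\phi\colon G\to A$. Composing with $\pi$ produces the embedding problem $\phi\circ\pi\colon\widehat G\to A$ over $\alpha$ for $\widehat G$. This problem is again real: if $t\in\widehat G$ is an involution with $\phi(\pi(t))\ne1$, then $\pi(t)\in G$ satisfies $\pi(t)^2=1$ and $\phi(\pi(t))\ne1$, hence $\pi(t)$ is an involution of $G$, and the reality of the original problem yields an involution $b\in B$ with $\alpha(b)=\phi(\pi(t))$. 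Since $\widehat G$ is real projective, this real embedding problem has a solution $\widetilde\psi\colon\widehat G\to B$ with $\alpha\circ\widetilde\psi=\phi\circ\pi$. Because $B$ is a finite $2$-group, $\Ker\widetilde\psi$ is an open normal subgroup of $2$-power index and hence contains $N$; thus $\widetilde\psi$ factors as $\widetilde\psi=\widetilde\phi\circ\pi$ for a unique continuous $\widetilde\phi\colon G\to B$. Cancelling the epimorphism $\pi$ in $\alpha\circ\widetilde\phi\circ\pi=\phi\circ\pi$ gives $\alpha\circ\widetilde\phi=\phi$, so $\widetilde\phi$ solves the original problem.

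It remains to prove that $\Inv(G)$ is closed, which I expect to be the main obstacle. The plan is to establish the equality $\Inv(G)=\pi(\Inv(\widehat G))$; the right-hand side is the continuous image of the set $\Inv(\widehat G)$, which is closed (as $\widehat G$ is real projective) hence compact, so $\pi(\Inv(\widehat G))$ is compact and therefore closed in the Hausdorff group $G$. For the inclusion $\pi(\Inv(\widehat G))\subseteq\Inv(G)$ I would show that no involution of $\widehat G$ lies in $N$: every involution $t$ of a real projective group is cut out by a continuous homomorphism $\chi\colon\widehat G\to\Z/2$ with $\chi(t)\ne0$ (a property of real projective groups rooted in the arithmetic of orderings, to be extracted from Haran--Jarden \cite{HJ,HJ2}); as $\chi$ factors through the pro-$2$ quotient $G$, an involution in $N$ would force $\chi(t)=0$, a contradiction. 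Since $\pi$ carries elements of order dividing $2$ to elements of order dividing $2$, this gives $\pi(\Inv(\widehat G))\subseteq\Inv(G)$. For the reverse inclusion $\Inv(G)\subseteq\pi(\Inv(\widehat G))$ I would lift involutions: given an involution $g\in G$, write $G=\varprojlim_V G/V$ over the open normal subgroups $V$ with $G/V$ a finite $2$-group, so that each image $gV$ is an involution of the finite $2$-group quotient $\phi_V\circ\pi\colon\widehat G\to G/V$. Using that in a real projective group every involution of a finite quotient lifts to an involution (again a structural property from \cite{HJ,HJ2}), the sets $S_V=\Inv(\widehat G)\cap(\phi_V\circ\pi)^{-1}(gV)$ are nonempty and closed, and they are nested as $V$ shrinks; by compactness of $\Inv(\widehat G)$ their intersection contains an involution $\widehat g$, which then satisfies $\pi(\widehat g)=g$ because $\pi(\widehat g)$ and $g$ agree in every $G/V$.

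The two delicate inputs are exactly the statements about involutions in real projective groups --- that they are detected by $\Z/2$-valued characters, and that they lift from finite quotients --- both of which I would take from the structure theory of Haran--Jarden. Everything else reduces to the formal lift--solve--descend step together with Proposition \ref{real2real}, and the resulting equality $\Inv(G)=\pi(\Inv(\widehat G))$ then yields the remaining closedness condition of Definition \ref{def:real_projective_group}.
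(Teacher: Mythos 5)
The first half of your argument (solving real embedding problems by lift--solve--descend through $\pi$, after reducing to $2$-embedding problems via Proposition \ref{real2real}) is correct and is essentially the paper's argument. The gap is in the second half, in the inclusion $\Inv(G)\subseteq\pi(\Inv(\widehat G))$. The lemma you invoke there --- that in a real projective group every involution of a finite quotient lifts to an involution --- is false as stated: $\Z_2$ is free pro-$2$, hence projective, hence real projective with $\Inv(\Z_2)=\emptyset$, yet its quotient $\Z/2\Z$ contains an involution that does not lift. What you actually need is the more specific statement that every involution of the maximal pro-$2$ quotient $G=\widehat G_2$ lifts to an involution of $\widehat G$; but by the very compactness argument you run (the image $\pi(\Inv(\widehat G))$ is closed, so $g$ lies in it if and only if each $gV$ lies in its image in $G/V$), the nonemptiness of all the sets $S_V$ is \emph{equivalent} to that lifting statement, so the reduction to finite levels buys nothing and the crux is left unproven. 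This is not a small omission: it is essentially a Becker-type theorem relating involutions of the maximal pro-$2$ Galois group to orderings, and it is exactly the point where the paper deploys heavy machinery.

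The paper avoids your equality $\Inv(G)=\pi(\Inv(\widehat G))$ altogether. Using Remark \ref{rem:equiv_of_defns}, it suffices to exhibit an open subgroup of $G$ without $2$-torsion. Realising $\widehat G=\Gamma$ as the absolute Galois group of a pseudo real closed field $K$, the paper takes $\Delta=\mathrm{Gal}(\overline K/K(\sqrt{-1}))$, which is torsion-free of cohomological dimension at most one by Artin--Schreier; the Rost--Voevodsky norm residue theorem shows $\Hb(\Delta_2,\Z/2\Z)\cong\Hb(\Delta,\Z/2\Z)$, hence $\Delta_2$ also has cohomological dimension at most one and is torsion-free; and Lemma \ref{indi} identifies $\Delta_2$ with the open index-two subgroup $\Ker(\Gamma_2\to\Z/2\Z)$ of $G$. (Your ``detecting character'' $\chi$ is precisely the quadratic character of $-1$, and the forward inclusion $\pi(\Inv(\widehat G))\subseteq\Inv(G)$ you derive from it is fine; but to conclude that $\Ker\chi_2$ is torsion-free, or equivalently that $\chi_2$ detects \emph{all} involutions of $G$, you again need the unproven lifting statement --- or the cohomological argument the paper substitutes for it.)
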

\begin{proof} 
Since the maximal pro-$2$ quotient of a pro-$2$ group is the group itself, clearly $(i)$ implies $(ii)$. 
Now let $G$ be a pro-$2$ group which  satisfies $(ii)$. 
We start the proof of the other implication by showing that every real embedding problem for $G$ has a solution. 
By Proposition \ref{real2real} we need to show that any real $2$-embedding problem $\mathbf E$:
$$\xymatrix{
 & G \ar[d]^-{\phi}\ar@{.>}[ld]_-{\widetilde{\phi}}
\\
B\ar[r]_-{\alpha} & A}$$
has a solution. 
Recall that a field $K$ is called {\it pseudo real closed} 
if every absolutely irreducible variety defined over $K$ which has a simple $K_<$-rational point for every ordering $<$ on $K$ has a $K$-rational point, 
where $K_<$ denotes the real closure of the ordered field $(K,<)$. 
By the assumption, there is a real projective group $\Gamma$ such that $G$ is isomorphic to the maximal pro-$2$ quotient of $\Gamma$. 
By the work of Haran--Jarden in \cite[Theorem 10.4 on page 487]{HJ},  
we can then choose a pseudo real closed field $K$ such that $\Gamma$ is the absolute Galois group of $K$.  
Let $q \colon \Gamma \to G$ be the corresponding quotient homomorphism. 
We claim that
\[
\xymatrix{
 & \Gamma \ar[d]^-{\phi\circ q}\ar@{.>}[ld]_-{\widetilde{\phi}}
\\
B\ar[r]_-{\alpha} & A}
\]
is a real embedding problem which we denote by $\mathbf F$. 
Indeed, let $x\in\Gamma$ be an involution such that $\phi\circ q(x)$ is also an involution. 
Then $q(x)\in G$ is also an involution, so there is an involution $g\in B$ such that
$\alpha(g)=\phi(q(x))=(\phi\circ q)(x)$. 
So $\mathbf F$ is real. 
Since $\Gamma$ is real projective as the absolute Galois group of a pseudo real closed field, 
the embedding problem $\mathbf F$ has a solution $\widetilde{\phi} \colon \Gamma\to B$. 
But $B$ is a $2$-group, so $\widetilde{\phi}$ is the composition of $q$ and a continuous homomorphism $G\to B$. 
The latter is a solution to $\mathbf E$. 
To finish the proof of Theorem \ref{foximaxi} we need the following notation and lemma. 

\begin{notn} 
For every profinite group $G$, 
let $G_2$ denote its maximal pro-$2$ quotient and let $t_G \colon G\to G_2$ denote the quotient map. 
This assignment is functorial, that is, for every homomorphism $h \colon G\to H$ of profinite groups there is a unique homomorphism $h_2 \colon G_2\to H_2$ such that the diagram:
$$\xymatrix{
G  \ar[r]^-{h} \ar[d]_-{t_G} & H \ar[d]^-{t_H}
\\ G_2\ar[r]_-{h_2} & H_2}$$
is commutative. 
\end{notn}

\begin{lemma}\label{indi} 
Let $G$ be a profinite group, let $H\subseteq G_2$ be an open subgroup, let $I$ be the preimage $t_G^{-1}(H)\subseteq G$, 
and let $h \colon I\to H$ denote the restriction of $t_G$ onto $I$. 
Then $h_2 \colon I_2\to H_2=H$ is an isomorphism.
\end{lemma}
\begin{proof} 
Since $h$ is surjective, the map $h_2$ is also surjective, so we only need to show that it is injective, too. 
Let $1\neq\gamma\in I_2$ be arbitrary. 
Then there is an open normal subgroup $U\subseteq I_2$ such that $\gamma\not\in U$. 
Since $t_I$ is continuous, the preimage $t_I^{-1}(U)$ is an open subgroup of $2$-power index in $I$. 
Since $I$ is an open subgroup of $2$-power index in $G$, we get that $t_I^{-1}(U)$ is an open subgroup of $2$-power index in $G$, too. 
Set $N \coloneqq \bigcap_{\delta\in G}\delta^{-1}t_I^{-1}(U)\delta$. 
Clearly $N$ is a normal subgroup. 
Since $\delta^{-1}t_I^{-1}(U)\delta$ only depends on the coset $t_I^{-1}(U)\delta$, of which there are only finitely many, we get that $N$ is a finite intersection of open subgroups of $2$-power index in $G$, so it is also an open  subgroup of $2$-power index in $G$. 
Therefore the subgroup $t_I(N)\subseteq I_2$ is the preimage of $t_G(N)\subseteq G_2$, but clearly $\gamma\not\in t_I(N)$, and hence $h_2(\gamma)\neq1$.
\end{proof}

Now we return to the proof of Theorem \ref{foximaxi}. 
By Definition \ref{def:real_projective_group}, it remains to show that $G = \Gamma_2$ contains an open subgroup without $2$-torsion. 
Let $K$ be the field introduced above, and let
$\Delta\subseteq\Gamma$ be the open subgroup corresponding to the finite extension $K(\sqrt{-1})/K$. 
As $\Delta$ is isomorphic to the absolute Galois group of $K(\sqrt{-1})$, 
it has cohomological dimension at most $1$ by \cite[Corollary 2.4]{PQ}. 
By the Rost--Voevodsky norm residue theorem \cite{Voe}  
(formerly known as the Milnor conjecture), 
the pull-back map $\Hb(\Delta_2,\mathbb Z/2\mathbb Z)\to \Hb(\Delta,\mathbb Z/2\mathbb Z)$ induced by the quotient map $t_{\Delta} \colon \Delta\to\Delta_2$ is an isomorphism. 
Therefore, $\Delta_2$ also has cohomological dimension at most $1$.  
Hence, by \cite[Proposition 14 on page 19]{Se}, any closed subgroup of $\Delta_2$ has cohomological dimension at most $1$. 
Since a finite subgroup would have infinite cohomological dimension, $\Delta_2$ is torsion-free. 
Moreover, $\Delta$ has index dividing two in $\Gamma$, so $\Delta_2$ is isomorphic
 to the kernel of the homomorphism $\Gamma_2\to\mathbb Z/2\mathbb Z$ 
 corresponding to the homomorphism $\Gamma \to \mathrm{Gal}(K(\sqrt{-1})/K) \subseteq \mathbb Z/2\mathbb Z$ by Lemma \ref{indi}. 
Therefore, 
$\Delta_2$ is an open, torsion-free subgroup of $\Gamma_2$. 
This finishes the proof of Theorem \ref{foximaxi}. 
\end{proof}


\section{Pro-$2$ real projective groups versus quasi-Boolean groups}\label{sec:pro2_real_vs_quasi_Boolean}

The goal of this section is to prove Theorems \ref{qb_rp} and \ref{rp_qb}. 
We begin with the following recollection and notation. 

\begin{rem} 
The free product  $G_1*_pG_2$ of pro-$p$ groups $G_1$ and $G_2$ is the coproduct of $G_1$ and $G_2$ in the category of pro-$p$ groups, that is, it has the following universal property. 
For $j=1,2$, let $\iota_j \colon G_j\to G_1*_pG_2$ be the composition 
of the natural inclusion $G_j \to G_1*G_2$ and the quotient homomorphism  $G_1*G_2\to G_1*_pG_2$. 
Then for every pro-$2$ group $G$ and for every pair of homomorphisms $f_j \colon G_j\to G$ of pro-$2$ groups, 
there is a unique homomorphism $f_1*_pf_2 \colon G_1*_pG_2\to G$ of pro-$2$ groups such that $(f_1*_pf_2)\circ\iota_j=f_j$ for $j=1,2$. 
This follows obviously from the definition when $G$ is finite, and the general case follows by taking the projective limit. 
\end{rem}

\begin{notn} 
Let $G$ be a profinite group. Let $\mathcal Y_p(G)$ denote the subset of elements of order dividing $p$ in $G$. We equip
$\mathcal Y_p(G)$ with the subspace topology. 
Let $\mathcal X_p(G)$ denote the quotient of $\mathcal Y_p(G)$ by the conjugation action of $G$. 
We equip $\mathcal X_p(G)$ with the quotient topology. 
Let $\mathcal Y_p^*(G)$ denote the complement of $1$ in $\mathcal Y_p(G)$, 
and let $\mathcal X_p^*(G)$ denote the complement of the conjugacy class of $1$ in $\mathcal X_p(G)$. 
When $p=2$ we let $\mathcal Y(G),\mathcal X(G),\mathcal Y^*(G),\mathcal X^*(G)$ denote 
$\mathcal Y_p(G),\mathcal X_p(G),\mathcal Y_p^*(G),\mathcal X_p^*(G)$, respectively. 
We note that $\mathcal Y^*(G)$ equals the set $\Inv(G)$ of involutions in $G$. 
\end{notn}

\begin{rem}\label{uki-ho} 
Let $X$ be a topological space. 
Recall from Definition \ref{def:B_of_X} the free pro-$2$ product $\mathbb B(X)$. 
We note that $\mathbb B(X)$ has the following universal property: 
Let $\iota_X \colon X\to\mathbb B(X)$ be the 
composition of the natural inclusion $X\to
\bigast_X\mathbb Z/2\mathbb Z$ and the quotient homomorphism
$\bigast_X\mathbb Z/2\mathbb Z\to\mathbb B(X)$. 
Then, for every pro-$2$ group $G$ and for every continuous map $f \colon X \to \mathcal Y(G)$, 
there is a unique homomorphism $b_f \colon \mathbb B(X)\to G$ of pro-$2$ groups such that $b_f\circ\iota_X=f$. 
This follows from the definition when $G$ is finite, and the general case follows by taking the projective limit. 
\end{rem}

\begin{notn} 
Let $f \colon X \to Y$ be a continuous map of topological spaces. 
Since $i_Y \colon Y\to\mathbb B(Y)$ is continuous and its image lies in $\mathcal Y(\mathbb B(Y))$, 
by the universal property in Remark \ref{uki-ho}  above there is a unique homomorphism $\mathbb B(f) \colon \mathbb B(X)\to\mathbb B(Y)$ of pro-$2$ groups such that $\mathbb B(f)\circ\iota_X=\iota_Y\circ f$. 
This makes the assignment $X\mapsto\mathbb B(X)$ into a functor. 
\end{notn}

\begin{prop}\label{replace} 
For every topological space $X$, the map $\mathbb B(u_X) \colon \mathbb B(X) \to \mathbb B(\widehat X)$ induced by the profinite completion $u_X \colon X\to\widehat X$ is an isomorphism of pro-$2$ groups. 
\end{prop}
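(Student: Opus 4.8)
The plan is to prove that $\mathbb B(u_X)$ is an isomorphism by exhibiting an inverse, using the universal property of $\mathbb B(-)$ from Remark \ref{uki-ho} together with the universal property of the profinite completion from Remark \ref{rem:completion_universal}. The key observation is that $\mathbb B(X)$ is a pro-$2$ group, so the canonical space of involutions $\mathcal Y(\mathbb B(X))$ is profinite; since the composite $X \xrightarrow{\iota_X} \mathbb B(X)$ lands in $\mathcal Y(\mathbb B(X))$, its image should only depend on the profinite completion of $X$, and this is what forces $\mathbb B(u_X)$ to be invertible.

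First I would show that the natural map $\iota_X \colon X \to \mathcal Y(\mathbb B(X))$ factors uniquely through $u_X \colon X \to \widehat X$. The target $\mathcal Y(\mathbb B(X))$ is a closed subspace of the profinite group $\mathbb B(X)$, hence itself profinite; in particular it is a projective limit of finite discrete spaces. By the universal property of the profinite completion (Remark \ref{rem:completion_universal}, applied levelwise to the finite discrete quotients of $\mathcal Y(\mathbb B(X))$, or directly to maps into finite discrete spaces), there is a unique continuous map $\overline{\iota_X} \colon \widehat X \to \mathcal Y(\mathbb B(X))$ with $\overline{\iota_X} \circ u_X = \iota_X$. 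Now apply the universal property of $\mathbb B(-)$ in Remark \ref{uki-ho} to the continuous map $\overline{\iota_X} \colon \widehat X \to \mathcal Y(\mathbb B(X))$: this yields a homomorphism $\psi \colon \mathbb B(\widehat X) \to \mathbb B(X)$ of pro-$2$ groups with $\psi \circ \iota_{\widehat X} = \overline{\iota_X}$.

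Next I would check that $\psi$ and $\mathbb B(u_X)$ are mutually inverse by verifying that both composites agree with the identity after precomposition with the structural inclusions and then invoking uniqueness. For the composite $\psi \circ \mathbb B(u_X) \colon \mathbb B(X) \to \mathbb B(X)$, precomposing with $\iota_X$ gives $\psi \circ \mathbb B(u_X) \circ \iota_X = \psi \circ \iota_{\widehat X} \circ u_X = \overline{\iota_X} \circ u_X = \iota_X$, so by the uniqueness clause in Remark \ref{uki-ho} this composite equals the identity of $\mathbb B(X)$. For the other composite $\mathbb B(u_X) \circ \psi \colon \mathbb B(\widehat X) \to \mathbb B(\widehat X)$, precomposing with $\iota_{\widehat X}$ gives $\mathbb B(u_X) \circ \psi \circ \iota_{\widehat X} = \mathbb B(u_X) \circ \overline{\iota_X}$, and I would need this to equal $\iota_{\widehat X}$; again the uniqueness clause then forces the composite to be the identity.

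The main obstacle is the last identity $\mathbb B(u_X) \circ \overline{\iota_X} = \iota_{\widehat X}$, i.e. pinning down that the section $\overline{\iota_X}$ transported into $\mathbb B(\widehat X)$ recovers the canonical inclusion of $\widehat X$. The cleanest route is to observe that both $\mathbb B(u_X) \circ \overline{\iota_X}$ and $\iota_{\widehat X}$ are continuous maps $\widehat X \to \mathcal Y(\mathbb B(\widehat X))$ which agree after precomposition with $u_X$ (indeed $\mathbb B(u_X)\circ \overline{\iota_X}\circ u_X = \mathbb B(u_X)\circ\iota_X = \iota_{\widehat X}\circ u_X$ by functoriality of $\mathbb B(-)$), and then apply the uniqueness in the universal property of $u_X$ once more, now to maps out of $\widehat X$ into the profinite space $\mathcal Y(\mathbb B(\widehat X))$; since $u_X \colon X \to \widehat X$ is itself a profinite completion and $\widehat X$ is already profinite, $\widehat X$ is its own completion and the factorisation is unique. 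Thus $\mathbb B(u_X)\circ\overline{\iota_X}=\iota_{\widehat X}$, completing the argument.
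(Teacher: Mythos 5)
Your proposal is correct. It shares its first half with the paper's argument: the paper also observes that $\mathcal Y(G)$ is profinite for any pro-$2$ group $G$, factors a continuous map $X\to\mathcal Y(G)$ uniquely through $u_X$, and uses the universal property of $\mathbb B(\widehat X)$ to produce a homomorphism $\mathbb B(\widehat X)\to\mathbb B(X)$ that is a left inverse of $\mathbb B(u_X)$ after invoking the uniqueness clause of Remark \ref{uki-ho} for $\mathbb B(X)$; this is exactly your verification that $\psi\circ\mathbb B(u_X)=\id_{\mathbb B(X)}$. Where you diverge is in the other half. The paper proves surjectivity of $\mathbb B(u_X)$ by a topological argument: the image is compact, hence closed in the Hausdorff group $\mathbb B(\widehat X)$, and it is dense because $u_X(X)$ is dense in $\widehat X$ (citing \cite[Lemma 1.1.7]{RZ}) so that its image generates every finite $2$-group quotient. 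You instead show $\mathbb B(u_X)\circ\psi=\id_{\mathbb B(\widehat X)}$ purely formally, by checking that $\mathbb B(u_X)\circ\overline{\iota_X}$ and $\iota_{\widehat X}$ agree after precomposition with $u_X$ and then applying the uniqueness clauses of both universal properties; in effect you prove that $\bigl(\mathbb B(\widehat X),\iota_{\widehat X}\circ u_X\bigr)$ and $\bigl(\mathbb B(X),\iota_X\bigr)$ corepresent the same functor $G\mapsto\mathrm{Map}_{\mathrm{cont}}(X,\mathcal Y(G))$ on pro-$2$ groups. Both routes are sound. Yours is more economical and avoids the compactness and density considerations entirely (though it does rely, as the paper also implicitly does, on extending the universal property of Remark \ref{rem:completion_universal} from finite discrete targets to profinite targets, and on the fact that two continuous maps out of $\widehat X$ into a Hausdorff space agreeing on the dense subset $u_X(X)$ coincide). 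The paper's surjectivity argument, by contrast, makes the density of $u_X(X)$ in $\widehat X$ explicit, which is reused elsewhere in the article.
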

\begin{proof} 
%
First we are going to show that $\mathbb B(u_X)$ is surjective. 
We recall that $\mathbb B(X)$ is a profinite space by construction. 
Hence, by $(i) \Rightarrow (iii)$ of Theorem \ref{mama}, 
$\mathbb B(X)$ is compact and Hausdorff. 
Since the continuous image of a compact space is compact by \cite[I \S 9.4,  Corollary 1 on page 87]{BourbakiTop}, 
we conclude that the image of $\mathbb B(u_X)$ is compact. 
Since $\mathbb B(\widehat X)$ is Hausdorff, the image of $\mathbb B(u_X)$ is closed
 in $\mathbb B(\widehat X)$ by \cite[I \S 9.4, Corollary 2 on page 87]{BourbakiTop}.  
Hence it will be sufficient to show that the image is dense. 
In order to do so it will be enough to prove that, for every $2$-group $G$ 
and continuous surjective homomorphism $f \colon \mathbb B(\widehat X)\to G$, 
the composition $f\circ\mathbb B(u_X)$ is surjective. 
Note that $\widehat X$ generates $\bigast_{\widehat X}\mathbb Z/2\mathbb Z$, so $f(\widehat X)$ generates $G$. 
By \cite[Lemma 1.1.7]{RZ}, the image $u_X(X)$ of $X$ is dense in $\widehat X$. 
This implies that the image $f\circ u_X(X)$ is dense in $f(\widehat X)$. 
But $G$ is finite, so it is discrete, and hence $f\circ u_X(X)$ is equal to $f(\widehat X)$. So $f\circ u_X(X)$ generates $G$, therefore $f\circ\mathbb B(u_X)$ is surjective.

Next we are going to show that $\mathbb B(u_X)$ is injective. 
In order to do so it will be sufficient to show that $\mathbb B(u_X) \circ \iota_X \colon X\to\mathbb B(\widehat X)$ 
has the universal property described in Remark \ref{uki-ho}. 
Indeed then there is a continuous homomorphism $f \colon \mathbb B(\widehat X)\to\mathbb B(X)$ such that $f\circ\mathbb B(u_X)\circ\iota_X$ is $\iota_X$, 
and hence $f\circ\mathbb B(u_X)$ is the identity of $\mathbb B(X)$. 
Now let $G$ be a  pro-$2$ group and $f \colon X\to\mathcal Y(G)$ be a continuous map. 
Since $\mathcal Y(G)$ is profinite, there is a unique continuous map $g \colon \widehat X\to\mathcal Y(G)$ such that $f=g\circ u_X$ because of the universal property of $u_X$. 
Using the universal property of $\mathbb B(\widehat X)$ we get that there is a continuous homomorphism $b_g \colon \mathbb B(\widehat X)\to G$ of pro-$2$ groups such that $b_f\circ\iota_{\widehat X}=g$. 
By the functoriality of the assignment $X \to \mathbb B(X)$,
$\iota_{\widehat X} \circ u_X = \mathbb B(u_X)\circ \iota_X$. 
Thus,
\begin{equation*}
b_f \circ \mathbb B(u_X) \circ \iota_X = b_f \circ \iota_{\widehat X} \circ u_X=g\circ u_X=f,
\end{equation*}
as desired.
\end{proof}

We recall from Definition \ref{def:Boolean_etc_intro} that we call a pro-$2$ group {\it Boolean} if it is isomorphic to $\mathbb B(X)$ for some topological space $X$. 
By Proposition \ref{replace} we can always assume that $X$ is profinite. 
We call a pro-$2$ group {\it quasi-Boolean} if it is the free product of a free pro-$2$ group and a Boolean group. 

\begin{thm}\label{qb_rp} 
Every quasi-Boolean pro-$2$ group is real projective. 
\end{thm}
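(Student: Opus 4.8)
The plan is to write $G = F *_2 \mathbb B(X)$ with $F$ a free pro-$2$ group and, by Proposition \ref{replace}, with $X$ profinite, and then to verify directly the two conditions of Definition \ref{def:real_projective_group}: that $\Inv(G)$ is closed and that every real embedding problem for $G$ has a solution. Throughout I would exploit that $G$ is the coproduct of $F$ and $\mathbb B(X)$ in pro-$2$ groups, so that giving a continuous homomorphism out of $G$ is the same as giving a pair of continuous homomorphisms out of the two factors, with no compatibility condition between them.

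For closedness I would invoke the criterion of Remark \ref{rem:equiv_of_defns}: it suffices to exhibit an open subgroup $U \le G$ with $U \cap \Inv(G) = \emptyset$. I define a continuous homomorphism $\chi \colon G \to \Z/2\Z$ that is trivial on $F$ and, on $\mathbb B(X)$, is induced via Remark \ref{uki-ho} by the constant map $X \to \mathcal Y(\Z/2\Z)$ with value the nontrivial element; these assemble by the universal property of the free pro-$2$ product, and I set $U = \Ker \chi$, an open subgroup of index two. By the description of torsion in free pro-$2$ products, every involution of $G$ is conjugate into a free factor; since $F$ is torsion-free it is conjugate to an involution of $\mathbb B(X)$, hence to a generator $\iota_X(x)$, and $\chi(\iota_X(x))$ is the nontrivial element of $\Z/2\Z$ by construction. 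As $\chi$ takes values in an abelian group, conjugation does not change this image, so every involution lies outside $U$. Thus $U$ is $2$-torsion-free and $\Inv(G)$ is closed.

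For embedding problems, Proposition \ref{real2real} reduces the task to solving real $2$-embedding problems, so I may assume $A$ and $B$ are finite $2$-groups. Given such a problem with $\phi \colon G \to A$ and surjection $\alpha \colon B \to A$, I solve it on each factor and glue. On $F$ the restriction $\phi|_F$ lifts through $\alpha$ because free pro-$2$ groups are projective. On $\mathbb B(X)$ the restriction corresponds via Remark \ref{uki-ho} to a continuous map $f \colon X \to \mathcal Y(A)$; since $A$ is finite and $X$ profinite, $f$ is locally constant, so its fibres partition $X$ into finitely many clopen pieces on which $f$ is constant. On a piece where $f \equiv 1$ I lift by $1$, and on a piece where $f \equiv a$ with $a$ an involution I note that $\iota_X(x)$ is then an involution of $G$ with $\phi(\iota_X(x)) = a \neq 1$, so realness of the embedding problem yields an involution $b \in B$ with $\alpha(b) = a$, which I take as the constant lift there. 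This defines a continuous lift $g \colon X \to \mathcal Y(B)$ of $f$, hence by Remark \ref{uki-ho} a homomorphism $\mathbb B(X) \to B$ lifting $\phi|_{\mathbb B(X)}$. The universal property of the free pro-$2$ product then combines the two lifts into $\widetilde\phi \colon G \to B$, and uniqueness in that universal property forces $\alpha \circ \widetilde\phi = \phi$, solving the problem.

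I expect the two delicate points to be these. The realness hypothesis is used in exactly one place, the pointwise choice of $b \in B$; the fact that this choice can be made \emph{continuously} in $x$ is not automatic, but is rescued by the local constancy of $f$, which permits a single constant choice on each clopen piece. The second input I would isolate and cite carefully is the structure of torsion in free pro-$2$ products used for closedness, namely that every element of finite order is conjugate into one of the factors (see \cite[Chapter 4]{HJbook} and \cite[Chapter 5]{Ribesbook}); this is what lets me reduce the analysis of $\Inv(G)$ to the generators of $\mathbb B(X)$.
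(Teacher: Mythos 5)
Your proof is correct in outline, but it takes a genuinely different route from the paper, and it leans on one external theorem that deserves to be flagged. The paper does not verify Definition \ref{def:real_projective_group} directly: it invokes Theorem \ref{foximaxi} to reduce to showing that $G$ is the maximal pro-$2$ quotient of a real projective \emph{profinite} group, then uses the Haran--Jarden characterisation (Theorem \ref{real-free}: real projective $=$ closed subgroup of a real free group) and exhibits $G$ as the maximal pro-$2$ quotient of an explicit profinite completion $\overline G$ of $\bigast_Y\Z*\bigast_X\Z/2\Z$, which is identified with the real free group $\widehat D(X,Y_+,e)$ of \cite[Lemma 1.3]{HJ2}. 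Your embedding-problem half is cleaner and more self-contained than this: the reduction via Proposition \ref{real2real}, the clopen partition of $X$ coming from local constancy of $\phi\circ\iota_X$, the pointwise use of realness on each piece, and the gluing via the coproduct property are all sound (and you correctly observe that $\phi(\iota_X(x))=a\neq 1$ forces $\iota_X(x)$ to be a genuine involution, so realness applies). The cost is in the closedness half: your claim that every involution of $G=F*_2\mathbb B(X)$ is conjugate to some $\iota_X(x)$ requires the conjugacy theorem for torsion in free pro-$2$ products \emph{twice} --- once for the two-factor product (Herfort--Ribes) and once for the free product of the constant sheaf $\Z/2\Z$ over the profinite space $X$. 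This is a true but nontrivial theorem, of comparable depth to the Haran--Jarden results the paper cites, and your references to \cite[Chapter 4]{HJbook} and \cite[Chapter 5]{Ribesbook} should be sharpened to the precise statements (note that you cannot instead quote Theorem \ref{sistar} of the paper, since its proof uses Theorem \ref{qb_rp}). In exchange, your argument avoids Theorem \ref{foximaxi} entirely, and hence avoids the detour through pseudo real closed fields and the norm residue theorem; the paper's route avoids the torsion conjugacy theorem by outsourcing all structure theory to \cite{HJ2}.
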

\begin{proof} Let $G$ be a quasi-Boolean pro-$2$ group. 
It will be sufficient to show that $G$ is the maximal pro-$2$ quotient of a real projective group by Theorem \ref{foximaxi}. 
In order to do so, we will use a group-theoretical characterisation of real projective groups by Haran and Jarden. 
Following \cite[Definition 1.1 on page 156]{HJ2} we define: 

\begin{defn} 
A profinite group $D$ is said to be {\it real free} if it contains disjoint closed subsets $X$ and $Y$ such that $X\subseteq\mathcal Y^*(D)$, $1\in Y$, and every continuous map $\phi$ from $X\cup Y$ into a profinite group $H$ such that $\phi(x)^2=1$ for every $x\in X$ and $\phi(1)=1$ extends to a unique homomorphism of $D$ into $H$.
\end{defn}

\begin{thm}[Haran--Jarden]\label{real-free} 
A profinite group $G$ is real projective if and only if $G$ is isomorphic to a closed subgroup of a real free group. 
\end{thm}
\begin{proof} 
This claim is \cite[Theorem 3.6 on page 160]{HJ2}. 
\end{proof}

Now we return to the proof of Theorem \ref{qb_rp}.  
For every set $Y$, let $F(Y)$ denote the free pro-$2$ group on $Y$ as defined in \cite[Section 1.5 on pages 7--8]{Se}. 
By assumption, $G$ is the free product of a free pro-$2$ group $F(Y)$ 
and a Boolean group $\mathbb B(X)$ for a set $Y$ and a profinite space $X$. 
By Theorem \ref{real-free} it will be sufficient to show 
that $G$ is the maximal pro-$2$ quotient of a real free profinite group. 
Let $\widehat G$ be the free product $\bigast_Y\mathbb Z*\bigast_X\mathbb Z/2\mathbb Z$, 
i.e., the group which is freely generated by the elements of the disjoint union of $Y$ and $X$, 
subject to the relation that the elements in $X$ are involutions. 
Let $\mathcal N$ be the family of normal subgroups $N$ of $\widehat G$ such that 
\begin{enumerate}
\item[$(i)$] the quotient $\widehat G/N$ is finite,
\item[$(ii)$] the composition of the natural inclusion $Y\to\widehat G$ and the quotient homomorphism $\widehat G\to\widehat G/N$ maps all but finitely many elements of $Y$ to $1$,
\item[$(iii)$] the composition of the natural inclusion $X\to\widehat G$ and the quotient homomorphism $\widehat G\to\widehat G/N$ is continuous with respect to the discrete topology on $\widehat G/N$.
\end{enumerate}
Set
$$\overline G=\varprojlim_{N\in\mathcal N}\widehat G/N.$$
Clearly $G$ is the maximal pro-$2$ quotient of $\overline G$. 
On the other hand, $\overline G$ is a real free profinite group. 
In fact, $\overline G$ is the group $\widehat D(X,Y_+,e)$ in \cite[Lemma 1.3 on pages 156--157]{HJ2}, where $Y_+$ is the one-point compactification of $Y$ equipped with the discrete topology, 
and $e\in Y_+-Y$ is the point at infinity. 
This finishes the proof of Theorem \ref{qb_rp}. 
\end{proof}

Next, we set out to prove the converse, i.e., every real projective pro-$2$ group is quasi-Boolean. 
We begin with the following 

\begin{defn} 
Let $A$ be an abelian profinite group. 
A {\it complement} of a closed subgroup $B\subset A$ is a closed subgroup $C\subseteq A$ such that $B\cap C$ is trivial, and $B+C=A$. 
\end{defn}

The following lemma is probably very well-known, but we could not find a convenient reference. 
For its proof we recall from \cite[Definition 1.1.10]{NSW} that the {\it Pontryagin dual} 
of a Hausdorff, abelian and locally compact group $A$ is the group $A^{\vee} = \Hom_{\mathrm{cont}}(A,\R/\Z)$ of continuous group homomorphisms from $A$ to $\R/\Z$ where the latter is equipped with the quotient topology inherited from $\R$. 
The group $A^{\vee}$ is considered as a topological group with the compact-open topology. 
We recall that Pontryagin duality states that the canonical homomorphism $A \to (A^{\vee})^{\vee}$ is an isomorphism 
(see \cite[Theorem 1.1.11]{NSW} and the references therein). 
In particular, for an abelian profinite group $G$, 
the Pontryagin dual $G^{\vee}$ is a discrete abelian torsion group. 

\begin{lemma}\label{mod_p} Let $G$ be a $p$-torsion abelian profinite group. Then the following holds:
\begin{enumerate}
\item[$(i)$]  there is an isomorphism $G\cong\mathbb F_p^X$ for some set $X$, 
\item[$(ii)$] every closed subgroup of $G$ has a complement. 
\end{enumerate}
\end{lemma}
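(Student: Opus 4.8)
The plan is to pass to the Pontryagin dual, where the $p$-torsion hypothesis converts $G$ into an object of pure linear algebra, and then transport the resulting statements back to $G$ by duality. For $(i)$, I would first sharpen the given fact that $G^{\vee}$ is a discrete abelian torsion group: for $\chi \in G^{\vee}$ and $g \in G$ one has $p\chi(g) = \chi(pg) = 0$ since $G$ is $p$-torsion, so $p\chi = 0$ for all $\chi$. Hence $G^{\vee}$ is a discrete $\F_p$-vector space. Choosing a basis indexed by a set $X$ gives $G^{\vee} \cong \bigoplus_{x \in X}\F_p$, and applying the dual functor together with the facts that it converts direct sums of discrete groups into direct products, that $\F_p^{\vee}\cong\F_p$, and that biduality $G \cong (G^{\vee})^{\vee}$ holds (see \cite[Theorem 1.1.11]{NSW}), I obtain $G \cong \prod_{x \in X}\F_p = \F_p^{X}$ as topological groups. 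This proves $(i)$.

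For $(ii)$, let $H \subseteq G$ be a closed subgroup. I would work with the two annihilator operations $H \mapsto H^{\perp} = \{\chi \in G^{\vee} : \chi|_{H}=0\}$ and $L \mapsto L^{\perp} = \{g \in G : \chi(g)=0 \text{ for all } \chi \in L\}$, which are order-reversing and, as standard consequences of Pontryagin duality and the fact that characters separate points, satisfy the double-annihilator identities $H^{\perp\perp}=H$ and $L^{\perp\perp}=L$, the identity $L_1^{\perp}\cap L_2^{\perp}=(L_1+L_2)^{\perp}$, and the boundary values $(G^{\vee})^{\perp}=0$ and $0^{\perp}=G$. Since $G^{\vee}$ is an $\F_p$-vector space by the proof of $(i)$, its subspace $H^{\perp}$ admits a complement $K$ by pure linear algebra (extend a basis of $H^{\perp}$ to a basis of $G^{\vee}$), so $G^{\vee}=H^{\perp}\oplus K$. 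I would then set $C \coloneqq K^{\perp}$, a closed subgroup of $G$, and claim it is a complement of $H$.

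Verifying the two defining properties reduces to the annihilator calculus: $C \cap H = K^{\perp}\cap H^{\perp\perp} = (K+H^{\perp})^{\perp} = (G^{\vee})^{\perp} = 0$, while $(C+H)^{\perp} = C^{\perp}\cap H^{\perp} = K^{\perp\perp}\cap H^{\perp} = K \cap H^{\perp} = 0$.

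The main obstacle is the single point where topology re-enters after the linear-algebra step, namely upgrading $(C+H)^{\perp}=0$ from the statement that $C+H$ is \emph{dense} in $G$ to the statement that $C+H = G$. Here I would invoke compactness: $C$ and $H$ are closed subgroups of the compact group $G$, so $C+H$ is the image of the compact space $C \times H$ under the continuous addition map and is therefore compact, hence closed; being closed with $\overline{C+H} = (C+H)^{\perp\perp} = 0^{\perp} = G$, it equals $G$. The only remaining care is in justifying the annihilator identities for the pair consisting of the profinite group $G$ and its discrete dual $G^{\vee}$, which I expect to follow directly from the cited form of Pontryagin duality.
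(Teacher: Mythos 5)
Your proposal is correct, and it follows the same overall strategy as the paper: pass to the Pontryagin dual, where the $p$-torsion and compactness hypotheses turn $G^{\vee}$ into a discrete $\F_p$-vector space, and transport linear algebra back through biduality. Part $(i)$ is essentially identical to the paper's argument. For part $(ii)$ the execution differs: the paper observes that the existence of a complement of $H$ is equivalent to the splitting of the inclusion $H \into G$, dualizes this wholesale to the statement that an epimorphism of $\F_p$-vector spaces splits, and stops there; you instead build the complement explicitly as $C = K^{\perp}$ for a linear complement $K$ of $H^{\perp}$ in $G^{\vee}$, and verify the two defining properties by the annihilator calculus. Your route costs a little more: you must justify the double-annihilator identities and, crucially, upgrade $(C+H)^{\perp}=0$ from density of $C+H$ to equality $C+H=G$ --- which you correctly do via compactness of the image of $C\times H$ under addition. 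What it buys is an explicit complement and a transparent account of exactly where topology re-enters; the paper's version is shorter but hides the same content inside the functoriality of duality. Both arguments are sound.
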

\begin{proof} 
The Pontryagin dual of $G$ is a discrete $\mathbb F_p$-linear vector space $V$ since $G$ is compact. 
Note that every isomorphism between discrete $\mathbb F_p$-vector spaces is automatically a homeomorphism, so $V$ is isomorphic to the direct sum $\mathbb F_p^{\oplus X}$, equipped with the discrete topology, as a topological group for some set $X$. 
The Pontryagin dual of $\mathbb F_p^{\oplus X}$ is $\mathbb F_p^X$, which is isomorphic to $G$ by Pontryagin duality. 
So claim $(i)$ holds.

Assertion $(ii)$ is equivalent to the following claim: let $i \colon B\to A$ be a monomorphism of $p$-torsion abelian profinite groups. 
Then there is a morphism $j \colon A\to B$ such that $j\circ i=\mathrm{id}_B$. 
By Pontryagin duality, it is equivalent to the following claim: let $p \colon V\to U$ be an epimorphism of $\mathbb F_p$-linear vector spaces. 
Then there is a morphism $r \colon U\to V$ such that $p\circ r=\mathrm{id}_U$. 
The latter is well-known. 
\end{proof}

For later purposes, we record the following fact: 

\begin{thm}\label{sistar} 
Let $G$ be a quasi-Boolean pro-$2$ group which is the free product of a free pro-$2$ group $F$ and a Boolean group $\mathbb B(X)$ for a profinite space $X$. 
Let $j_X \colon X\to \mathcal X(\mathbb B(X))$ denote the composition of the map $i_X \colon X  \to\mathcal Y(\mathbb B(X))$, 
the inclusion $\mathcal Y(\mathbb B(X))\subseteq\mathcal Y(G)$, and the quotient map $\mathcal Y(G)\to\mathcal X(G)$. 
Then the map $j_X$ is a homeomorphism onto $\mathcal X^*(G)$. 
\end{thm}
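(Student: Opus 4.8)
The plan is to check that $j_X$ is a continuous bijection from the compact space $X$ onto the Hausdorff space $\mathcal X^*(G)$, and then to invoke the standard fact that a continuous bijection from a compact space to a Hausdorff space is a homeomorphism (as in \cite[I \S 9.4, Corollary 2 on page 87]{BourbakiTop}); this applies since $X$ is profinite and hence compact. So the proof breaks into four parts: continuity of $j_X$, the statement that $j_X$ lands in $\mathcal X^*(G)$, injectivity, surjectivity onto $\mathcal X^*(G)$, and finally the Hausdorffness of $\mathcal X^*(G)$.

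First I would record the elementary parts. The map $j_X$ is continuous because $i_X$ is continuous by the construction of $\mathbb B(X)$, the canonical homomorphism $\mathbb B(X)\to G$ coming from the free product $G=F*_2\mathbb B(X)$ is continuous, and the orbit map $\mathcal Y(G)\to\mathcal X(G)$ is continuous by definition of the quotient topology. To control the image and injectivity simultaneously I would use the universal properties: by Remark \ref{uki-ho}, every continuous $f\colon X\to\F_2$ yields a homomorphism $\varphi_f\colon\mathbb B(X)\to\Z/2\Z$ with $\varphi_f(i_X(x))=f(x)$, and by the universal property of the free product there is a homomorphism $\psi_f\colon G\to\Z/2\Z$ restricting to $\varphi_f$ on $\mathbb B(X)$ and to the trivial map on $F$. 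Taking $f\equiv 1$ gives $\psi_f(i_X(x))=1$, so the image of $i_X(x)$ in $G$ is a nontrivial involution and $j_X$ indeed maps into $\mathcal X^*(G)$. For injectivity, given $x\neq x'$ the total separatedness of the profinite space $X$ (Theorem \ref{mama}) produces a continuous $f\colon X\to\F_2$ with $f(x)\neq f(x')$; since $\Z/2\Z$ is abelian, $\psi_f$ is constant on conjugacy classes, so $\psi_f(i_X(x))\neq\psi_f(i_X(x'))$ forces $i_X(x)$ and $i_X(x')$ to be non-conjugate in $G$, whence $j_X(x)\neq j_X(x')$.

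The main work, and the step I expect to be the principal obstacle, is surjectivity onto $\mathcal X^*(G)$: every involution of $G$ must be conjugate to some $i_X(x)$. Here I would invoke the structure theory of torsion in free pro-$2$ products, namely that every element of finite order in such a product is conjugate into one of the free factors. Applied to $G=F*_2\mathbb B(X)$, and using that a free pro-$2$ group $F$ is torsion-free, this shows that every involution of $G$ is conjugate into $\mathbb B(X)$; applying the same principle to $\mathbb B(X)$, regarded as the free pro-$2$ product of copies of $\Z/2\Z$ over the profinite space $X$, shows that every involution of $\mathbb B(X)$ is conjugate to one of the generators $i_X(x)$. The delicate point is that this conjugacy theorem must be used for free profinite products indexed by a profinite space rather than a discrete family, so I would cite the relevant results of Haran--Jarden \cite{HJ2} (see also \cite[Chapter 4]{HJbook} and \cite[Chapter 5]{Ribesbook}).

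Finally I would establish that $\mathcal X^*(G)$ is Hausdorff. The set $\mathcal Y(G)$ is the preimage of $1$ under the continuous squaring map $G\to G$, hence closed, so it is a compact Hausdorff (indeed profinite) space on which the compact group $G$ acts continuously by conjugation. The quotient of a compact Hausdorff space by a continuous action of a compact group is Hausdorff: two distinct orbits are disjoint compact sets and can be separated by disjoint open sets, which may be replaced by saturated open sets since $G\cdot C$ is closed for every closed $C$ when $G$ is compact, and the quotient map is open by part $(b)$ of Lemma \ref{quotienting}. Thus $\mathcal X(G)$, and therefore its subspace $\mathcal X^*(G)$, is Hausdorff. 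Combining this with the continuous bijection $X\to\mathcal X^*(G)$ from the previous paragraphs and the compactness of $X$ then yields that $j_X$ is a homeomorphism onto $\mathcal X^*(G)$, completing the proof.
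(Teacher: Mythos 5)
Your proposal is correct in its overall architecture and its continuity, image, injectivity and Hausdorffness steps essentially coincide with the paper's (the paper packages the Hausdorffness into Lemma \ref{lemma:X*(G)profinite}, proving $\mathcal X^*(G)$ compact and totally separated from the closedness of $\Inv(G)$, while you argue directly that the quotient of the compact space $\mathcal Y(G)$ by the conjugation action of the compact group $G$ is Hausdorff; both work). The genuine divergence is in the surjectivity step, which is indeed where the substance lies. You outsource it to the torsion--conjugacy theorem for free pro-$2$ products — first for the two-factor product $F*_2\mathbb B(X)$, then for $\mathbb B(X)$ viewed as a free product of copies of $\mathbb Z/2\mathbb Z$ over the profinite space $X$. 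The paper instead argues cohomologically: assuming an involution class $y$ misses the (closed) image of $j_X$, it separates $y$ from $j_X(X)$ by a continuous function $r\colon\mathcal X^*(G)\to\mathbb Z/2\mathbb Z$, lifts $r$ to a homomorphism $\overline r\in H^1(G,\mathbb Z/2\mathbb Z)$ using Scheiderer's local--global theorem \cite[Theorem 2.11]{PQ} (which applies because $G$ is already known to be real projective by Theorem \ref{qb_rp}), observes that $\overline r$ kills $i_X(X)$ and hence all of $\mathbb B(X)$ by the universal property, so it factors through the torsion-free group $F$ and must vanish on $y$ — a contradiction. Your route is structurally cleaner but leans on a nontrivial external input: the conjugacy theorem for finite subgroups of free profinite products indexed by a profinite space is exactly the delicate point you flag, and you should be careful to cite a genuinely independent source for it (e.g.\ the sheaf-theoretic torsion results in \cite[Chapter 4]{HJbook}, \cite[Chapter 5]{Ribesbook} or \cite{Melnikov}) rather than \cite[Corollary 3.3]{HJ2} or \cite[Proposition 4.2]{H2}, since those are stated under the hypothesis that $X$ is a closed system of representatives of the involution classes — which is precisely the content of Theorem \ref{sistar} and is what this paper is at pains to establish without assumption. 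The paper's approach buys independence from that machinery at the cost of invoking Theorem \ref{qb_rp} and the local--global principle; yours buys a shorter argument at the cost of a heavier citation.
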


To prove the theorem we first show the following 

\begin{lemma}\label{lemma:X*(G)profinite}
For a real projective group $G$, the space $\mathcal X^*(G)$ is compact and totally separated. 
\end{lemma}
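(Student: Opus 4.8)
The plan is to prove Lemma \ref{lemma:X*(G)profinite} by exhibiting $\mathcal X^*(G)$ as a continuous bijective (hence homeomorphic) image of a space already known to be compact and totally separated, invoking the criterion from \cite[I \S 9.4, Corollary 2 on page 87]{BourbakiTop} repeatedly. Since $G$ is real projective, $\Inv(G) = \mathcal Y^*(G)$ is closed in $G$ by Definition \ref{def:real_projective_group}. As $G$ is a profinite (hence compact Hausdorff) group, $\mathcal Y^*(G)$ is therefore a compact, totally separated subspace, so it is profinite by $(iii)\iff(iv)$ of Theorem \ref{mama}. The space $\mathcal X^*(G)$ is the quotient of $\mathcal Y^*(G)$ by the conjugation action of $G$, and the plan is to show this quotient is again compact and totally separated.

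First I would handle compactness: the quotient map $q \colon \mathcal Y^*(G) \to \mathcal X^*(G)$ is continuous and surjective by the definition of the quotient topology, and since $\mathcal Y^*(G)$ is compact, its continuous image $\mathcal X^*(G)$ is compact as well. The real content is total separatedness, and here I expect to use exactly the machinery of Proposition \ref{free}. Indeed, conjugation gives a continuous action of the compact group $G$ on the profinite space $\mathcal Y^*(G)$; however this action need not be free (involutions have nontrivial stabilisers, namely their centralisers), so Proposition \ref{free} cannot be applied verbatim. The way around this is to observe that the orbit space only depends on orbits, and to separate two distinct orbits $C = q^{-1}(\bar x)$ and $D = q^{-1}(\bar y)$ one can mimic the argument in the proof of Proposition \ref{free}: these are disjoint closed $G$-invariant subsets of $\mathcal Y^*(G)$, so by $(i)\Rightarrow(iv)$ of Theorem \ref{mama} and Lemma \ref{sep2} there exist disjoint open and closed $U,V$ with $C\subseteq U$, $D\subseteq V$; one then saturates them under the $G$-action to get disjoint open and closed $G$-invariant sets whose images under $q$ are disjoint open and closed neighbourhoods of $\bar x$ and $\bar y$, using that $q$ is open (Lemma \ref{quotienting}(b)) and that $q$ carries compact $G$-invariant closed sets to closed sets because $\mathcal X^*(G)$ is Hausdorff.

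The main obstacle is therefore establishing that $\mathcal X^*(G)$ is Hausdorff before I can use \cite[I \S 9.4, Corollary 2 on page 87]{BourbakiTop} to conclude that the saturated images are closed; this is a mild chicken-and-egg issue that the proof of Proposition \ref{free} resolves by first producing disjoint open neighbourhoods (giving Hausdorffness) and only afterwards upgrading them to closed ones. So the sharpest version of the plan is: saturate $U$ and $V$ to $G$-invariant open and closed sets $U'' ,V''$ with $C\subseteq U''$, $D\subseteq V''$ and $U''\cap V''=\emptyset$ exactly as in Proposition \ref{free}; their images $q(U''),q(V'')$ are open and disjoint, proving $\mathcal X^*(G)$ Hausdorff; then, since $U'',V''$ are compact and $\mathcal X^*(G)$ is now known to be Hausdorff, their images are also closed, giving disjoint open and closed neighbourhoods of $\bar x,\bar y$. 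This shows $\mathcal X^*(G)$ is totally separated, and combined with compactness this is precisely the assertion of the lemma. The only genuine input beyond Proposition \ref{free}'s argument is that the orbit structure, rather than freeness, suffices for the separation step, so I would phrase the saturation carefully to avoid invoking freeness anywhere.
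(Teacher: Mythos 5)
Your proof is correct, but it takes a genuinely different route from the paper's. The paper handles compactness the same way (continuous image of the compact $\Inv(G)$ under the quotient map), but for total separatedness it argues directly: given two non-conjugate involutions $x,y$, it chooses a clopen $U\subset\Inv(G)$ separating them and writes $U$ as the trace on $\Inv(G)$ of a set built from open \emph{normal} subgroups of $G$; normality makes $U$ conjugation-invariant, so its image in $\mathcal X^*(G)$ is already open and closed and separates the two classes. You instead saturate an arbitrary clopen separator under the conjugation action, transplanting the argument of Proposition \ref{free}, and your key observation is sound: freeness is never used in the separation step of that proof --- the closedness of the orbits and of the saturated sets $U'$, $V'$ only needs compactness of $G$, continuity of the action, and Hausdorffness of $\mathcal Y^*(G)$, and your order of deductions (disjoint open images first, hence Hausdorff, then closedness of the images of the compact saturations) resolves the chicken-and-egg issue correctly. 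Your version is longer but more robust, since it does not rely on being able to choose the separating clopen set conjugation-invariant from the outset; the paper's shortcut buys brevity by exploiting that the basic open sets of a profinite group are cosets of open normal subgroups. The one point you should make explicit is that the two orbits $C$ and $D$ are closed in $\mathcal Y^*(G)$ (each is the continuous image of the compact group $G$ in a Hausdorff space), since that is what licenses the application of Lemma \ref{sep2}.
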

\begin{proof}
Since $G$ is real projective, the subset $\mathcal Y^*(G) = \Inv(G)$ of involutions is closed in $G$. 
Hence $\Inv(G)$ is compact and totally separated by $(i) \Rightarrow (iv)$ of Theorem \ref{mama}.    
This shows that the quotient $\mathcal X^*(G)$ of conjugacy classes is also compact by \cite[I \S 9.4, Theorem 2 on page 87]{BourbakiTop}. 
Now let $x,y$ be two distinct involutions in $G$ which are not conjugate. 
Since $\Inv(G)$ is totally separated, there is an open and closed subset $U \subset \Inv(G)$ such that 
$x\in U$ and $y \notin U$. 
By definition of the subspace topology and since the open normal subgroups form a basis for the topology on $G$, 
there is an open normal subgroup $\widetilde{U}$ of $G$ such that $U = \Inv(G) \cap \widetilde{U}$.  
Since $\widetilde{U}$ is conjugation-invariant, both $U$ and $\Inv(G) - U$ are conjugation-invariant. 
This implies that the image of $U$ under the quotient map $\Inv(G) \to \mathcal X^*(G)$ is open   
and closed and contains the conjugacy class of $x$ but not the class of $y$. 
This shows that $\mathcal X^*(G)$ is totally separated. 
%
%
\end{proof}

\begin{proof}[Proof of Theorem \ref{sistar}] 
By Lemma \ref{lemma:X*(G)profinite}, $\mathcal X^*(G)$ is totally separated and hence Hausdorff. 
Since $X$ is compact, 
it will therefore be sufficient to show that $j_X$ maps $X$ onto $\mathcal X^*(G)$ bijectively by \cite[I \S 9.4, Corollary 2 on page 87]{BourbakiTop}. 
Let $x,y\in X$ be two distinct elements. 
Then there is a continuous map $s \colon X\to \mathbb Z/2\mathbb Z$ such that $s(x)\neq s(y)$. 
Let $\overline s \colon G\to\mathbb Z/2\mathbb Z$ be the unique homomorphism such that the restriction of $\overline s$ onto $F$ is trivial, and onto $\mathbb B(X)$ is $b_s$ (as defined in Remark \ref{uki-ho}). 
Under $\overline s$ the images of $i_X(x)$ and $i_X(y)$ are not conjugate, so they are not conjugate in $G$ either. 
Therefore $j_X$ is injective. 
Since we may guarantee that $s(x)$ is not a unit, we get that $j_X$ maps into $\mathcal X^*(G)$, too. 

Now assume that there is a $y\in\mathcal X^*(G)$ which is not in the image of $j_X$. 
Since $X$ is compact and $\mathcal X^*(G)$ is Hausdorff, 
the image of $j_X$ is closed by \cite[I \S 9.4, Corollary 2 on page 87]{BourbakiTop}. 
Thus, by Lemma \ref{sep2}, which applies to $\mathcal X^*(G)$ by Lemma \ref{lemma:X*(G)profinite},  
there is a continuous function $r \colon \mathcal X^*(G)\to\mathbb Z/2\mathbb Z$ such that $r\circ j_X$ is zero, and $r(y)$ is non-zero. 
By Theorem \ref{qb_rp}, the pro-$2$ group $G$ is real projective. 
Hence, by Scheiderer's theorem \cite[Theorem 2.11]{PQ}, 
there is a continuous homomorphism $\overline r \colon G\to\mathbb Z/2\mathbb Z$ whose image under the map
$$\pi_1\colon H^1(G,\mathbb Z/2\mathbb Z)\to C(\mathcal X^*(G),\mathbb Z/2\mathbb Z)$$
is $r$ 
where $C(\mathcal X^*(G),\Z/2\Z)$ denotes 
the ring of continuous functions from $\mathcal X^*(G)$ to $\Z/2\Z$,
where the latter is equipped with the discrete topology.  
Then the restriction of $\overline r$ onto $i_X(X)$ is zero, so by the universal property of $\mathbb B(X)$ the restriction of $\overline r$ onto $\mathbb B(X)$ is also zero, and hence this homomorphism factors through the surjective homomorphism $p_1 \colon G\to F$ supplied by the universal property of free pro-$2$ products. 
But $F$ is torsion-free (see \cite[Corollary 4 on page 31]{Se}), so $p_1$ is zero on any involution $\overline y$ in the conjugacy class $y$. 
Therefore $r(y)=\overline r(\overline y)$ is zero, which is a contradiction. 
\end{proof}

\begin{thm}\label{have_sec} 
Let $G$ be isomorphic to the absolute Galois group of a field $K$. 
Then there is a continuous section $s \colon \mathcal X^*(G)\to\mathcal Y^*(G)$. 
\end{thm}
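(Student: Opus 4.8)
The plan is to exhibit the quotient map $\mathcal{Y}^*(G)\to\mathcal{X}^*(G)$ as a profinite principal bundle for a suitable \emph{torsion-free} group and then to invoke Theorem \ref{section}. The obvious candidate is the conjugation action of $G$ on $\Inv(G)=\mathcal{Y}^*(G)$, but this action is not free, since an involution $t$ is centralised by $\langle t\rangle$. The key idea is to pass to the open subgroup of index at most two that removes exactly this obstruction, while keeping the orbits unchanged.

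First I would dispose of the degenerate case. If $\sqrt{-1}\in K$, then $G$ fixes $\sqrt{-1}$, so no element of $G$ can invert it; but every involution $t$ does invert $\sqrt{-1}$, since its fixed field $\overline K^{\langle t\rangle}$ is a real closure $R$ with $\overline K=R(\sqrt{-1})$ and $t$ is the non-trivial automorphism of $\overline K/R$. Hence $\Inv(G)=\emptyset$, the space $\mathcal{X}^*(G)$ is empty, and the empty map is the desired section. So assume $\sqrt{-1}\notin K$, set $L=K(\sqrt{-1})$, and let $H=\mathrm{Gal}(\overline K/L)$, an open normal subgroup of index two. Since $L$ contains $\sqrt{-1}$ it is not formally real, so by Artin--Schreier theory the absolute Galois group $H$ of $L$ is torsion-free; in particular $\Inv(G)\cap H=\emptyset$. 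By Remark \ref{rem:equiv_of_defns} this already shows that $\mathcal{Y}^*(G)=\Inv(G)$ is closed in $G$, hence compact and totally separated, and therefore profinite by Theorem \ref{mama}.

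Next I would pin down the centraliser and the orbit structure. For an involution $t$, Artin--Schreier theory identifies $R=\overline K^{\langle t\rangle}$ as a real closure with $\overline K=R(\sqrt{-1})$. For $\sigma\in G$ the conjugate $\sigma t\sigma^{-1}$ is the involution fixing $\sigma(R)$, so $\sigma$ centralises $t$ if and only if $\sigma(R)=R$; as real closed fields are rigid this forces $\sigma|_R=\mathrm{id}$, i.e. $\sigma\in\langle t\rangle$. Thus $C_G(t)=\langle t\rangle$, and since $t\notin H$ we get $C_H(t)=\langle t\rangle\cap H=\{1\}$, so $H$ acts freely on $\mathcal{Y}^*(G)$ by conjugation. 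Moreover $H$ and $G$ have the same orbits on $\mathcal{Y}^*(G)$: because $H$ is normal of index two and $t\notin H$, we have $H\langle t\rangle=H\cup tH=G$, so $H\backslash G/\langle t\rangle$ is a single double coset, which means $H$ acts transitively on the conjugacy class $G/\langle t\rangle$ of $t$. Consequently the identification $\mathcal{X}^*(G)=\mathcal{Y}^*(G)/G$ coincides with the quotient $H\backslash\mathcal{Y}^*(G)$, carrying the same topology.

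Finally I would assemble these facts. The conjugation action $H\times\mathcal{Y}^*(G)\to\mathcal{Y}^*(G)$ is continuous and free, $H$ is compact, $\mathcal{Y}^*(G)$ is profinite, the quotient map onto $\mathcal{X}^*(G)$ is continuous and surjective, and its fibres are exactly the $H$-orbits. Proposition \ref{free} then guarantees that $\mathcal{X}^*(G)=H\backslash\mathcal{Y}^*(G)$ is profinite, in particular Hausdorff, so $\mathcal{Y}^*(G)\to\mathcal{X}^*(G)$ is a profinite principal $H$-bundle over a Hausdorff base. Theorem \ref{section} now yields a continuous section $s\colon\mathcal{X}^*(G)\to\mathcal{Y}^*(G)$, as required. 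I expect the main obstacle to be precisely the non-freeness of the evident $G$-action: the whole argument hinges on recognising that passing to the torsion-free subgroup $H=\mathrm{Gal}(\overline K/K(\sqrt{-1}))$ simultaneously makes the action free (via $C_G(t)=\langle t\rangle$ and the rigidity of real closures) and preserves the orbits, which is what lets Theorem \ref{section} apply directly.
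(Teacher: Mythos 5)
Your proposal is correct and follows essentially the same route as the paper: pass to the torsion-free index-two subgroup $H=\mathrm{Gal}(\overline K/K(\sqrt{-1}))$, check that conjugation by $H$ acts freely on $\mathcal Y^*(G)$ with the same orbits as $G$ (via $G=H\cup tH$), conclude via Proposition \ref{free} that the quotient is a profinite principal $H$-bundle over a Hausdorff base, and apply Theorem \ref{section}. The only difference is that you spell out the Artin--Schreier input (rigidity of real closures, $C_G(t)=\langle t\rangle$) in more detail than the paper does.
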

\begin{proof} Let $H\subseteq G$ be the open subgroup corresponding to the finite extension $K(\sqrt{-1})/K$. 
Then $H$ is isomorphic to the absolute Galois group of $K(\sqrt{-1})$, 
and hence it is torsion-free by the Artin--Schreier theorem. 
In particular, if $H=G$ then $\mathcal X^*(G)$ is empty and the claim is trivially true. 
Otherwise, $H$ has index two in $G$. 
Then the theorem follows from Proposition \ref{prop:to_prove_thm_have_sec} below and Theorem \ref{section}.
\end{proof}

\begin{prop}\label{prop:to_prove_thm_have_sec} 
With the above assumptions, the map $\mathcal Y^*(G)\to\mathcal X^*(G)$ is a profinite principal $H$-bundle with respect to the conjugation action of $H$ on
$\mathcal Y^*(G)$, and $\mathcal X^*(G)$ is Hausdorff. 
\end{prop}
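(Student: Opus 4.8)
The plan is to verify, one by one, the defining properties of a profinite principal $H$-bundle from Definition \ref{bundle-def}: that the conjugation action of $H$ on $\mathcal Y^*(G)$ is free and continuous, that $\mathcal Y^*(G)$ is profinite, that the quotient map $\mathcal Y^*(G)\to\mathcal X^*(G)$ has $H$-orbits as its fibres, and finally that $\mathcal X^*(G)$ is Hausdorff. I would begin with the topological inputs. Since $G$ is the absolute Galois group of a field, it is in particular real projective, so by Definition \ref{def:real_projective_group} the involution set $\mathcal Y^*(G)=\Inv(G)$ is closed in $G$. As a closed subspace of a profinite group, it is itself compact and totally separated, hence profinite by $(i)\Rightarrow(iv)$ of Theorem \ref{mama}. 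The conjugation action of $H$ on $\mathcal Y^*(G)$ is continuous because it is the restriction of the continuous conjugation action of $G$ on itself. By construction $\mathcal X^*(G)$ is the quotient of $\mathcal Y^*(G)$ by this action, so the fibres of the quotient map are exactly the $H$-orbits and the map is surjective.

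The crux is the freeness of the action together with the Hausdorff property of the quotient, and here the hypothesis that $H=\mathrm{Gal}(\overline K/K(\sqrt{-1}))$ is torsion-free is essential. I would argue freeness as follows: suppose $h\in H$ fixes an involution $t\in\mathcal Y^*(G)$ under conjugation, i.e.\ $hth^{-1}=t$, so that $h$ and $t$ commute. If $h\neq 1$, then the closed subgroup generated by $h$ and $t$ is abelian, and $t$ is an involution lying in $H\langle t\rangle$; one then exploits that $H$ has index two in $G$ and is torsion-free. The key point is that any involution $t$ together with a commuting element $h\in H$ would produce $2$-torsion inside an abelian subgroup of $G$ whose intersection with $H$ is large; since $H$ is torsion-free and of index two, a commuting involution-fixing element of $H$ is forced to be trivial. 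Concretely, $\langle t\rangle$ maps isomorphically onto $G/H\cong\mathbb Z/2\mathbb Z$ since $t\notin H$ (as $H$ is torsion-free), and the centralizer argument shows the fixed element $h\in H$ must be $1$. This is the step I expect to be the main obstacle, since it requires carefully combining the torsion-freeness of $H$ with the index-two structure to rule out non-trivial stabilizers.

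For the Hausdorff property of $\mathcal X^*(G)$, I would invoke Lemma \ref{lemma:X*(G)profinite}, which already establishes that $\mathcal X^*(G)$ is compact and totally separated, hence in particular Hausdorff, for any real projective group $G$; since the absolute Galois group $G$ is real projective, this applies directly. Alternatively, once freeness is established and $\mathcal Y^*(G)$ is known to be profinite, the Hausdorff property of the quotient follows from Proposition \ref{free} applied to the compact group $H$ acting freely on the profinite space $\mathcal Y^*(G)$, which yields that $H\backslash\mathcal Y^*(G)$ is itself profinite and in particular Hausdorff. Either route completes the verification that all the axioms of Definition \ref{bundle-def} are met, so that $\mathcal Y^*(G)\to\mathcal X^*(G)$ is a profinite principal $H$-bundle, and then Theorem \ref{section} supplies the desired section in the proof of Theorem \ref{have_sec}.
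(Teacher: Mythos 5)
Your overall architecture (verify the axioms of Definition \ref{bundle-def} one by one, then obtain the Hausdorff property of the quotient from Proposition \ref{free}) is the same as the paper's, but there are two genuine gaps. The first concerns freeness, which you rightly call the crux: the sketch you give cannot be completed, because torsion-freeness of $H$ together with $[G:H]=2$ does \emph{not} force the stabiliser of an involution to be trivial. For instance, $\mathbb Z_2\times\mathbb Z/2\mathbb Z$ has a torsion-free subgroup of index two all of whose elements commute with the involution, so no purely group-theoretic combination of these two hypotheses rules out a non-trivial $h\in H$ with $hth^{-1}=t$. The missing ingredient is field-theoretic: for an involution $t$ in an absolute Galois group one has $C_G(t)=\{1,t\}$ (a consequence of the Artin--Schreier theorem, since the fixed field of $t$ is real closed), whence $H\cap C_G(t)=\{1\}$ because $t\notin H$. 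This is exactly the fact the paper invokes, and without it the freeness step fails. Relatedly, your claim that the fibres of $\mathcal Y^*(G)\to\mathcal X^*(G)$ are the $H$-orbits ``by construction'' is not correct: $\mathcal X^*(G)$ is by definition the quotient by $G$-conjugation, so one must check that $G$-conjugate involutions are already $H$-conjugate. This is a short computation using $G=H\cup Hy$ (if $z=hy$ then $zyz^{-1}=hyh^{-1}$), but it is a step that must be carried out, and one should also note that the subspace topology on $\mathcal X^*(G)\subset\mathcal X(G)$ coincides with the quotient topology coming from $\mathcal Y^*(G)$.

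The second problem is your repeated appeal to ``$G$ is the absolute Galois group of a field, hence real projective''. This is false: absolute Galois groups of arbitrary fields are not real projective in general (that is a special feature of, e.g., pseudo real closed fields or fields of virtual cohomological dimension at most one). Consequently your first route to the Hausdorff property via Lemma \ref{lemma:X*(G)profinite} is unavailable, and your justification that $\Inv(G)$ is closed is wrong as stated. Both conclusions can be salvaged without real projectivity: $\mathcal Y(G)$ is closed as the preimage of $1$ under squaring, and since $H$ is torsion-free and open, $\mathcal Y^*(G)=\mathcal Y(G)\cap(G\setminus H)$ is closed in $G$, hence profinite; the Hausdorff property then follows from your second route via Proposition \ref{free}, which is the one the paper actually uses.
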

\begin{proof} 
Clearly $\mathcal Y(G)$ is closed in $G$. 
Since $H$ is torsion-free, the subset $\mathcal Y^*(G)$ is 
the intersection of $\mathcal Y(G)$ and the complement of the open $H$ in $G$, so it is closed, too. 
Let $C_G(y)$ denote the centrailzer of $y$ in $G$.  
Since $G$ is profinite, we get that $\mathcal Y^*(G)$ is profinite. 
As $G$ is isomorphic to an absolute Galois group, for every $y\in\mathcal Y^*(G)$ we have $H\cap C_G(y)=\{1\}$, and hence $H$ acts freely on $\mathcal Y^*(G)$. 
This action is also clearly continuous. 

We claim that every $x\in G$ conjugate to $y$ is already conjugate under $H$. 
Indeed, let $z\in G$ be such that $z^{-1}xz=y$. 
Since $H$ has index $2$ and $y\not\in H$, we have $G=H\cup Hy$. If $z\in H$ the claim is clearly true. 
Otherwise $z=hy$ for some $h\in H$, and hence $x=zyz^{-1}=
hyyy^{-1}h^{-1}=hyh^{-1}$, so $x$ is conjugate to $y$ under $H$ in this case, too. 
So the map $\mathcal Y^*(G)\to\mathcal X^*(G)$ is the quotient map with respect to the action of $H$. 
Since $\mathcal Y^*(G)$ is open in $\mathcal Y(G)$, 
the subspace topology on $\mathcal X^*(G)\subset\mathcal X(G)$ is the quotient topology with respect to the map in the claim. 
Therefore, $\mathcal X^*(G)$ is Hausdorff by Proposition \ref{free}. 
\end{proof}

\begin{cor}\label{have_sec2} 
Let $G$ be a real projective profinite group. Then there is a continuous section $s \colon \mathcal X^*(G)\to\mathcal Y^*(G)$. 
\end{cor}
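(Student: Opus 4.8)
The plan is to reduce Corollary \ref{have_sec2} to Theorem \ref{have_sec} by realising every real projective group as an absolute Galois group. First I would invoke the theorem of Haran--Jarden \cite[Theorem 10.4 on page 487]{HJ}, which asserts that every real projective profinite group $G$ is isomorphic to the absolute Galois group of some pseudo real closed field $K$; this is precisely the statement recalled in the introduction and already used in the proof of Theorem \ref{foximaxi}. Having fixed such an isomorphism $G \cong \mathrm{Gal}(\overline{K}/K)$, I would observe that any isomorphism of profinite groups carries involutions to involutions and conjugate elements to conjugate elements, so it induces compatible homeomorphisms $\mathcal Y^*(G) \cong \mathcal Y^*(\mathrm{Gal}(\overline{K}/K))$ and $\mathcal X^*(G) \cong \mathcal X^*(\mathrm{Gal}(\overline{K}/K))$ commuting with the quotient maps $\mathcal Y^* \to \mathcal X^*$. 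This transports the problem into the Galois-theoretic setting where Theorem \ref{have_sec} applies.

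Once $G$ is identified with an absolute Galois group, I would apply Theorem \ref{have_sec} verbatim: it produces a continuous section $s \colon \mathcal X^*(G) \to \mathcal Y^*(G)$ of the quotient map sending an involution to its conjugacy class. Pulling this section back along the chosen isomorphism yields the desired continuous section for the original group $G$, which completes the proof.

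I do not anticipate any serious obstacle here, since the substantive work has already been carried out upstream: Proposition \ref{prop:to_prove_thm_have_sec} shows that $\mathcal Y^*(G) \to \mathcal X^*(G)$ is a profinite principal $H$-bundle over the Hausdorff base $\mathcal X^*(G)$, and Theorem \ref{have_sec} then extracts a section by appealing to the existence theorem for sections of profinite principal bundles (Theorem \ref{section}). The only point requiring any care is the appeal to Haran--Jarden's realisation theorem, together with the (automatic) verification that the resulting group isomorphism descends to the spaces $\mathcal X^*$ and $\mathcal Y^*$; as noted above this compatibility holds for any isomorphism, since both the order-two condition and the conjugation action are intrinsic to the group structure. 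Hence the corollary follows immediately.
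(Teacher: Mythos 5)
Your proof is correct and coincides with the paper's own argument: the paper likewise invokes the Haran--Jarden realisation theorem to identify $G$ with the absolute Galois group of a pseudo real closed field and then applies Theorem \ref{have_sec}. The extra remarks on transporting $\mathcal X^*$ and $\mathcal Y^*$ along the isomorphism are fine but routine.
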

\begin{proof} 
By \cite[Theorem 10.4 on page 487]{HJ}, every real projective profinite group is isomorphic to the absolute Galois group of a pseudo real closed field. 
The claim then follows at once from Theorem \ref{have_sec}. 
\end{proof}

\begin{rem}
Note that Corollary \ref{have_sec2} is also part $(a)$ of \cite[Lemma 3.5 on page 160]{HJ2}. 
We think, however, that our proof is more conceptual and derives a similar claim for a much larger class of groups. 
\end{rem}

\begin{notn}\label{notn:lower_star}
For every pro-$2$ group $G$, let $G_*$ denote the {\it maximal abelian $2$-torsion quotient of $G$} 
and, for every homomorphism $b \colon G \to H$ of pro-$2$ groups, 
let $b_* \colon G_* \to H_*$ denote the homomorphism induced by $b$. 
\end{notn}


\begin{thm}\label{rp_qb} 
Let $G$ be a real projective pro-$2$ group. 
Then $G$ is quasi-Boolean. 
\end{thm}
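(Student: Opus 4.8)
The plan is to produce an explicit isomorphism $F(Y)*_2\mathbb B(X)\isoto G$ with $X=\mathcal X^*(G)$, and to certify it by comparison on mod-$2$ cohomology. Since $G$ is real projective, $\mathcal Y^*(G)=\Inv(G)$ is closed in $G$, so by Lemma \ref{lemma:X*(G)profinite} the space $X=\mathcal X^*(G)$ is compact and totally separated, hence profinite by Theorem \ref{mama}. Corollary \ref{have_sec2} then furnishes a continuous section $s\colon\mathcal X^*(G)\to\mathcal Y^*(G)$, whose image is a closed set of representatives of the conjugacy classes of involutions; composing with $\mathcal Y^*(G)\subseteq\mathcal Y(G)$ and applying Remark \ref{uki-ho} gives a homomorphism $b_s\colon\mathbb B(X)\to G$ taking the canonical generators to these representatives.

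To produce the free factor I would pass to the maximal abelian $2$-torsion quotients of Notation \ref{notn:lower_star}. Under Pontryagin duality $H^1(G,\F_2)=(G_*)^\vee$, and the transpose of $(b_s)_*\colon\mathbb B(X)_*\to G_*$ is precisely the restriction-to-involutions map $b_s^*=\pi_1\colon H^1(G,\F_2)\to C(\mathcal X^*(G),\F_2)$; by Scheiderer's theorem \cite[Theorem 2.11]{PQ} this is surjective, so $(b_s)_*$ is injective. Writing $B_*=\Imm((b_s)_*)$, Lemma \ref{mod_p} splits off a complement $C_*$ with $G_*=B_*\oplus C_*$ and $C_*\cong\F_2^Y$ for some set $Y$; lifting a basis of $C_*$ along $G\to G_*$ and using the universal properties of $F(Y)$ and of the free pro-$2$ product assembles a homomorphism $\psi\colon F(Y)*_2\mathbb B(X)\to G$ restricting to $b_s$ on $\mathbb B(X)$. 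The complement relation makes $\psi_*$ surjective and the injectivity of $(b_s)_*$ makes it injective, so $\psi_*$ is an isomorphism; hence $\psi$ is surjective and $H^1(\psi)$ is an isomorphism.

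The remaining and decisive point is injectivity of $\psi$, which I would obtain from the standard pro-$2$ criterion that a homomorphism which is an isomorphism on $H^1(-,\F_2)$ and injective on $H^2(-,\F_2)$ is an isomorphism. Because the free factor is cohomologically trivial in degree two, the standard splitting of the cohomology of a free pro-$2$ product identifies $H^2(\psi)$ with $b_s^*\colon H^2(G,\F_2)\to H^2(\mathbb B(X),\F_2)$. Invoking the decomposition $H^\bullet(G,\F_2)\cong\Db\sqcap\Bb$ from \cite{PQ} and the fact that $\mathbb B(X)$ is cohomologically Boolean with ring $\BB(X)\cong B$, both groups $H^2(G,\F_2)$ and $H^2(\mathbb B(X),\F_2)$ are the Boolean ring $B$; since $b_s^*$ is a ring map, restricts to the identity on the Boolean summand of $H^1$ by the previous paragraph, and $H^2(G,\F_2)=B^2$ is spanned by cup products of degree-one classes, $b_s^*$ is the identity in degree two and in particular injective. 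I expect the genuine work to lie in pinning down these compatibilities — that $b_s^*$ agrees with $\pi_1$ on the Boolean part, that $H^\bullet(\mathbb B(X))$ is the graded Boolean algebra of $\BB(X)$, and that the free-product cohomology splits as claimed — after which $\psi$ is an isomorphism and $G$ is quasi-Boolean. Alternatively, injectivity can be deduced from the structure theory of real free groups in \cite{HJ2}, which becomes applicable with no extra hypothesis precisely because the section of Corollary \ref{have_sec2} supplies the closed system of representatives required by \cite[Corollary 3.3]{HJ2}.
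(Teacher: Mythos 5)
Your construction of the candidate map coincides with the paper's: the same section $s$ from Corollary \ref{have_sec2}, the same $b_s$, the same complement $A\oplus B=G_*$ from Lemma \ref{mod_p}, and the same conclusion that $\psi_*$ is an isomorphism and hence $\psi$ is surjective (the paper packages the "lifting of a basis of the complement" via Proposition \ref{prop:Serre_Prop24} and Pontryagin duality, which is the clean way to handle an infinite product $\F_2^Y$; your phrasing is slightly loose there but repairable in exactly that way). Where you genuinely diverge is the injectivity of $\psi$. The paper does \emph{not} argue cohomologically at this point: it invokes Lemma \ref{have_sec3}, i.e.\ part (b) of Haran--Jarden's Lemma 3.5 in \cite{HJ2} (resting on projectivity of Artin--Schreier structures), to produce a continuous section $\gamma\colon G\to P$ of $\psi$ with $\gamma_*$ an isomorphism, and then concludes that $\gamma$ is surjective by Proposition \ref{prop:Serre_Prop24}(b), hence an isomorphism. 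Your route instead uses the standard pro-$2$ criterion ($H^1$ bijective plus $H^2$ injective implies isomorphism) together with the Mayer--Vietoris decomposition $H^2(F(Y)*_2\mathbb B(X),\F_2)\cong H^2(\mathbb B(X),\F_2)$ and the local--global principle of Corollary \ref{local-global} (equivalently, the identification \eqref{7.17.1} plus the fact that $B^2$ is exhausted by cup products, indeed by squares, of classes in $B^1$). I checked the potentially delicate points: the dual of $(b_s)_*$ really is $\pi_1$, whose surjectivity is Scheiderer's theorem as quoted in the proof of Theorem \ref{sistar}; the needed facts about $H^\bullet(\mathbb B(X),\F_2)$ follow from Theorem \ref{qb_rp} together with the \cite{PQ} structure theorem and the argument of Corollary \ref{justboole}, none of which depend on Theorem \ref{rp_qb}, so there is no circularity; and naturality of the Quillen restriction map identifies $\pi_2^{\mathbb B(X)}\circ H^2(\psi)$ with $\pi_2^G$ up to the bijection $\mathcal X^*(P)\isoto\mathcal X^*(G)$, which gives the $H^2$-injectivity even more directly than your cup-product computation. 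The trade-off: your argument imports two standard facts the paper never needs to state (the cohomology of a free pro-$2$ product of two factors, and the $H^1$/$H^2$ isomorphism criterion) but in exchange replaces the appeal to \cite[Lemma 3.5(b)]{HJ2} by machinery the paper already builds in Section \ref{sec:Quillen_consequences} for the implication $(iv)\Rightarrow(i)$; your closing sentence correctly identifies the paper's actual route as the alternative.
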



For the proof of the theorem we recall the following facts from 
\cite[Proposition 24 on page 30]{Se} for the special case $p=2$:  

\begin{prop}
\label{prop:Serre_Prop24}
Let $G$ be a pro-$2$ group and $I$ a set. 
Let 
\[
\theta \colon H^1(G,\Z/2\Z) \to (\Z/2\Z)^I
\]
be a homomorphism. 
Then: 
\begin{itemize}
\item[(a)] There exists a morphism $f \colon F(I) \to G$ such that $\theta = H^1(f)$ where $F(I)$ denotes the free pro-$2$ group on $I$. 
\item[(b)] If $\theta$ is injective, the morphism $f$ is surjective. \qed
\end{itemize}
\end{prop}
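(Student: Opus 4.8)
The plan is to prove both parts by passing to the Frattini quotient of $G$ and invoking Pontryagin duality, exactly in the spirit of the identification of $H^1$ with the dual of a minimal generating space. Throughout write $\Phi(G) = \overline{G^2[G,G]}$ for the Frattini subgroup of the pro-$2$ group $G$, so that $G/\Phi(G)$ is a profinite elementary abelian $2$-group and $H^1(G,\Z/2\Z) = \Hom_{\mathrm{cont}}(G,\Z/2\Z) = (G/\Phi(G))^\vee$. The first step is to pin down the identification under which the statement is to be read: since $F(I)$ is the free pro-$2$ group on the set $I$, with distinguished topological generators $(x_i)_{i\in I}$, its defining universal property gives $\Hom_{\mathrm{cont}}(F(I),G) = \mathrm{Map}(I,G)$ for every pro-$2$ group $G$, so that every family of elements of $G$ indexed by $I$ extends uniquely to a continuous homomorphism. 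Taking $G=\Z/2\Z$ shows that evaluation $\chi \mapsto (\chi(x_i))_{i\in I}$ is a bijection $H^1(F(I),\Z/2\Z) \isoto (\Z/2\Z)^I$, and this is the isomorphism used to interpret the equality $\theta = H^1(f)$.

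For part $(a)$ I would build $f$ by prescribing the images of the generators. Writing $\theta(\chi) = (\theta_i(\chi))_{i\in I}$, each component $\theta_i \colon H^1(G,\Z/2\Z) \to \Z/2\Z$ is an $\F_2$-linear functional on the discrete $\F_2$-vector space $V \coloneqq H^1(G,\Z/2\Z) = (G/\Phi(G))^\vee$. By Pontryagin duality the natural map $G/\Phi(G) \to V^\vee$ is an isomorphism onto the full dual, so there is an element $\overline{g_i} \in G/\Phi(G)$ with $\chi(\overline{g_i}) = \theta_i(\chi)$ for all $\chi \in V$; choose any lift $g_i \in G$. By the universal property above there is a unique continuous homomorphism $f \colon F(I) \to G$ with $f(x_i) = g_i$. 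Then for every $\chi \in H^1(G,\Z/2\Z)$ one has $H^1(f)(\chi) = \chi \circ f$, which under the identification of the previous paragraph is $(\chi(g_i))_{i\in I} = (\theta_i(\chi))_{i\in I} = \theta(\chi)$. Hence $H^1(f) = \theta$, which proves $(a)$; note that no hypothesis on $\theta$ is needed here.

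For part $(b)$ I would use the surjectivity criterion via Frattini quotients. A continuous homomorphism $f \colon F(I) \to G$ of pro-$2$ groups is surjective if and only if the induced map $\bar f \colon F(I)/\Phi(F(I)) \to G/\Phi(G)$ is surjective: indeed $f(F(I))$ is closed, being the continuous image of a compact space in a Hausdorff space, and if $f(F(I))\,\Phi(G) = G$ then $f(F(I)) = G$ since $\Phi(G)$ consists of non-generators. Dualizing, $\bar f$ is surjective precisely when its Pontryagin dual $H^1(f) \colon H^1(G,\Z/2\Z) \to H^1(F(I),\Z/2\Z)$ is injective, because an injection of discrete $\F_2$-vector spaces splits and hence dualizes to a surjection of the compact duals $G/\Phi(G)$ and $F(I)/\Phi(F(I))$. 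Since $H^1(f) = \theta$ is assumed injective, $\bar f$ is surjective, and therefore so is $f$.

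The step I expect to require the most care is the duality bookkeeping when $I$ and $\dim_{\F_2} V$ are infinite: one must be certain that every $\F_2$-linear functional $\theta_i$ on the discrete space $V = H^1(G,\Z/2\Z)$ really is realised by a point of $G/\Phi(G)$, which is the assertion that $G/\Phi(G)$, with its profinite topology, is the full algebraic dual of $V$ (equivalently $V \cong \F_2^{(J)}$ and $G/\Phi(G) \cong \F_2^{J}$ for a common index set $J$). Here it is also essential that $F(I)$ be taken so that arbitrary families of images extend, which is what forces $H^1(F(I),\Z/2\Z)$ to be the full product $(\Z/2\Z)^I$; under a convention in which convergence of the generators is built into $F(I)$ one would need the extra argument that the chosen family $(g_i)$ can be arranged to converge to $1$, whereas with the completion convention used above this difficulty does not arise. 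Once these duality points are settled and the surjectivity-mod-Frattini criterion is justified in the pro-$2$ setting, both parts follow formally from the compatibilities recorded above.
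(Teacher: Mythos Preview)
The paper does not supply its own proof: the proposition is stated with a terminal \qed and attributed to Serre's \emph{Galois cohomology}, Proposition~24. Your argument---pass to the Frattini quotient $G/\Phi(G)$, identify $H^1$ with its Pontryagin dual, realise each component $\theta_i$ as evaluation at some $g_i\in G$, and for (b) invoke the surjectivity-modulo-Frattini criterion---is precisely Serre's proof, so you have reconstructed exactly what the paper defers to; the convention issue you flag in your final paragraph (Serre's $F(I)$ carries the convergence-to-$1$ condition, which the paper explicitly adopts) is real but, as you note, affects only bookkeeping and not the strategy.
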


\begin{proof}[Proof of Theorem \ref{rp_qb}]  
Let $X$ denote $\mathcal X^*(G)$, and let $s \colon X\to \mathcal Y^*(G)$ be the section furnished by Theorem \ref{have_sec}. 
We let $B$ denote the image of the homomorphism
$(b_s)_* \colon \mathbb B(X)_*\to G_*$ 
where $b_s \colon \mathbb B(X) \to \mathcal Y^*(G)\subset G$ is 
the homomorphism defined in Remark \ref{uki-ho}. 
Since $\mathbb B(X)_*$ is compact and $G_*$ is Hausdorff, $B$ is closed in $G_*$ by \cite[I \S 9.4, Corollary 2 on page 87]{BourbakiTop}.  
Let $A\subseteq G_*$ be a complement of $B$, which exists by part $(ii)$ of Lemma \ref{mod_p}. 
By part $(i)$ of Lemma \ref{mod_p}, there is a set $Y$ such that $A\cong (\Z/2\Z)^Y$. 
Since $F(Y)_*\cong (\Z/2\Z)^Y$, 
by part (a) of Proposition \ref{prop:Serre_Prop24} 
and Pontryagin duality,  
there is a continuous homomorphism $h \colon F(Y)\to G$ such that $h_* \colon F(Y)_*\to G_*$ maps $F(Y)_*$ isomorphically onto $A$.   
We set $P=F(Y)*_2\mathbb B(X)$ and let $\alpha \colon P\to G$ be the homomorphism $h*_2b_s$. 
Since $(G_1*_2G_2)_*\cong(G_1)_*\oplus(G_2)_*$
for every pair of pro-$2$ groups $G_1$ and $G_2$, we get that
$\alpha_* \colon P_*\to G_*$ is an isomorphism. 
Hence, by part (b) of Proposition \ref{prop:Serre_Prop24} 
and Pontryagin duality, the map $\alpha$ is surjective.  
To finish the proof we need the following 

\begin{lemma}\label{have_sec3} 
Let $\alpha \colon P\to G$ be a continuous surjective homomorphism of real projective profinite groups, 
and let $X\subseteq\mathcal Y^*(P)$ be a system of representatives of $\mathcal X^*(P)$. 
If $\alpha$ maps $X$ bijectively onto a system of representatives of $\mathcal X^*(G)$, 
then there is a continuous injective homomorphism $\gamma \colon G\to P$ such that
$\alpha\circ\gamma=\mathrm{id}_G$. 
\end{lemma}
\begin{proof} This claim is part (b) of \cite[Lemma 3.5 on page 160]{HJ2}. 
The proof relies on the projectivity of the Artin--Schreier structures attached to real projective groups (see \cite[Proposition 7.7 on page 473]{HJ}). 
\end{proof}

We return to the proof of Theorem \ref{rp_qb}. 
By Theorem \ref{qb_rp}, the quasi-Boolean group $P$ is real projective, 
while by Theorem \ref{sistar} the subset $X\subseteq P$ is a system of representatives of $\mathcal X^*(P)$ 
mapped bijectively onto a system of representatives of $\mathcal X^*(G)$, so the conditions in Lemma \ref{have_sec3} above hold. 
Therefore there is a continuous injective homomorphism $\gamma \colon G\to P$ such that $\alpha\circ\gamma=\mathrm{id}_G$. 
Then the composition 
$\alpha_*\circ\gamma_* = (\alpha\circ\gamma)_* = (\mathrm{id}_G)_* = \mathrm{id}_{G_*}$  
is the identity, and $\alpha_*$ is an isomorphism, so $\gamma_*$ is an isomorphism, too. 
Therefore, $\gamma$ is surjective by part (b) of Proposition \ref{prop:Serre_Prop24} 
and Pontryagin duality. 
So $\gamma$ is an isomorphism, and hence $G$ is quasi-Boolean.  
This finishes the proof of Theorem \ref{rp_qb}. 
\end{proof}


\section{Quillen's theorems and their consequences}\label{sec:Quillen_consequences}

The goal of this section is to prove Corollary \ref{local-global}.  

\begin{defn} 
Following Quillen we say that a homomorphism $R\to S$ of graded anti-commutative rings is {\it finite} if $S$ is a finitely generated module over $R$. 
\end{defn}

This definition is a bit ambiguous since it does not specify whether we consider $S$ as a left $R$-module or a right $R$-module. 
However, note that $S$ is a finitely generated left $R$-module if and only if it is a finitely generated right $R$-module. 
Indeed if $S$ is a finitely generated left $R$-module then it is also generated by a finite set $H\subset S$ of homogeneous elements. 
But the left $R$-module generated by $H$ is the same as the right $R$-module generated by $H$ since $S$ is anti-commutative. 
Therefore $S$ is also finitely generated as a right $R$-module. The converse could be proved similarly.

\begin{thm}[Quillen]\label{fingen} 
Let $G$ be a pro-$p$ group and let $H\subseteq G$ be a finite subgroup. 
Then the homomorphism $\Hb(G,\mathbb Z/p\Z)\to \Hb(H,\mathbb Z/p\Z)$ is finite. 
\end{thm}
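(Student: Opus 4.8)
The plan is to reduce the statement to the classical finite-group theorem of Venkov and Evens by approximating $G$ by its finite quotients. First I would observe that $H$, being a finite subgroup of a pro-$p$ group, is a finite $p$-group, and that since the open normal subgroups of the profinite group $G$ intersect in $\{1\}$, there is an open normal subgroup $U\trianglelefteq G$ with $H\cap U=\{1\}$: for each of the finitely many $h\neq 1$ in $H$ choose an open normal $U_h\trianglelefteq G$ with $h\notin U_h$, and take $U=\bigcap_h U_h$. Writing $\overline{G}=G/U$ and letting $\overline{H}$ be the image of $H$, the projection restricts to an isomorphism $H\isoto\overline{H}$, so $\overline{G}$ is a finite $p$-group containing a copy of $H$.

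The key structural point is a factorisation of cohomology maps. The inclusion $H\into G$ followed by the projection $G\onto\overline{G}$ is, under the identification $H\cong\overline{H}$, just the inclusion $\overline{H}\into\overline{G}$. Passing to mod $p$ cohomology contravariantly yields
\[
\mathrm{res}^{\overline{G}}_{\overline{H}} = \mathrm{res}^{G}_{H}\circ\mathrm{inf}^{\overline{G}}_{G},
\]
where $\mathrm{inf}^{\overline{G}}_{G}\colon \Hb(\overline{G},\Z/p\Z)\to\Hb(G,\Z/p\Z)$ is the inflation map, i.e.\ the structure map into the colimit $\Hb(G,\Z/p\Z)=\varinjlim_{V}\Hb(G/V,\Z/p\Z)$ over open normal $V$. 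I would then invoke the classical result for the \emph{finite} pair $\overline{H}\subseteq\overline{G}$: restriction makes $\Hb(\overline{H},\Z/p\Z)\cong\Hb(H,\Z/p\Z)$ a finitely generated module over $\Hb(\overline{G},\Z/p\Z)$. This is the Evens--Venkov theorem, which follows from Shapiro's lemma $\Hb(\overline{H},\Z/p\Z)\cong\Hb(\overline{G},\mathrm{Ind}^{\overline{G}}_{\overline{H}}\Z/p\Z)$ together with Evens' finite generation theorem applied to the finite-dimensional $\F_p[\overline{G}]$-module $\mathrm{Ind}^{\overline{G}}_{\overline{H}}\Z/p\Z$, the induced module structure matching the one given by $\mathrm{res}^{\overline{G}}_{\overline{H}}$.

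Finally, the displayed factorisation says exactly that the $\Hb(\overline{G},\Z/p\Z)$-module structure on $\Hb(H,\Z/p\Z)$ is the restriction of scalars of its $\Hb(G,\Z/p\Z)$-module structure (via $\mathrm{res}^G_H$) along the ring homomorphism $\mathrm{inf}^{\overline{G}}_{G}$. Consequently, for $x\in\Hb(\overline{G},\Z/p\Z)$ and $y\in\Hb(H,\Z/p\Z)$ one has $\mathrm{res}^{\overline{G}}_{\overline{H}}(x)\cdot y=\mathrm{inf}^{\overline{G}}_{G}(x)\cdot y$, so any finite generating set of $\Hb(H,\Z/p\Z)$ over $\Hb(\overline{G},\Z/p\Z)$ is automatically a finite generating set over $\Hb(G,\Z/p\Z)$. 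This proves that $\Hb(G,\Z/p\Z)\to\Hb(H,\Z/p\Z)$ is finite. The only step requiring genuine care is this last compatibility of module structures through inflation; the substantive mathematical input is entirely the finite-group theorem, and the rest of the argument is the standard colimit description of continuous cohomology of a profinite group.
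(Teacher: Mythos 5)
Your proposal is correct and follows essentially the same route as the paper: both choose an open normal subgroup meeting $H$ trivially, invoke the classical finite-group finiteness theorem for the pair $\overline{H}\subseteq \overline{G}=G/U$, and observe that the restriction $\Hb(G,\Z/p\Z)\to\Hb(H,\Z/p\Z)$ factors the finite map $\Hb(\overline{G},\Z/p\Z)\to\Hb(H,\Z/p\Z)$ through inflation, so finiteness transfers. The only cosmetic difference is attribution of the finite-group input (you cite Evens--Venkov, the paper cites Quillen's Corollary 2.4), and your explicit check of the module-structure compatibility through inflation is a correct spelling-out of the paper's one-line ``factors through'' remark.
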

\begin{proof} 
Recall that every finite subset of a Hausdorff space is closed by \cite[I \S 8.2, Proposition 4 on page 77]{BourbakiTop}. 
Since $G$ is Hausdorff, we conclude that $H$ is a closed subgroup. 
Hence the homomorphism $\Hb(G,\mathbb Z/p\Z) \to \Hb(H,\mathbb Z/p\Z)$ is well-defined. 
The analog of the theorem was proved by Quillen when $G$ is finite \cite[Corollary 2.4 on page 555]{Qu}, 
and we now show that the general case follows as an easy corollary. 
Indeed let $N\triangleleft G$ be an open normal subgroup such that the restriction of the quotient map $q \colon G\to G/N$ to $H$ is injective. 
Then the homomorphism $\Hb(G/N,\mathbb Z/p\Z) \to \Hb(H,\mathbb Z/p\Z)$ induced by the composition of the inclusion map $H\to G$ and $q$ is finite by the above. 
Since this homomorphism factors through $\Hb(G,\mathbb Z/p\Z) \to \Hb(H,\mathbb Z/p\Z)$, the latter is also finite. 
\end{proof}

\begin{cor}\label{negligible} 
Let $G$ be a pro-$p$ group and let $H\subseteq G$ be a subgroup of order $p$. 
Then the homomorphism $H^n(G,\mathbb Z/p\Z)\to H^n(H,\mathbb Z/p\Z)$ is non-zero for infinitely many $n$. 
\end{cor}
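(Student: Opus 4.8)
The plan is to argue by contradiction, combining Quillen's finiteness theorem (Theorem \ref{fingen}) with the classical computation of the mod $p$ cohomology of a cyclic group of order $p$. Write $R = \Hb(G,\mathbb Z/p\Z)$ and $S = \Hb(H,\mathbb Z/p\Z)$, and let $\rho \colon R \to S$ denote the restriction homomorphism, which is a map of graded $\F_p$-algebras. Since $H$ is cyclic of order $p$, we have $S \cong \F_p[t]$ with $\deg t = 1$ when $p = 2$, and $S \cong \F_p[x]\otimes\Lambda(y)$ with $\deg x = 2$ and $\deg y = 1$ when $p$ is odd; in either case $S^n \neq 0$ for every $n \geq 0$ while each $S^n$ is finite-dimensional over $\F_p$, so $S$ is infinite-dimensional over $\F_p$.

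First I would record the consequence of Theorem \ref{fingen}: the map $\rho$ is finite, so $S$ is a finitely generated module over $R$ acting through $\rho$, and I may choose finitely many homogeneous generators $s_1,\dots,s_k \in S$. Next I would suppose, for contradiction, that the map $\rho^n \colon H^n(G,\mathbb Z/p\Z) \to H^n(H,\mathbb Z/p\Z)$ is nonzero for only finitely many $n$. Then there is an integer $N$ with $\rho^n = 0$ for all $n \geq N$, and since $\rho$ is graded its image $\Imm(\rho)$ is the direct sum of the images $\Imm(\rho^n)$, hence is contained in $\bigoplus_{n < N} S^n$. As each $S^n$ is finite-dimensional, $\Imm(\rho)$ is a finite-dimensional $\F_p$-vector space; fix an $\F_p$-basis $b_1,\dots,b_m$ of it.

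To finish I would observe that every element of $S$ has the form $\sum_i \rho(r_i)\,s_i$ with $r_i \in R$, and expanding each coefficient $\rho(r_i) \in \Imm(\rho)$ in the basis $b_1,\dots,b_m$ shows that the finite set $\{\,b_j s_i \mid 1 \le j \le m,\ 1 \le i \le k\,\}$ spans $S$ over $\F_p$. Thus $S$ would be finite-dimensional over $\F_p$, contradicting the computation above. Therefore $\rho^n$ is nonzero for infinitely many $n$, as claimed.

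The only substantial input is Quillen's finiteness statement, which is already available as Theorem \ref{fingen}; everything else is an elementary module-theoretic argument. The main point to handle carefully is the passage from ``the image of $\rho$ is concentrated in finitely many degrees'' to ``the image is finite-dimensional over $\F_p$'', which relies on each $S^n$ being finite-dimensional — this uses that $H$ is finite. Once finiteness of $\rho$ and finite-dimensionality of $\Imm(\rho)$ are in hand, the contradiction with the infinite-dimensionality of $\Hb(H,\mathbb Z/p\Z)$ is immediate.
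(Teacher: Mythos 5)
Your proof is correct and follows essentially the same route as the paper: assume the restriction vanishes in all degrees $\geq N$, invoke Theorem \ref{fingen} to get finitely many homogeneous module generators of $\Hb(H,\mathbb Z/p\Z)$ over $\Hb(G,\mathbb Z/p\Z)$, and derive that $\Hb(H,\mathbb Z/p\Z)$ would be finite-dimensional (equivalently, vanish in high degrees), contradicting the standard computation of the cohomology of $\Z/p\Z$. The paper phrases the final step as a degree bound ($H^n(H,\mathbb Z/p\Z)=0$ for $n\geq d+d'$) rather than via a basis of $\Imm(\rho)$, but this is only a cosmetic difference.
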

\begin{proof} 
Assume that the claim is false and there is a natural number $d$ 
such that the image of $H^n(G,\mathbb Z/p\Z)\to H^n(H,\mathbb Z/p\Z)$ is zero for $n\geq d$. 
By Theorem \ref{fingen}, we can find a finite subset $S\subset \Hb(H,\mathbb Z/p)$ of homogeneous elements 
which generate $\Hb(H,\mathbb Z/p\Z)$ as a $\Hb(G,\mathbb Z/p\Z)$-module. 
Let $d'$ be the maximal degree of the elements of $S$. 
Then $H^n(H,\mathbb Z/p\Z)=0$ for every $n\geq d+d'$. 
But, since $H$ has order $p$, $H^n(H,\mathbb Z/p\Z)\neq 0$ for every $n$ which is a contradiction. 
\end{proof}

We now recall the following types of algebras from \cite{PQ}. 

\begin{defn}\label{def:boolean_dual_sum} 
We call an $\F_2$-algebra $\Bb = \bigoplus_{i\ge 0} B^i$ a {\it graded Boolean algebra} if $B^0=\mathbb F_2$ and 
there is a Boolean ring $B$ such that, for every $i \geq 1$, we have $B^i=B$, 
and, for every pair $i,j \ge 1$, the multiplication $B^i \times B^j\to B^{i+j}$ is the multiplication in the ring $B=B^i=B^j=B^{i+j}$. 
We call an $\F_2$-algebra $\Db = \bigoplus_{i\ge 0} D^i$ a {\it dual algebra} if $D^0=\F_2$, and $D^i=0$ for $i \geq 2$. 
The {\it connected sum} $\Db \sqcap \Bb$ is the graded $\F_2$-algebra with $(\Db \sqcap \Bb)^0=\F_2$, $(\Db \sqcap \Bb)^i= D^i \oplus B^i$ for $i\geq 1$ and multiplication $D^1B^i$ and $B^iD^1$ is set to be zero for all $i \geq 1$. 
\end{defn}

\begin{rem}
In \cite{PQ} we show that dual and graded Boolean algebras are Koszul algebras. 
In particular, they are quadratic algebras and their connected sum is their direct sum as quadratic algebras. 
\end{rem}

The results in \cite{PQ} are the motivation for the following terminology which we recall from the introduction. 

\begin{defn} 
We say that a pro-$2$ group is a {\it cohomologically Boolean group} if its mod $2$ cohomology is a graded Boolean algebra. 
We say that a pro-$2$ group is a {\it cohomologically quasi-Boolean group} if its mod $2$ cohomology is the connected sum of a dual algebra and a graded Boolean algebra. 
\end{defn}

For every commutative ring $R$, let $\mathcal N(R)$ denote the nilradical of $R$. 

\begin{lemma}\label{7.6} 
Let $G$ be a cohomologically quasi-Boolean pro-$2$ group. 
Then the quotient $\Hb(G,\mathbb Z/2\mathbb Z)/\mathcal N(\Hb(G,\mathbb Z/2\mathbb Z))$ is a graded Boolean algebra. 
\end{lemma}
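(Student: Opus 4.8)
The plan is to compute the nilradical explicitly and recognise the quotient as a graded Boolean algebra. By hypothesis we may write $H \coloneqq \Hb(G,\Z/2\Z) = \Db \sqcap \Bb$ for a dual algebra $\Db$ and a graded Boolean algebra $\Bb$ attached to a Boolean ring $B$, as in Definition \ref{def:boolean_dual_sum}. Thus $H^0=\F_2$, $H^1 = D^1 \oplus B$ (writing $B^1=B$), and $H^i = B$ for $i\ge 2$, since $D^i=0$ for $i\ge 2$. Recall also that $H$ is commutative, being graded-commutative in characteristic two, so that $\mathcal N(H)$ is the set of nilpotent elements and is an ideal. I claim that $\mathcal N(H)$ is precisely $D^1$, viewed as sitting in degree one.

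First I would check that $D^1$ is an ideal contained in $\mathcal N(H)$. For the ideal property, the only products that could leave $D^1$ are $H^0\cdot D^1 \subseteq D^1$, which is clear, together with $D^1\cdot D^1 \subseteq D^2 = 0$ and $D^1\cdot B^i = B^i\cdot D^1 = 0$; the latter two vanish by the defining relations of a dual algebra and of the connected sum, so $H\cdot D^1 \subseteq D^1$. Moreover every $a\in D^1$ satisfies $a^2\in D^2 = 0$, hence $D^1 \subseteq \mathcal N(H)$. For the reverse inclusion I would argue that the quotient $H/D^1$ is reduced. Its identification is immediate: killing $D^1$ leaves $(H/D^1)^0 = \F_2$ and $(H/D^1)^i = B$ for every $i\ge 1$, with multiplication induced from the Boolean ring $B$, which is exactly the graded Boolean algebra $\Bb$. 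To see that $\Bb$ has no nonzero nilpotents, take a nonzero $\bar x$ and let $\beta_m \ne 0$ in $B$ be its top homogeneous component, in degree $m$. If $m=0$ then $\bar x$ is a nonzero scalar and not nilpotent. If $m\ge 1$, then the degree-$nm$ component of $\bar x^{\,n}$ is $\beta_m^n$, since any product of $n$ components of total degree $nm$ must use the top component $n$ times; and $\beta_m^n = \beta_m \ne 0$ because $B$ is Boolean (as already observed in the proof of Theorem \ref{sigma}, Boolean rings are reduced). Hence no power of $\bar x$ vanishes, so $\Bb$ is reduced. Consequently, if $x\in H$ is nilpotent, its image in $H/D^1\cong\Bb$ is a nilpotent element of a reduced ring, hence zero, so $x\in D^1$; this gives $\mathcal N(H)\subseteq D^1$, and combined with the above, $\mathcal N(H)=D^1$. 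Therefore $H/\mathcal N(H) = H/D^1 \cong \Bb$ is a graded Boolean algebra, as required.

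The argument is essentially bookkeeping with the connected-sum multiplication, and I do not expect a genuine obstacle. The one point requiring a little care is the reducedness of $\Bb$: I would isolate it via the top-degree-component computation above, which is self-contained and avoids appealing to the (true, but heavier) general fact that the nilradical of a graded-commutative ring is homogeneous.
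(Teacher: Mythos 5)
Your proof is correct and follows essentially the same route as the paper, which identifies $\mathcal N(\Hb(G,\Z/2\Z))$ with $D^1$ and concludes that the quotient is $\Bb$. You merely supply the details the paper leaves implicit (in particular, that a mixed element $d+b$ with $b\in\Bb$ nonzero cannot be nilpotent, via the reducedness of $\Bb$ and the top-degree-component computation), which is a welcome but not essentially different elaboration.
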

\begin{proof} 
Let $\Hb(G,\mathbb Z/2\mathbb Z)$ be the connected sum of a dual algebra $\Db$ and a graded Boolean algebra $\Bb$. 
Since all elements of $D^1$ are nilpotent while no element of $\Bb$ is, 
we see that $D^1$ is the nilradical of the ring $\Hb(G,\mathbb Z/2\mathbb Z) = \Db \sqcap \Bb$. 
Hence the quotient $\Hb(G,\mathbb Z/2\mathbb Z)/\mathcal N(\Hb(G,\mathbb Z/2\mathbb Z))$ is isomorphic to $\Bb$. 
\end{proof}

\begin{defn} 
Let $G$ be a cohomologically quasi-Boolean pro-$2$ group. 
Let $B$ denote, up to isomorphism, the unique Boolean ring 
such that the associated graded Boolean algebra $\Bb$ is isomorphic 
 to $\Hb(G,\mathbb Z/2\mathbb Z)/\mathcal N(\Hb(G,\Z/2\Z))$ 
 which exists by Lemma \ref{7.6}.   
We say that a continuous homomorphism $k \colon G \to \Z/2\Z$ is a {\it quasi-canonical homomorphism} if the image of the associated cohomology class $k\in H^1(G,\Z/2\Z)$ under the quotient map
\[
\Hb(G,\Z/2\Z)\to \Hb(G,\Z/2\Z)/\mathcal N(\Hb(G,\Z/2\Z))
\]
is the unit of $B$.
\end{defn}

\begin{rem}\label{6.8} 
Quasi-canonical homomorphisms $k\in H^1(G,\mathbb Z/2\mathbb Z)$ can be characterised by the following property: 
for every $n>0$ and $c\in H^n(G,\mathbb Z/2\mathbb Z)$, we have $c^2=c \cup k^n$. 
To prove this assertion, we write $\Hb = \Hb(G,\mathbb Z/2\mathbb Z)$ as the connected sum of $\Db$ and $\Bb$.
Then $k\in H^1(G,\mathbb Z/2\mathbb Z)$ is quasi-canonical if and only if its image under the quotient map $\Hb/D^{\bullet >0} \to \Bb$ 
is the identity map since $D^{\bullet >0}$ is the nilradical of $\Hb$ as pointed out in the proof of Lemma \ref{7.6}. 
The identity $c^2=c \cup k^n$ holds for all elements of $\Bb$,   
and this identity is equivalent to $k$ being the identity in $B$ by definition of the multiplication in $\Bb$ in Definition \ref{def:boolean_dual_sum}. 
The same identity trivially holds for all $c$ in $D^{\bullet>0}$  and any $k$. 
\end{rem}

\begin{defn} 
An {\it elementary $p$-group} $H$ is a group isomorphic to $(\mathbb Z/p\Z)^n$ for some $n$. 
The rank of $H$ is $n$, that is, its dimension as a vector space over $\mathbb Z/p\Z$. 
The {\it elementary rank} of a pro-$p$ group $G$ is the supremum of all natural numbers $r$ such that $G$ has a subgroup isomorphic to an elementary $p$-group of rank $r$. 
\end{defn}

\begin{prop}\label{rank} 
Let $G$ be a cohomologically quasi-Boolean pro-$2$ group. Then the following holds:
\begin{enumerate}
\item[(i)] every involution $x\in G$ is not in the kernel of any quasi-canonical homomorphism; 
\item[(ii)] the elementary rank of $G$ is at most one. 
\end{enumerate}
\end{prop}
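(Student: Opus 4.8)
The plan is to establish $(i)$ directly and then to deduce $(ii)$ as a formal consequence of it. Both parts rest on two earlier inputs: the multiplicative characterisation of quasi-canonical homomorphisms recorded in Remark \ref{6.8}, and the non-vanishing of the restriction to a cyclic subgroup of order two supplied by Corollary \ref{negligible}.

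For $(i)$ I would argue by contradiction. Suppose $x\in G$ is an involution contained in the kernel of some quasi-canonical homomorphism $k$, and set $H=\langle x\rangle$, a subgroup of order two with $\Hb(H,\Z/2\Z)\cong\F_2[t]$ where $t$ has degree one. Since $k(x)=0$, the restriction $k|_H\in H^1(H,\Z/2\Z)$ is zero. By Remark \ref{6.8} we have $c^2=c\cup k^n$ for every $n>0$ and every $c\in H^n(G,\Z/2\Z)$; restricting this identity to $H$ yields $(c|_H)^2=c|_H\cup(k|_H)^n=0$. As squaring is injective on each homogeneous component of $\F_2[t]$, this forces $c|_H=0$ for all $c\in H^n(G,\Z/2\Z)$ and all $n\geq 1$, i.e.\ the restriction map $H^n(G,\Z/2\Z)\to H^n(H,\Z/2\Z)$ vanishes in every positive degree. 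This contradicts Corollary \ref{negligible}, according to which this restriction is non-zero for infinitely many $n$. Hence no involution lies in the kernel of a quasi-canonical homomorphism.

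For $(ii)$ I would again argue by contradiction, assuming $G$ contains a subgroup $E\cong(\Z/2\Z)^2$. First I fix a quasi-canonical homomorphism $k$, which exists because the unit of $B$ lives in degree one of $\Hb(G,\Z/2\Z)/\mathcal N(\Hb(G,\Z/2\Z))\cong\Bb$ and can be lifted along the surjection $H^1(G,\Z/2\Z)\to B^1$. Every non-identity element of $E$ is an involution, and $k|_E$ is a homomorphism $E\to\Z/2\Z$; its kernel has order at least two, so it contains a non-identity element, that is, an involution $y\in E$ with $k(y)=0$. Then $y$ is an involution of $G$ lying in the kernel of the quasi-canonical homomorphism $k$, contradicting part $(i)$. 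Therefore $G$ admits no elementary subgroup of rank two, and its elementary rank is at most one.

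The crux of the argument is concentrated entirely in part $(i)$, and the main point to get right is the incompatibility of the two cohomological inputs. The identity $c^2=c\cup k^n$ encodes the connected-sum Boolean structure of $\Hb(G,\Z/2\Z)$ from Definition \ref{def:boolean_dual_sum}, while Corollary \ref{negligible} is the profinite form of Quillen's non-negligibility statement; the delicate step is to observe that once $k|_H$ vanishes these two facts cannot coexist, which relies on the injectivity of squaring on $\F_2[t]$ to convert the vanishing of all restricted squares into the vanishing of the restrictions themselves. Part $(ii)$ then requires only the elementary fact that a homomorphism $(\Z/2\Z)^2\to\Z/2\Z$ always has a non-trivial kernel, so no additional difficulty arises there.
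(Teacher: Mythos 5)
Your proposal is correct and follows essentially the same route as the paper: part $(i)$ combines the identity $c^2=c\cup k^n$ from Remark \ref{6.8} with Corollary \ref{negligible} and the fact that $\Hb(\langle x\rangle,\Z/2\Z)$ is a polynomial ring (so squaring is injective in positive degrees), and part $(ii)$ is the same reduction via the non-trivial kernel of a homomorphism $(\Z/2\Z)^2\to\Z/2\Z$. Your explicit note on the existence of a quasi-canonical homomorphism is a small addition the paper leaves implicit, but otherwise the arguments coincide.
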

\begin{proof} 
To prove $(i)$, let $x\in G$ be an involution, let $H\subseteq G$ be the subgroup generated by $x$ and let $i \colon H\to G$ be the inclusion map. 
Let $i^{\bullet}\colon \Hb(G,\Z/2\Z)\to \Hb(H,\Z/2\Z)$ denote the pullback homomorphism on cohomology. 
Assume that there is a quasi-canonical homomorphism $k\in H^1(G,\mathbb Z/2\mathbb Z)$ whose restriction to $H$ is zero, or equivalently $i^{\bullet}(k)=0$. 
By Corollary \ref{negligible}, there is an $n>0$ and a $c\in H^n(G,\mathbb Z/2\mathbb Z)$ such that $i^{\bullet}(c)\in H^n(H,\mathbb Z/2\mathbb Z)$ is non-zero. 
Then, by Remark \ref{6.8}, 
\[
0\neq i^{\bullet}(c)^2=i^{\bullet}(c^2)=i^{\bullet}(c\cup k^n)=i^{\bullet}(c)\cup i^{\bullet}(k)^n=0
\]
using that $\Hb(H,\mathbb Z/2\mathbb Z)$ is isomorphic to a polynomial ring in one variable over $\mathbb Z/2\mathbb Z$, but this is a contradiction. 
Therefore $(i)$ holds. 

To prove $(ii)$, assume to the contrary that $G$ contains a subgroup $H$ isomorphic to an elementary $2$-group of rank $2$. 
Then the kernel of the restriction of a quasi-canonical homomorphism to $H$ is non-trivial. 
This contradicts part $(i)$, so claim $(ii)$ is true. 
\end{proof}

\begin{defn}\label{f-def} 
Let $p$ be a prime number and let $h \colon R\to S$ be a homomorphism of graded anti-commutative algebras over $\mathbb F_p$. 
We say that $h$ is an {\it $F$-isomorphism} if
\begin{enumerate}
\item[$(i)$] for every homogeneous element $r$ in the kernel of $h$ we have $r^n=0$ for some $n$,
\item[$(ii)$] for every homogeneous element $s$ in $S$ the power $s^{p^n}$ is in the image of $h$ for some $n$.
\end{enumerate}
\end{defn}

\begin{lemma}\label{booleup} 
Let $f \colon \Bb \to \Cb$ be an $F$-isomorphism between graded Boolean algebras. 
Then $f$ is an isomorphism. 
\end{lemma}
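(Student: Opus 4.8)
The plan is to establish injectivity and surjectivity separately, exploiting that in a graded Boolean algebra every homogeneous element of positive degree is an idempotent of the underlying Boolean ring. Since $f$ is a morphism of graded algebras it preserves degrees, so I would decompose it into $\F_2$-linear maps $f_i \colon B^i \to C^i$; in degree $0$ the map $f_0$ is the identity of $\F_2$ because $f(1)=1$, and for $i \ge 1$ I identify $B^i$ with the Boolean ring $B$ and $C^i$ with the Boolean ring $C$. For injectivity I would take a homogeneous $r$ of positive degree in $\ker f$. Condition $(i)$ of the $F$-isomorphism property yields $r^n = 0$ for some $n \ge 1$; but $r$, regarded in $B$, satisfies $r^2 = r$ and hence $r^n = r$, forcing $r = 0$. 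Together with injectivity in degree $0$ and the fact that $\ker f$ is a graded ideal, this gives injectivity of $f$.

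The heart of the argument is surjectivity, and the key step I would carry out first is the identity $f_{2m} = f_m$ of maps $B \to C$, valid for every $m \ge 1$. Indeed, for $x \in B$ the multiplicativity of $f$ together with $x^2 = x$ in $B$ gives $f_{2m}(x) = f_{2m}(x \cdot x) = f_m(x)^2$, and $f_m(x)^2 = f_m(x)$ since $C$ is Boolean; iterating yields $f_{2^n m} = f_m$ and hence $\Imm f_{2^n m} = \Imm f_m$ for all $n$. Given then a homogeneous $s \in C^i$ of positive degree, condition $(ii)$ supplies an $n$ with $s^{2^n} \in \Imm f$, which by degree-preservation means $s^{2^n} \in \Imm f_{2^n i}$. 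Since $C$ is Boolean we have $s^{2^n} = s$, so $s \in \Imm f_{2^n i} = \Imm f_i$; with surjectivity in degree $0$ this shows $f$ is onto.

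Hence $f$ is a bijective morphism of graded algebras, and therefore an isomorphism. I expect the only delicate point to be the bookkeeping in the surjectivity step: the same element of the Boolean ring $C$ lives in infinitely many degrees, so passing to $2^n$-th powers raises the degree while I must ultimately realise $s$ in its fixed degree $i$. The lemma $f_{2m} = f_m$ is precisely what reconciles these, and identifying it as the crux is the main conceptual content of the proof.
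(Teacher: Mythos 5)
Your proposal is correct and follows essentially the same route as the paper: injectivity comes from the absence of nilpotent homogeneous elements in a graded Boolean algebra, and surjectivity from the fact that idempotency trivialises the power maps $B^n\to B^{nm}$ and $C^n\to C^{nm}$. The paper phrases this last point as ``the $m$-th power map is an isomorphism'' where you phrase it as the identity $f_{2m}=f_m$ of maps on the underlying Boolean ring, but the content is the same.
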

\begin{proof} 
Since graded Boolean algebras have no nilpotent elements, we get that $f$ is injective by condition $(i)$ of Definition \ref{f-def}. 
Next we show that $f$ is surjective. 
Since $f$ is clearly an isomorphism in degree zero, it will be sufficient to show that for every positive integer $n$ and $x\in C^n$ there is a $y\in B^n$ such that $f(y)=x$. 
By condition $(ii)$ of Definition \ref{f-def}, there is a positive integer $m$ and a $z\in B^{nm}$ such that $f(z)=x^m$. 
Since $\Bb$ is a graded Boolean algebra,  
the $m$-th power map $B^n\to B^{nm}$ is an isomorphism, 
so there is a $y\in B^n$ such that $y^m=z$, and hence $f(y)^m=f(y^m)=f(z)=x^m$. 
Since $\Cb$ is also a graded Boolean algebra, 
the $m$-th power map $C^n\to C^{nm}$ is an isomorphism, so $f(y)=x$. 
\end{proof}

\begin{notn} 
For every profinite group $G$, let $\mathfrak A(G)$ denote the set of all subgroups of $G$ which are finite elementary abelian $p$-groups. 
We note that, since $G$ is Hausdorff, all such subgroups are closed by \cite[I \S 8.2, Proposition 4 on page 77]{BourbakiTop}. 
Also note that $\mathfrak A(G)$ form a category where morphisms are maps $f \colon A\to B$ such that there is an $x\in G$ such that $f(y)=x^{-1}yx$ for every $y\in A$. 
Note that the assignment $G\mapsto\mathfrak A(G)$ is functorial, 
that is, for every continuous homomorphism $h \colon G \to H$ 
there is an induced functor $\mathfrak A(h) \colon \mathfrak A(G)\to\mathfrak A(H)$. 
For every open normal subgroup $N\triangleleft G$, let $\mathfrak A(G,N)$ denote the image of
$\mathfrak A(G)$ under the functor $\mathfrak A(\pi_N) \colon \mathfrak A(G)\to\mathfrak A(G/N)$ 
induced by the quotient map $\pi_N \colon G\to G/N$.
\end{notn}

\begin{notn} 
Let $\mathbf{DGAC}_p$ denote the category of graded anti-commutative algebras over $\mathbb Z/p\Z$. 
For every profinite group $G$, 
let $F_G \colon \mathfrak A(G)\to\mathbf{DGAC}_p$ be the functor given by the rule $A \mapsto \Hb(A,\mathbb Z/p\Z)$. 
For every open normal subgroup $N\triangleleft G$, let $F_{G,N} \colon \mathfrak A(G,N)\to\mathbf{DGAC}_p$ denote the restriction of $F_{G/N}$ onto
$\mathfrak A(G,N)$. 
For every $G$ and $N$ as above, let $\underline H^{\bullet}_{\mathfrak A}(G,\mathbb Z/p\Z)$ and $\underline H^{\bullet}_{\mathfrak A}(G,N,\mathbb Z/p\Z)$ denote the inductive limit of the functors $F_G$ and $F_{G,N}$, respectively. 
\end{notn}

\begin{notn} 
Let $G$ and $N$ be as above. 
Then for every $A\in\mathfrak A(G,N)$, 
let $\pi_{N,A} \colon A\to\pi_N(A)$ be the map induced by the restriction of $\pi_N$ onto $A$. 
We let $\pi_{N,A}^{\bullet}\colon \Hb(\pi_N(A),\Z/p\Z) \to \Hb(A,\Z/p\Z)$
denote the pullback homomorphism on cohomology.  
Note that for every
\[
\underline c=\{c_A\in \Hb(A,\mathbb Z/p\Z)\mid
A\in\mathfrak A(G,N)\}\in
\underline H^{\bullet}_{\mathfrak A}(G,N,\mathbb Z/p\Z) 
\]
the collection
\[
\mathfrak a_N(\underline c)=\{\pi_{N,A}^{\bullet}(c_{\pi_N(A)})
\in \Hb(A,\mathbb Z/p\Z)\mid
A\in\mathfrak A(G)\}
\]
lies in $\underline H^{\bullet}{\mathfrak A}(G,\mathbb Z/p\Z)$ and the map
$\mathfrak a_N \colon \underline H^{\bullet}_{\mathfrak A}(G,N,\mathbb Z/p\Z)\to\underline H^{\bullet}_{\mathfrak A}(G,\mathbb Z/p\Z)$ is a homomorphism of graded anti-commutative algebras over $\mathbb Z/p\Z$. 
Let $\Hb_{\mathfrak A}(G,\mathbb Z/p)$ denote the union of the images of 
these homomorphisms as $N$ ranges over the set of all open normal subgroups of $G$. 
Since these images form an inductive system, $\Hb_{\mathfrak A}(G,\mathbb Z/p\Z)$ is a graded subalgebra of
$\underline H^{\bullet}_{\mathfrak A}(G,\mathbb Z/p\Z)$. 
Clearly, when $G$ is finite, we have $\Hb_{\mathfrak A}(G,\mathbb Z/p\Z) = \underline H^{\bullet}_{\mathfrak A}(G,\mathbb Z/p\Z)$. 
\end{notn}

\begin{prop}\label{onlyone} 
Let $G$ be a profinite group of elementary rank at most $1$ such that there is an open normal subgroup of $G$ without $p$-torsion. 
Then $H^n_{\mathfrak A}(G,\mathbb Z/p\Z) = C(\mathcal X_p^*(G),\mathbb Z/p\Z)$ for every $n>0$ 
where $C(\mathcal X_p^*(G),\Z/p\Z)$ denotes the ring of continuous functions from $\mathcal X_p^*(G)$ to $\Z/p\Z$, 
where the latter is equipped with the discrete topology.  
\end{prop}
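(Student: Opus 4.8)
The plan is to compute both sides explicitly and match them, using the hypothesis on the elementary rank to pin down the shape of the category $\mathfrak A(G)$ and the hypothesis on the torsion-free open normal subgroup to control the passage to finite quotients. First I would describe $\mathfrak A(G)$: since $G$ has elementary rank at most one, every object is either the trivial subgroup or a subgroup $\langle x\rangle$ of order $p$ with $x\in\mathcal Y_p^*(G)$. For $n>0$ we have $H^n(\{1\},\Z/p\Z)=0$, so the trivial subgroup contributes nothing, and by Corollary \ref{negligible} each order-$p$ subgroup $A$ has $H^n(A,\Z/p\Z)\cong\Z/p\Z$ with a distinguished generator determined by the chosen generator of $A$. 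The morphisms of $\mathfrak A(G)$ between order-$p$ subgroups are exactly the conjugations $z\mapsto g^{-1}zg$ carrying one onto another, and a self-morphism of $A=\langle x\rangle$ comes from a $g\in N_G(A)$ and acts on $H^\bullet(A,\Z/p\Z)$ through the induced automorphism of $A$. For $p=2$, which is the case relevant to this paper, $\mathrm{Aut}(\Z/2\Z)$ is trivial, so these self-morphisms act as the identity; in general one must keep track of the $\mathrm{Aut}(\Z/p\Z)$-action.

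Granting this, for $n>0$ an element of $\underline H^n_{\mathfrak A}(G,\Z/p\Z)$ is a family $\underline c=\{c_A\}$ with $c_A\in H^n(A,\Z/p\Z)$ compatible under all these morphisms. Writing $c_{\langle x\rangle}=\lambda(x)\cdot(\text{distinguished generator})$, compatibility under conjugations says precisely that $\lambda\colon\mathcal Y_p^*(G)\to\Z/p\Z$ is constant on conjugacy classes, i.e.\ descends to a function on $\mathcal X_p^*(G)$, while the trivial subgroup imposes no further constraint. Hence $\underline H^n_{\mathfrak A}(G,\Z/p\Z)$ is identified with the set of all (a priori not continuous) functions $\mathcal X_p^*(G)\to\Z/p\Z$.

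Next I would identify the subalgebra $\Hb_{\mathfrak A}(G,\Z/p\Z)$, i.e.\ the union of the images of the maps $\mathfrak a_N$, with the continuous functions. For a family $\underline c$ over $\mathfrak A(G,N)$, the value of $\mathfrak a_N(\underline c)$ at the class of $x$ is the pullback of $c_{\pi_N(\langle x\rangle)}$ along $\pi_{N,\langle x\rangle}$; since this factors through the finite quotient $G/N$ and $\mathcal Y_p^*(G)\to\mathcal Y_p(G/N)$ is continuous, the resulting function on $\mathcal X_p^*(G)$ is continuous, so every element of $\Hb_{\mathfrak A}$ gives a continuous function. Conversely, given a continuous $\lambda\colon\mathcal X_p^*(G)\to\Z/p\Z$, I would lift it to a continuous conjugation-invariant function on $\mathcal Y_p^*(G)$ and factor it through some $\pi_N$; here one uses that $\mathcal Y_p^*(G)$ is compact, which is where the torsion-free open normal subgroup first enters. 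Indeed $\mathcal Y_p(G)$ is closed in $G$ as the preimage of $1$ under $g\mapsto g^p$, and since a torsion-free open normal subgroup $U$ meets $\mathcal Y_p(G)$ only in $1$, we have $\mathcal Y_p^*(G)=\mathcal Y_p(G)\cap(G\setminus U)$, which is closed, hence compact; by $(i)\Rightarrow(iv)$ of Theorem \ref{mama} its clopen subsets, refined through the basic clopen cosets of $G$, allow $\lambda$ to be factored through a single $\pi_N$.

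The main obstacle, and the second essential use of the hypothesis, is to arrange the values to match exactly after passing to $G/N$. If some order-$p$ element $x$ lay in $N$, then $\pi_N(\langle x\rangle)$ would be trivial and $\mathfrak a_N$ would force the value $0$ at the class of $x$, which need not agree with $\lambda$. This is avoided by replacing $N$ by its intersection with the given torsion-free open normal subgroup: then $N$ has no $p$-torsion, so $\pi_N(x)$ has order exactly $p$ for every $x\in\mathcal Y_p^*(G)$, the induced map $\mathcal X_p^*(G)\to\mathcal X_p^*(G/N)$ separates the relevant classes, and the compatible family on $\mathfrak A(G,N)$ defined by $\lambda$ on the distinguished generators pulls back under $\mathfrak a_N$ to exactly $\lambda$. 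Together with injectivity in degree $n>0$ (a family whose $\lambda$-values all vanish is zero, the trivial subgroup contributing nothing), this yields the claimed equality $H^n_{\mathfrak A}(G,\Z/p\Z)=C(\mathcal X_p^*(G),\Z/p\Z)$ for every $n>0$.
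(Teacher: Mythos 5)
Your argument is correct and takes essentially the same route as the paper's proof: the elementary-rank hypothesis reduces the inverse limit to conjugation-invariant $\Z/p\Z$-valued functions on $\mathcal Y_p^*(G)$, elements in the image of $\mathfrak a_N$ give continuous functions because they factor through the finite quotient $G/N$, and conversely a continuous function factors through a single $\pi_N$ with $N$ shrunk to be $p$-torsion-free, using that $\mathcal Y_p^*(G)$ is closed (hence compact) thanks to the torsion-free open normal subgroup. Two cosmetic points only: $H^n(A,\Z/p\Z)\cong\Z/p\Z$ is standard cyclic-group cohomology rather than a consequence of Corollary \ref{negligible}, and the induced map $\mathcal X_p^*(G)\to\mathcal X_p^*(G/N)$ need not ``separate the relevant classes''---what your factorisation step actually provides, and what suffices to define the compatible family on $\mathfrak A(G,N)$, is that $\lambda$ is constant on the fibres of that map.
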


\begin{remark} 
Let $F(\mathcal X_p^*(G),\mathbb Z/p\Z)$ denote the group of all $\mathbb Z/p\Z$-valued functions on $\mathcal X_p^*(G)$. 
For every non-zero $A\in\mathfrak A(G)$, we have $A\cong\mathbb Z/p\Z$. 
Hence $H^n(A,\mathbb Z/p\Z)=\mathbb Z/p\Z$, every element $c\in \underline H^n_{\mathfrak A}(G,\mathbb Z/p\Z)$ gives rise to a function $\mathcal Y_p^*(G)\to\mathbb Z/p\Z$ which is conjugation-invariant, 
and hence descends to a function $f(c) \colon \mathcal X_p^*(G)\to\mathbb Z/p\Z$. 
The map $f \colon \underline H^n_{\mathfrak A}(G,\mathbb Z/p\Z)\to F(\mathcal X_p^*(G),\mathbb Z/p\Z)$ is an isomorphism. 
So the precise meaning of the claim of Proposition \ref{onlyone} 
is that the image of $H^n_{\mathfrak A}(G,\mathbb Z/p\Z)$ under $f$ and $C(\mathcal X_p^*(G),\mathbb Z/p\Z)$ are equal as subgroups of $F(\mathcal X_p^*(G),\mathbb Z/p\Z)$.  
\end{remark}

\begin{proof}[Proof of Proposition \ref{onlyone}]
First let $c\in H^n_{\mathfrak A}(G,\mathbb Z/p\Z)$ be arbitrary. 
Then there is an open normal subgroup $N\triangleleft G$ and a $d\in \underline H^n_{\mathfrak A}(G,N,\mathbb Z/p\Z)$ 
such that $c=\mathfrak a_N(d)$. 
Since for every pair $M,N$ of  open normal subgroups of $G$ 
such that $M\subseteq N$ the image of $\mathfrak a_M$ contains the image of $\mathfrak a_N$, 
we may assume that $N$ does not contain $p$-torsion without the loss of generality by shrinking $N$ if it is necessary. Then $\pi_N$ induces a map $\pi_N^{\#} \colon \mathcal X_p^*(G)\to\mathcal X_p^*(G/N)$, and $f(c)$ is the composition of $\pi_N^{\#}$ with the function $\mathrm{Im}(\pi_N^{\#})
\to\mathbb Z/p\Z$ corresponding to $d$. 
Since both $\pi_N^{\#}$ and the latter function are continuous, we get that $f(c)$ is continuous, too.

Now let $c\in C(\mathcal X_p^*(G),\mathbb Z/p\Z)$ be arbitrary, and let $g \colon \mathcal Y_p^*(G)\to\mathbb Z/p\Z$ be the continuous function we get by composing the quotient map $\mathcal Y_p^*(G)\to\mathcal X_p^*(G)$ with $c$. 
Since the translates of normal open subgroups of $G$ form a sub-basis for the topology on $G$, 
for every $x\in\mathcal Y_p^*(G)$ there is an open normal subgroup $N_x\triangleleft G$ 
such that $g$ is constant on $xN_x\cap\mathcal Y_p^*(G)$. 
Since there is an open normal subgroup of $G$ without $p$-torsion, the subspace $\mathcal Y_p^*(G)$ is closed, and hence compact, 
so there is a finite subset $S\subseteq\mathcal Y_p^*(G)$ such that $\mathcal Y_p^*(G)\subseteq
\bigcup_{x\in S}xN_x$. 

Since $N=\bigcap_{x\in S}N_x$ is the intersection of finitely many normal open subgroups, 
it is an open normal subgroup, too. 
We may even assume that $N$ does not contain $p$-torsion without the loss of generality by shrinking $N$ if it is necessary, as above. 
Then $g$ is constant on $xN\cap\mathcal Y_p^*(G)$ for every $x\in\mathcal Y_p^*(G)$, 
and hence $c$ is the composition of $\pi_N^{\#}$ above with a function $d \colon \mathrm{Im}(\pi_N^{\#})\to\mathbb Z/p\Z$.
\end{proof}

\begin{notn} 
Let $G$ be as above, and, for every $A\in\mathfrak A(G)$, let $i_A \colon A\to G$ be the inclusion map. 
For every $c\in \Hb(G,\mathbb Z/p\Z)$, the collection
$$q_G(c)=\{i_A^{\bullet}(c)\in \Hb(A,\mathbb Z/p\Z)\mid
A\in\mathfrak A(G)\}$$
lies in $\underline H^\bbb_{\mathfrak A}(G,\mathbb Z/p\Z)$ 
since every conjugation of $G$ induces the identity on $\Hb(G,\mathbb Z/p\Z)$. 
The resulting map $q_G \colon \Hb(G,\mathbb Z/p\Z)\to \underline H^\bbb_{\mathfrak A}(G,\mathbb Z/p\Z)$ is a homomorphism of graded anti-commutative algebras over $\mathbb Z/p\Z$.
\end{notn}

\begin{lemma} 
The homomorphism $q_G$ maps $\Hb(G,\mathbb Z/p\Z)$ into $\Hb_{\mathfrak A}(G,\mathbb Z/p\Z)$. 
\end{lemma}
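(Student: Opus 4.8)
The plan is to exploit the fact that the continuous mod $p$ cohomology of a profinite group is the direct limit of the cohomology of its finite quotients along inflation: for every $c\in\Hb(G,\mathbb Z/p\Z)$ there is an open normal subgroup $N\triangleleft G$ and a class $\bar c\in\Hb(G/N,\mathbb Z/p\Z)$ with $c=\pi_N^{\bullet}(\bar c)$, where $\pi_N^{\bullet}$ denotes inflation along $\pi_N \colon G\to G/N$. I would fix such an $N$ and $\bar c$ and then produce an explicit preimage of $q_G(c)$ under $\mathfrak a_N$, which by definition of $\Hb_{\mathfrak A}(G,\mathbb Z/p\Z)$ is exactly what is needed.

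Concretely, I would first form the class $q_{G/N}(\bar c)=\{i_{\bar A}^{\bullet}(\bar c)\mid \bar A\in\mathfrak A(G/N)\}$ in $\underline H^{\bullet}_{\mathfrak A}(G/N,\mathbb Z/p\Z)$, where $i_{\bar A}\colon\bar A\to G/N$ is the inclusion; this is well-defined by the same conjugation-invariance argument that makes $q_{G/N}$ well-defined. Its restriction $\underline d$ to the subcategory $\mathfrak A(G,N)\subseteq\mathfrak A(G/N)$ is then an element of $\underline H^{\bullet}_{\mathfrak A}(G,N,\mathbb Z/p\Z)$, with components $d_{\pi_N(A)}=i_{\pi_N(A)}^{\bullet}(\bar c)$ for $A\in\mathfrak A(G)$, where $i_{\pi_N(A)}\colon\pi_N(A)\to G/N$ is the inclusion.

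It then remains to check that $\mathfrak a_N(\underline d)=q_G(c)$. For $A\in\mathfrak A(G)$ the $A$-component of $\mathfrak a_N(\underline d)$ is
\[
\pi_{N,A}^{\bullet}(d_{\pi_N(A)})=\pi_{N,A}^{\bullet}(i_{\pi_N(A)}^{\bullet}(\bar c))=(i_{\pi_N(A)}\circ\pi_{N,A})^{\bullet}(\bar c).
\]
The key point is the identity of maps $A\to G/N$ given by $i_{\pi_N(A)}\circ\pi_{N,A}=\pi_N\circ i_A$, which holds because both send $a\in A$ to its image $\pi_N(a)$. Applying functoriality of cohomology then gives $(i_{\pi_N(A)}\circ\pi_{N,A})^{\bullet}(\bar c)=i_A^{\bullet}(\pi_N^{\bullet}(\bar c))=i_A^{\bullet}(c)$, which is precisely the $A$-component of $q_G(c)$. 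Hence $q_G(c)=\mathfrak a_N(\underline d)$ lies in the image of $\mathfrak a_N$, and therefore in $\Hb_{\mathfrak A}(G,\mathbb Z/p\Z)$.

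The only subtle points, neither of which I expect to be a genuine obstacle, are confirming that $\underline d$ is compatible across the morphisms of $\mathfrak A(G,N)$ — inherited directly from the compatibility of $q_{G/N}(\bar c)$, together with the fact that pullback along inner automorphisms is the identity on cohomology — and tracking the three inclusion and quotient maps correctly in the displayed identity. The genuinely essential input is the direct-limit description of profinite cohomology that allows $c$ to be replaced by an inflated class $\bar c$ on a finite quotient; everything else is bookkeeping with the functors $F_G$ and $F_{G,N}$.
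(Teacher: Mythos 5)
Your proposal is correct and follows the same route as the paper: write $c$ as the inflation of a class on a finite quotient $G/N$, restrict $q_{G/N}$ of that class to $\mathfrak A(G,N)$ (the paper's map $\phi_N$), and apply $\mathfrak a_N$. You merely spell out the functoriality computation that the paper dismisses with ``clearly''.
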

\begin{proof} Let $c\in H^n(G,\mathbb Z/p\Z)$ be arbitrary. 
Then there is an open normal subgroup $N\triangleleft G$ and a $d\in H^n(G/N,\mathbb Z/p\Z)$ such that $c=\pi^{\bullet}_N(d)$. 
Let $\phi_N \colon \underline H^n_{\mathfrak A}(G/N,\mathbb Z/p\Z)\to\underline H^n_{\mathfrak A}(G,N,\mathbb Z/p\Z)$ be the map induced by the inclusion functor $\mathfrak A(G,N)
\to\mathfrak A(G/N)$. 
Clearly $q_G(c)= \mathfrak a_N(\phi_N(q_{G/N}(d)))$, and hence the claim holds. 
\end{proof} 

\begin{thm}[Quillen--Scheiderer]\label{quillen-profinite} 
For every profinite group $G$, the homomorphism
$$q_G \colon \Hb(G,\mathbb Z/p\Z)\to \Hb_{\mathfrak A}(G,\mathbb Z/p\Z)$$
is an $F$-isomorphism. 
\end{thm}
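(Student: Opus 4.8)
The plan is to reduce the profinite statement to Quillen's classical theorem for finite groups by writing $G$ as a projective limit of its finite quotients $G/N$ and passing to the colimit over open normal subgroups $N \triangleleft G$. Quillen's theorem (the finite analogue of Theorem~\ref{quillen-profinite}) states that for a finite group $G$, the map $q_G \colon \Hb(G,\mathbb Z/p\Z) \to \underline H^\bbb_{\mathfrak A}(G,\mathbb Z/p\Z) = \Hb_{\mathfrak A}(G,\mathbb Z/p\Z)$ is an $F$-isomorphism; this is the content of Quillen's work \cite{Qu}. The key structural facts I would exploit are that mod-$p$ cohomology of a profinite group is the colimit $\Hb(G,\mathbb Z/p\Z) = \varinjlim_N \Hb(G/N,\mathbb Z/p\Z)$ over open normal subgroups, and that by construction $\Hb_{\mathfrak A}(G,\mathbb Z/p\Z) = \bigcup_N \mathrm{Im}(\mathfrak a_N)$ is assembled precisely so as to be compatible with this colimit.

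First I would verify condition $(i)$ of Definition~\ref{f-def}, that every homogeneous $c$ in the kernel of $q_G$ is nilpotent. Given such $c$, choose $N$ and $d \in H^n(G/N,\mathbb Z/p\Z)$ with $c = \pi_N^\bullet(d)$ using the colimit presentation of cohomology. The identity $q_G(c) = \mathfrak a_N(\phi_N(q_{G/N}(d)))$ established in the lemma preceding the theorem shows that, after possibly enlarging $N$, the element $d$ lies in the kernel of $q_{G/N}$. By Quillen's finite theorem applied to $G/N$, the class $d$ is nilpotent, say $d^m = 0$, and since $\pi_N^\bullet$ is a ring homomorphism we get $c^m = \pi_N^\bullet(d^m) = 0$. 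The mild subtlety here is ensuring that passing from the kernel of $q_G$ to the kernel of $q_{G/N}$ is legitimate, which follows because $\mathfrak A(G,N) \to \mathfrak A(G/N)$ and the comparison maps become compatible once $N$ is shrunk so that distinct conjugacy classes of elementary abelian subgroups remain distinct in $G/N$.

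Next I would establish condition $(ii)$: every homogeneous $s \in \Hb_{\mathfrak A}(G,\mathbb Z/p\Z)$ has some $p$-power $s^{p^k}$ in the image of $q_G$. By definition $s = \mathfrak a_N(d)$ for some open normal $N$ and some $d \in \underline H^\bbb_{\mathfrak A}(G,N,\mathbb Z/p\Z)$, and $\underline H^\bbb_{\mathfrak A}(G,N,\mathbb Z/p\Z)$ is a subalgebra of $\underline H^\bbb_{\mathfrak A}(G/N,\mathbb Z/p\Z)$ via $\phi_N$. Applying Quillen's finite theorem to the finite group $G/N$, there is a $p$-power such that $d^{p^k}$ lies in the image of $q_{G/N}$, say $d^{p^k} = q_{G/N}(e)$ for some $e \in \Hb(G/N,\mathbb Z/p\Z)$. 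Pulling back along $\pi_N$ and using the compatibility $q_G \circ \pi_N^\bullet = \mathfrak a_N \circ \phi_N \circ q_{G/N}$ furnished by the preceding lemma, one deduces $s^{p^k} = \mathfrak a_N(\phi_N(d^{p^k})) = q_G(\pi_N^\bullet(e))$, which lies in the image of $q_G$ as required.

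The main obstacle I anticipate is bookkeeping rather than deep mathematics: one must show that the comparison between $\mathfrak A(G)$ and the colimit of the $\mathfrak A(G,N)$ is well-behaved enough that the finite-level statements glue correctly. Concretely, the functor $\mathfrak A(\pi_N) \colon \mathfrak A(G) \to \mathfrak A(G,N)$ need not be an equivalence for a fixed $N$, since two non-conjugate elementary abelian subgroups of $G$ may become conjugate in $G/N$; the resolution is that for any finite collection of such subgroups one can shrink $N$ to separate them, and the directed nature of the index set of open normal subgroups lets these local choices be compiled into the global $F$-isomorphism statement. Once this colimit compatibility is in hand, both conditions of Definition~\ref{f-def} follow mechanically from Quillen's theorem for each finite quotient $G/N$, so the real work is the careful statement and verification of the interplay between the functors $\mathfrak a_N$, $\phi_N$, and $q_{G/N}$ recorded just before the theorem.
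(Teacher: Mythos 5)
Your overall strategy --- reduce to Quillen's theorem for the finite quotients $G/N$ and assemble over the directed system of open normal subgroups --- is the same as the paper's (which follows Scheiderer's suggestion at the end of \cite{Sc}; note the paper only supplies the argument for condition $(ii)$ of Definition \ref{f-def} and cites \cite{Sc} for condition $(i)$). However, there is a genuine gap at the central step. You assert that $\underline H^{\bullet}_{\mathfrak A}(G,N,\mathbb Z/p\Z)$ is a subalgebra of $\underline H^{\bullet}_{\mathfrak A}(G/N,\mathbb Z/p\Z)$ via $\phi_N$ and then apply Quillen's finite theorem to $d$ inside the latter. This is backwards: $\phi_N$ is induced by the inclusion of index categories $\mathfrak A(G,N)\to\mathfrak A(G/N)$ and therefore \emph{restricts} a compatible family defined on all of $\mathfrak A(G/N)$ to the subcategory $\mathfrak A(G,N)$; it maps $\underline H^{\bullet}_{\mathfrak A}(G/N,\mathbb Z/p\Z)$ to $\underline H^{\bullet}_{\mathfrak A}(G,N,\mathbb Z/p\Z)$, and an element $d$ of the target need not lie in its image, because $G/N$ may contain elementary abelian subgroups that are not conjugate into images of elementary abelian subgroups of $G$. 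Quillen's theorem for the finite group $G/N$ concerns compatible families over all of $\mathfrak A(G/N)$, so it cannot be applied to $d$ as it stands.

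The missing ingredient is exactly Lemma \ref{schei} (the Claim on page 280 of \cite{Sc}): for every open normal $N$ there is an open normal $M\subseteq N$ such that $\mathfrak A(G,M,N)=\mathfrak A(G,N)$, i.e., every elementary abelian subgroup of $G/M$ maps in $G/N$ into the image of one coming from $G$. Only after this replacement can one pull $\underline d$ back to a genuine element $e$ of $\underline H^{\bullet}_{\mathfrak A}(G/M,\mathbb Z/p\Z)$ and invoke Quillen's theorem for the finite group $G/M$, which is what the paper does. Your anticipated obstacle --- that non-conjugate elementary abelian subgroups of $G$ might become conjugate in $G/N$, to be fixed by shrinking $N$ to ``separate'' them --- is a different and much milder issue; separating finitely many conjugacy classes does not produce the required statement that every object of $\mathfrak A(G/M)$ lands in $\mathfrak A(G,N)$, which needs Scheiderer's compactness argument. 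The same gap affects your sketch of condition $(i)$: knowing that $c$ restricts to zero on all elementary abelian subgroups of $G$ does not show that $d$ restricts to zero on elementary abelian subgroups of $G/N$ that do not arise from $G$, so there too one must either invoke Lemma \ref{schei} or simply cite \cite{Sc} as the paper does.
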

\begin{proof} 
This theorem was proved by Quillen when $G$ is finite (see \cite[Theorem 7.1 on page 567]{Qu} which is actually a more general result). 
At the end of \cite{Sc} (see 8.6 and 8.7 on pages 279--80) Scheiderer pointed out that there is an easy limit argument to derive the theorem above as a corollary. 
In fact, he proved that $q_G$ satisfies property $(i)$ in Definition \ref{f-def}. 
We complete his argument by showing property $(ii)$ for the convenience of the reader. 

\begin{notn} 
For every pair $M,N$ of  open normal subgroups of $G$ such that $M\subseteq N$, 
let $\pi_{M,N} \colon G/M\to G/N$ be the quotient map, and let $\mathfrak A(G,M,N)$ denote the image of the functor
$\mathfrak A(\pi_{M,N}) \colon \mathfrak A(G/M)\to\mathfrak A(G/N)$.
\end{notn}

\begin{lemma}\label{schei} 
For every open normal subgroup $N\triangleleft G$, there is open normal subgroup $M\triangleleft G$ such that $M\subseteq N$ and $\mathfrak A(G,M,N)=\mathfrak A(G,N)$.  
\end{lemma}
\begin{proof} 
This is the Claim on page 280 of \cite{Sc}.
\end{proof}

Now let $c\in H^n_{\mathfrak A}(G,\mathbb Z/p\Z)$ be arbitrary, and let $N\triangleleft G$ be an open normal subgroup such that there is a
$$\underline d=\{d_A\in \Hb(A,\mathbb Z/p\Z)\mid
A\in\mathfrak A(G,N)\}\in
\underline H^n_{\mathfrak A}(G,N,\mathbb Z/p\Z)$$
with the property $c=\mathfrak a_N(\underline d)$. 
By Lemma \ref{schei}, there is an open normal subgroup $M\triangleleft G$ such that $M\subseteq N$ and $\mathfrak A(G,M,N)=\mathfrak A(G,N)$.  
For every $A\in\mathfrak A(G/M)$,  let
$\pi_{M,N,A} \colon A\to\pi_{M,N}(A)$ be the map induced by the restriction of
$\pi_{M,N}$ onto $A$. 
The fact $\mathfrak A(G,M,N)=
\mathfrak A(G,N)$ means that the collection
$$e = \left\{\pi^{\bullet}_{M,N,A}(d_{\pi_{M,N}(A)})\in \Hb(A,\mathbb Z/p\Z)\mid
A\in\mathfrak A(G/M)\right\}$$
is a well-defined element of $H^n_{\mathfrak A}(G/M,\mathbb Z/p\Z)=
\underline H^n_{\mathfrak A}(G/M,\mathbb Z/p\Z)$. 
Applying Quillen's theorem to $e$ we get that there is a positive integer $m$ and an $f\in H^n(G/M,\mathbb Z/p\Z)$ such that $e^{p^m}=q_{G/M}(f)$. 
Let $\phi_M \colon \underline H^n_{\mathfrak A}(G/M,\mathbb Z/p\Z)\to\underline H^n_{\mathfrak A}(G,M,\mathbb Z/p\Z)$ be the map induced by the inclusion functor $\mathfrak A(G,M)\to\mathfrak A(G/M)$ as above. 
Then $\mathfrak a_M(\phi_M(e))=\mathfrak a_N(\underline d)$, and hence
\[
c^{p^m}=
\mathfrak a_M(\phi_M(e))^{p^m}=\mathfrak a_M(\phi_M(e^{p^m}))=
\mathfrak a_M(\phi_M(q_{G/M}(f)))=q_G(\pi_N^*(f)). 
\]
This finishes the proof of Theorem \ref{quillen-profinite}. 
\end{proof}

As a consequence we get the following 
\begin{cor}\label{local-global} 
Let $G$ be a cohomologically quasi-Boolean pro-$2$ group. 
Then for every $i>0$, there is a natural homomorphism 
$$\pi_i \colon H^i(G,\mathbb Z/2\mathbb Z) \to C(\mathcal X^*(G),\mathbb Z/2\mathbb Z)$$
which is an isomorphism for $i > 1$ and surjective for $i=1$. 
\end{cor}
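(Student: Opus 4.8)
The plan is to realise $\pi_i$ as the Quillen map $q_G$ of Theorem \ref{quillen-profinite}, composed with the identification furnished by Proposition \ref{onlyone}, and then to read off its behaviour degree by degree from the fact that $q_G$ is an $F$-isomorphism onto a graded Boolean algebra.

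First I would check that Proposition \ref{onlyone} applies to $G$ with $p=2$. Its two hypotheses are that $G$ has elementary rank at most one and that $G$ possesses an open normal subgroup without $2$-torsion. The first is exactly part $(ii)$ of Proposition \ref{rank}. For the second, write $\Hb(G,\Z/2\Z)=\Db\sqcap\Bb$ with $\Bb$ the graded Boolean algebra of a Boolean ring $B$. If $B=0$ then $H^i(G,\Z/2\Z)=0$ for $i\ge 2$, so $G$ is torsion-free, $\mathcal X^*(G)=\emptyset$, and the corollary holds trivially. If $B\neq 0$, the class $k\in H^1(G,\Z/2\Z)=D^1\oplus B$ corresponding to the unit $1\in B$ is a quasi-canonical homomorphism; since $k\neq 0$ its kernel $U\coloneqq\Ker k$ is open and normal of index two, and by part $(i)$ of Proposition \ref{rank} no involution lies in $U$, whence $U$ contains no $2$-torsion. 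Thus Proposition \ref{onlyone} yields $H^i_{\mathfrak A}(G,\Z/2\Z)=C(\mathcal X^*(G),\Z/2\Z)$ for every $i>0$, and I define $\pi_i$ to be $q_G$ in degree $i$ followed by this identification; naturality of $\pi_i$ is inherited from that of $q_G$.

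Next I would invoke the Quillen--Scheiderer Theorem \ref{quillen-profinite}, which says that $q_G\colon\Hb(G,\Z/2\Z)\to\Hb_{\mathfrak A}(G,\Z/2\Z)$ is an $F$-isomorphism. The target is a graded Boolean algebra: its degree-zero part is $\F_2$, each positive degree is the Boolean ring $C(\mathcal X^*(G),\Z/2\Z)$ by Proposition \ref{onlyone}, and under these identifications the cup product is pointwise multiplication of functions, coming from $t^i\cup t^j=t^{i+j}$ in each $\Hb(\Z/2\Z,\Z/2\Z)$. In particular $\Hb_{\mathfrak A}(G,\Z/2\Z)$ is reduced, so $q_G$ kills the nilradical $\mathcal N=D^1$ of $\Hb(G,\Z/2\Z)$ and descends to an $F$-isomorphism $\overline q\colon\Hb(G,\Z/2\Z)/\mathcal N\to\Hb_{\mathfrak A}(G,\Z/2\Z)$. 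By Lemma \ref{7.6} the source is the graded Boolean algebra $\Bb$, so $\overline q$ is an $F$-isomorphism between graded Boolean algebras, and hence an isomorphism by Lemma \ref{booleup}.

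Finally I would conclude degree by degree. For $i>1$ we have $D^i=0$, so the quotient map $H^i(G,\Z/2\Z)\to(\Hb(G,\Z/2\Z)/\mathcal N)^i=B$ is the identity and $\pi_i$ coincides with $\overline q$ in degree $i$, which is an isomorphism. For $i=1$ the map $\pi_1$ factors as the projection $H^1(G,\Z/2\Z)=D^1\oplus B\twoheadrightarrow B$ followed by the degree-one part of the isomorphism $\overline q$, hence is surjective (with kernel $D^1$). I expect the main obstacle to be the verification of the hypotheses of Proposition \ref{onlyone}, namely producing the torsion-free open subgroup purely from the cohomological hypothesis via quasi-canonical homomorphisms and Proposition \ref{rank}, together with the bookkeeping that identifies $\Hb_{\mathfrak A}(G,\Z/2\Z)$ with a genuine graded Boolean algebra so that Lemma \ref{booleup} can be applied.
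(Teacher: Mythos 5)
Your proposal is correct and follows essentially the same route as the paper: verify the hypotheses of Proposition \ref{onlyone} via Proposition \ref{rank}, identify $\pi_i$ with the Quillen--Scheiderer map $q_G$ of Theorem \ref{quillen-profinite}, pass to the quotient by the nilradical using Lemma \ref{7.6}, apply Lemma \ref{booleup}, and read off the degreewise statement from $\mathcal N = D^1$. Your only addition is that you spell out explicitly (including the edge case $B=0$) how the open normal subgroup without $2$-torsion is produced from a quasi-canonical homomorphism and Proposition \ref{rank}$(i)$, a point the paper leaves implicit in its citation of Proposition \ref{rank}.
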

\begin{proof} 
Let $\Cb(\mathcal X^*(G),\mathbb Z/2\mathbb Z)$ denote the graded Boolean algebra associated to the Boolean ring $C(\mathcal X^*(G),\mathbb Z/2\mathbb Z)$. 
By Proposition \ref{rank}, the conditions of Proposition \ref{onlyone} apply to $G$. 
Hence, by Theorem \ref{quillen-profinite}, there is an $F$-isomorphism:
$$\pi_* \colon \Hb(G,\mathbb Z/2\mathbb Z)\to
\Cb(\mathcal X^*(G),\mathbb Z/2\mathbb Z).$$
This map is zero on the nilradical of $\Hb(G,\mathbb Z/2\mathbb Z)$, so it induces an $F$-isomorphism:
\begin{equation}
\Hb(G,\mathbb Z/2\mathbb Z)/\mathcal N(\Hb(G,\mathbb Z/2\mathbb Z))\to
\Cb(\mathcal X^*(G),\mathbb Z/2\mathbb Z)\label{7.17.1}
\end{equation}
of graded Boolean algebras by Lemma \ref{7.6}, which must be an isomorphism by Lemma \ref{booleup}. 
Since the nilradical of $\Hb(G,\mathbb Z/2\mathbb Z)$ consists of degree one elements, the claim follows. 
\end{proof}


\section{Proof of the main theorem and some consequences}\label{sec:main_thm_and_consequences}

Now we are ready to give the proofs of our main results. 

\begin{proof}[Proof of Theorem \ref{bigbigbig}] 
The implication $(i)\Rightarrow(ii)$ is Theorem \ref{qb_rp}, while the implication $(ii)\Rightarrow(i)$ is Theorem \ref{rp_qb}. 
We already saw that $(ii)$ trivially implies $(iii)$ in the proof of Theorem \ref{foximaxi}. 
Now assume that $G$ is the maximal pro-$2$ quotient of a real projective profinite group $H$. 
As explained in \cite[Section 10]{PQ}, $\Hb(H,\mathbb Z/2\mathbb Z)$ is the connected sum of a dual algebra and a graded Boolean algebra. 
The pull-back map $\Hb(H,\mathbb Z/2\mathbb Z)\to \Hb(G,\mathbb Z/2\mathbb Z)$ is an isomorphism by the Rost--Voevodsky norm residue theorem \cite{Voe}, 
so we get that $G$ is cohomologically quasi-Boolean. 
Therefore the implication $(iii)\Rightarrow(iv)$ holds. 
To finish the proof of Theorem \ref{bigbigbig}, we show the remaining implication $(iv)\Rightarrow(i)$ in the following

\begin{thm}\label{thm:coh_quasi_is_rp} 
Every cohomologically quasi-Boolean pro-$2$ group $G$ is real projective. 
\end{thm}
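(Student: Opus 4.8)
The plan is to verify the two defining conditions of Definition \ref{def:real_projective_group} separately: that $\Inv(G)$ is closed in $G$, and that every real embedding problem for $G$ has a solution. By Remark \ref{rem:equiv_of_defns} the first is equivalent to exhibiting an open subgroup of $G$ without $2$-torsion, and by Proposition \ref{real2real} the second reduces, since $G$ is pro-$2$, to solving every real $2$-embedding problem.

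For the closedness of $\Inv(G)$ I would use quasi-canonical homomorphisms. If the Boolean ring $B$ is zero, then $\Hb(G,\F_2)$ is a dual algebra, so $H^n(G,\F_2)=0$ for $n\ge 2$; hence $G$ is a free pro-$2$ group, which is quasi-Boolean and therefore real projective by Theorem \ref{qb_rp}. Otherwise $B\neq 0$ and there is a quasi-canonical homomorphism $k\colon G\to\Z/2\Z$ (the one whose class maps to the unit of $B$), which is nonzero, so $U\coloneqq\Ker k$ is open of index two. By part $(i)$ of Proposition \ref{rank} no involution of $G$ lies in $U$; since a pro-$2$ group has only $2$-power torsion, any nontrivial torsion element of $U$ would have a power equal to an involution of $U$, which is impossible. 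Thus $U$ is torsion-free, giving the required open subgroup without $2$-torsion.

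It remains to solve an arbitrary real $2$-embedding problem $\alpha\colon B\to A$, $\phi\colon G\to A$ with $A,B$ finite $2$-groups. Since $B$ is nilpotent, I would factor $\alpha$ into a finite tower of central extensions whose kernels are elementary abelian, reducing to a central embedding problem $1\to V\to B\to\bar B\to 1$ with $V\cong\F_2^{\,r}$ and a solution $\psi\colon G\to\bar B$ already in hand from the previous layer. The obstruction to lifting $\psi$ is a class $\omega\in H^2(G,V)\cong H^2(G,\F_2)^{\oplus r}$, and by Corollary \ref{local-global} the map $\pi_2$ identifies $H^2(G,\F_2)$ with $C(\mathcal{X}^*(G),\F_2)$ via restriction to the cyclic subgroups generated by involutions. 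Hence $\omega=0$ as soon as, for every involution $x$, the element $\psi(x)$ lifts to an element of order at most two in $B$; when $\phi(x)\neq 1$ this is exactly the realness input (lift $\phi(x)$ to an involution of $B$), and when $\psi(x)=1$ it is automatic. Granting this, a solution of the central layer exists, and the set of solutions is a torsor under $\Hom(G,V)$; the surjectivity of $\pi_1$ in Corollary \ref{local-global} then lets me prescribe the values of the adjusting homomorphisms on conjugacy classes of involutions.

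The main obstacle is the bookkeeping that makes the realness hypothesis persist from one layer to the next. The difficulty is that an involution $x$ with $\phi(x)=1$ may nevertheless have $\psi(x)\neq 1$, and such an involution need not lift through a higher layer, so realness does not propagate through arbitrarily chosen intermediate solutions. I would resolve this by strengthening the induction: at each stage produce a solution realizing a fixed, conjugation-equivariant and continuous choice of involution data, sending each involution $x$ with $\phi(x)\neq 1$ to a prescribed involution of $B$ over $\phi(x)$ (compatible with the choices at all higher layers) and each involution with $\phi(x)=1$ to $1$. The vanishing of $\omega$ supplies some solution, and the $H^1$ half of Corollary \ref{local-global} adjusts it on involutions to match the prescribed data; the delicate points are the consistency (conjugation-invariance, using that $V$ is central) and the continuity of this data across the finitely many values of $\phi$, which is where the profiniteness of $\mathcal{X}^*(G)$ furnished by the local–global principle is essential.
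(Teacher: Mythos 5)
Your proposal is correct and follows essentially the same route as the paper: the open torsion-free subgroup comes from Proposition \ref{rank}$(i)$ applied to a quasi-canonical homomorphism, the reduction to real $2$-embedding problems is Proposition \ref{real2real}, and your ``strengthened induction'' carrying a fixed continuous, conjugation-invariant choice of involution lifts through a central filtration of $\Ker\alpha$ is precisely the paper's notion of an embedding problem with lifting data (Definition \ref{deflif} and Proposition \ref{lifting}), with the obstruction killed by the injectivity of $\pi_2$ and the data matched using the surjectivity of $\pi_1$ from Corollary \ref{local-global}. The only cosmetic differences are your separate treatment of the case $B=0$ (which the uniform argument already covers) and your use of elementary abelian central layers where the paper uses $\Z/2\Z$ layers.
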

\begin{proof} 
By part $(i)$ of Proposition \ref{rank}, $G$ has an open subgroup without $2$-torsion. 
By Remark \ref{rem:equiv_of_defns}, 
it will therefore be sufficient to show that every real embedding problem for $G$ has a solution. 
By Proposition \ref{real2real}, we need to show that any real $2$-embedding problem for $G$ has a solution. 
In fact we will prove something stronger.

\begin{defn}\label{deflif} 
For every group $G$, let $G^{\#}$ denote the set of its conjugacy classes.  
For  every $x \in G$, let $x^{\#}\in G^{\#}$ denote the conjugacy class of $x$, 
and, for every homomorphism $h \colon G\to H$ of groups, 
let $h^{\#} \colon G^{\#}\to H^{\#}$ denote the map on conjugacy classes induced by $h$. 
Now let $G$ be a profinite group. 
An {\it embedding problem with  lifting data $(\mathbf E,f)$} for $G$ is an embedding problem $\mathbf E$:
\[
\xymatrix{
 & G \ar[d]^{\phi}\ar@{.>}[ld]_{\widetilde{\phi}}
\\
B\ar[r]_{\alpha} & A}
\]
and a continuous map $f \colon \mathcal X^*(G)\to\mathcal X(B)$ such that
$\alpha^{\#}\circ f=\phi^{\#}|_{\mathcal X^*(G)}$. 
A {\it solution} to this embedding problem with lifting data is a solution $\widetilde{\phi}$ to the embedding problem $\mathbf E$ such that $\widetilde{\phi}^{\#}|_{\mathcal X^*(G)}=f$. 
\end{defn}

\begin{prop}\label{lifting} 
Let $G$ be a quasi-Boolean pro-$2$ group $G$. Then every $2$-embedding problem with lifting data for $G$ has a solution. 
\end{prop}
\begin{proof} In order to prove the claim in a first significant case, we need to recall some basic definitions and results. 

\begin{defn} 
The {\it kernel}, denoted $\mathrm{Ker}(\mathbf E)$, of an embedding problem $\mathbf E$ as one in Definition \ref{4.1} is the kernel of $\alpha$. 
We say that $\mathbf E$ is {\it central} if $\mathrm{Ker}(\mathbf E)$ lies in the centre of $B$. 
In this case the conjugation action of $G$ makes $\mathrm{Ker}(\mathbf E)$ into a constant abelian $G$-module. 
Assume now that the embedding problem $\mathbf E$ is central. 
Let $\widehat{\phi} \colon G\to B$ be a continuous map such that $\alpha\circ
\widehat{\phi}=\phi$. 
Then the map $c \colon G\times G\to\mathrm{Ker}(\mathbf E)$ given by the rule:
$$c(x,y)=\widehat{\phi}(xy)\widehat{\phi}(y)^{-1}\widehat{\phi}(x)^{-1}\in
\mathrm{Ker}(\mathbf E),\quad(x,y\in G)$$
is a cocycle, and its cohomology class $o(\mathbf E)\in H^2(G,\mathrm{Ker}(\mathbf E))$, called the {\it obstruction class of $\mathbf E$}, 
does not depend on the choice of $\widehat{\phi}$, only on $\mathbf E$. 
Moreover, $\mathbf E$ has a solution if and only if $o(\mathbf E)$ is zero. 
\end{defn}

\begin{rem}\label{onatural} 
The obstruction class has the following important naturality property: 
Let $\mathbf E$ be an embedding problem for $G$ as above, and suppose that $\mathbf E$ is central. 
Let $\chi \colon H\to G$ be a continuous homomorphism of profinite groups. Then
$$\xymatrix{
 & H \ar[d]^{\phi\circ\chi}\ar@{.>}[ld]_{\widetilde{\psi}}
\\
B\ar[r]_{\alpha} & A}$$
is a central embedding problem $\mathbf E(\chi)$ for $H$ with the same kernel as $\mathbf E$, and we have
\[
\chi^{\bullet}(o(\mathbf E))=o(\mathbf E(\chi)),
\]
where $\chi^{\bullet} \colon \Hb(G,\mathrm{Ker}(\mathbf E))\to \Hb(H,\mathrm{Ker}(\mathbf E))$ 
is the pull-back map on cohomology.
\end{rem}

\begin{lemma}\label{2kernel} Let $G$ be a quasi-Boolean pro-$2$ group $G$. Then every $2$-embedding problem with lifting data for $G$ and with a kernel isomorphic to $\mathbb Z/2\mathbb Z$ has a solution. 
\end{lemma}
\begin{proof} Let $(\mathbf E,f)$ be an embedding problem with lifting data for $G$ as in Definition \ref{deflif}, and assume that its kernel is isomorphic to $\mathbb Z/2\mathbb Z$. 
Since the automorphism group of the latter is trivial, we get that $\mathbf E$ is central. 
Because $\mathbf E$ is equipped with lifting data, it is real, that is, for every subgroup $H\subseteq G$ of order $2$ the embedding problem $\mathbf E(i_H)$ has a solution, where $i_H \colon H\to G$ is the inclusion map. 
Therefore, by Remark \ref{onatural}, the image of $o(\mathbf E)$ under the homomorphism:
$$\pi_2 \colon H^2(G,\mathbb Z/2\mathbb Z) \to C(\mathcal X^*(G),\mathbb Z/2\mathbb Z)$$
is zero. 
So by Corollary \ref{local-global}, the obstruction class $o(\mathbf E)$ vanishes, and hence $\mathbf E$ has a solution.

Let $s \colon G\to B$ be such a solution. 
Let $r \colon \mathcal X^*(G)\to\mathbb Z/2\mathbb Z = \mathrm{Ker}(\alpha)$ be the map given by the rule:
$$r(x)=\begin{cases}
    0 & \text{, if } s^{\#}(x)=f(x),\\
    1 & \text{, otherwise. } \end{cases}$$
Since $s^{\#}\times f \colon \mathcal X^*(G)\to B^{\#}\times B^{\#}$ is continuous with finite image, and $r(x)$ only depends on $s^{\#}(x)$ and $f(x)$ for every $x\in\mathcal X^*(G)$, we get that $r$ is continuous. 
Therefore, by Corollary \ref{local-global}, there is a continuous homomorphism
$\chi \colon G\to\mathbb Z/2\mathbb Z = \mathrm{Ker}(\alpha)$ whose image under the homomorphism  
\[
\pi_1 \colon H^1(G,\mathbb Z/2\mathbb Z) \to C(\mathcal X^*(G),\mathbb Z/2\mathbb Z)
\]
is $r$. 
Let $\widetilde{\phi} \colon G\to B$ be the function given by the rule
$\widetilde{\phi}(g)=s(g)\chi(g)$. 
Since it is the product of two continuous functions, $\widetilde{\phi}$ is continuous. 
Moreover, 
\[
\widetilde{\phi}(gh)=s(gh)\chi(gh)=s(g)s(h)\chi(g)\chi(h)=
s(g)\chi(g)s(h)\chi(h)=
\widetilde{\phi}(g)\widetilde{\phi}(h)
\]
using that $s$ and $\chi$ are homomorphisms and $\mathrm{Ker}(\alpha)$ is central. 
Therefore $\widetilde{\phi}$ is a homomorphism. 
Since $\alpha \circ\widetilde{\phi}=\alpha\circ s=\phi$, we get that $\widetilde{\phi}$ is a solution to $\mathbf E$. 
Now let $y\in\mathcal Y^*(G)$ be arbitrary. 
If  $s^{\#}(y^{\#})=f(y^{\#})$, then $\widetilde{\phi}(y)=s(y)$, and hence
$\widetilde{\phi}(y)^{\#}=s(y)^{\#}=s^{\#}(y^{\#})=f(y^{\#})$. 
If $s^{\#}(y^{\#})\neq f(y^{\#})$, then $\widetilde{\phi}(y)\neq s(y)$, so
$\widetilde{\phi}(y)$ is the unique element of $\alpha^{-1}(\phi(y))$ distinct from $s(y)$. 
Since $\alpha^{-1}(\phi(y))$ contains an element of $f(y^{\#})$, as $\alpha$ is surjective, and this element is not $s(y)$, it must be $\widetilde{\phi}(y)$. So $\widetilde{\phi}(y)^{\#}=f(y^{\#})$ in this case, too.
\end{proof}

Now we are going to show Proposition \ref{lifting} in the general case. Let $(\mathbf E,f)$ be an embedding problem with lifting data for $G$ as in Definition \ref{deflif}. 
Since $B$ is a $2$-group, it has a filtration by normal subgroups:
$$\{1\}=N_0\subset N_1\subset\cdots\subset N_n=\mathrm{Ker}(\alpha)$$
such that the kernel of the quotient map $\pi_k \colon B/N_k\to B/N_{k+1}$ is isomorphic to $\mathbb Z/2\mathbb Z$ for every $k=0,1,\ldots,n-1$. 
Let $q_k \colon B\to B/N_k$ be the quotient map. 
Note that it will be sufficient to show that for every continuous homomorphism $h \colon G\to B/N_{k+1}$ such that
$h^{\#}|_{\mathcal X^*(G)}=q_{k+1}^{\#}\circ f=\pi_k^{\#}\circ q_k^{\#}\circ f$ the embedding problem $\mathbf E_k$:
\[
\xymatrix{
 & G \ar[d]^-{h}\ar@{.>}[ld]_-{\widetilde h}
\\
B/N_k\ar[r]_-{\!\!\pi_k} &
B/N_{k+1}}
\]
with lifting data $q_k^{\#}\circ f$ has a solution for every $k=0,1,\ldots,n$. 
Indeed let $r_k \colon B/N_k\to A=B/N_n$ be the quotient map.  
Then we would get by descending induction on the index $k$ that  the embedding problem:
\[
\xymatrix{
 & G \ar[d]^-{\phi}\ar@{.>}[ld]_-{\widetilde{\phi}}
\\
B/N_k\ar[r]_-{\ \ r_k} & A}
\]
with lifting data $q_k^{\#}\circ f$ has a solution. The claim is now clear from the case $k=0$. 
However, $\mathbf E_k$ has a kernel isomorphic to $\mathbb Z/2\mathbb Z$, so $(\mathbf E_k,q_k^{\#}\circ f)$ has a solution by Lemma \ref{2kernel}. 
\end{proof}
In order to conclude the proof of Theorem \ref{thm:coh_quasi_is_rp} it will be sufficient to show that every real $2$-embedding problem $\mathbf E$ for $G$ as above can be equipped with lifting data. 
By assumption, for every $x\in\mathcal X(A)$ in the image $\mathrm{Im}(\phi^{\#}|_{\mathcal X^*(G)})$, there is a $y\in\mathcal X(B)$ such that
$\alpha^{\#}(y)=x$, 
i.e., there is a section $g \colon \mathrm{Im}(\phi^{\#}|_{\mathcal X^*(G)})\to\mathcal X(B)$ of the restriction $\alpha^{\#}|_{\mathcal X(B)}$. 
Since $g$ is a map between discrete spaces, it is continuous, therefore the composition $f=g\circ\phi^{\#}|_{\mathcal X^*(G)}$ is also continuous, and hence
$(\mathbf E,f)$ is a $2$-embedding problem with lifting data. 
\end{proof}
\renewcommand{\qedsymbol}{}
\end{proof}


\begin{proof}[Proof of Corollary \ref{justboole}] 
First assume that $G$ is Boolean, i.e., $G$ is isomorphic to $\mathbb B(X)$ for some profinite space $X$. 
Then it is cohomologically quasi-Boolean by Theorem \ref{bigbigbig}, so $\Hb(G,\mathbb Z/2\mathbb Z)$ is the connected sum of a dual algebra $\Db$ and a graded Boolean algebra $\Bb$. 
Assume that $\Db$ is non-trivial, so there is a non-zero $c\in D^1\subset\textrm{Hom}(G,\mathbb Z/2\mathbb Z)$. 
Then $c\cup c=0$ as $D^2=0$, so the restriction of $c$ onto every involution in the image of $i_X \colon X\to\mathcal Y(\mathbb B(X))$ is zero. 
Therefore, by the universal property of $\mathbb B(X)$ of Remark \ref{uki-ho}, 
the homomorphism corresponding to $c$ is also zero, a contradiction. 
Hence $G$ is cohomologically Boolean.

Next assume that $G$ is cohomologically Boolean. 
Then it is quasi-Boolean by Theorem \ref{bigbigbig}, 
so $G$ is the free product of a free pro-$2$ group $F$ and a Boolean group $\mathbb B(X)$ for a profinite space $X$. 
Assume that $F$ is non-trivial. 
Then there is a non-zero $c\in\textrm{Hom}(F,\mathbb Z/2\mathbb Z)$. 
Let $\pi_1 \colon G\to F$ be the surjective homomorphism supplied by the universal property of free pro-$2$ products, and let $\overline c$ be the composition of $\pi_1$ and $c$.
Then $c\cup c=0$, and hence $\overline c\cup\overline c=0$. 
Since $\Hb(G,\mathbb Z/2\mathbb Z)$ is a graded Boolean algebra, this implies that $\overline c$ is zero, a contradiction. 
Therefore $G$ is Boolean.
\end{proof}

\begin{rem} 
Let $G$ be a Boolean group and let $f \colon G\to\mathbb Z/2\mathbb Z$ be a quasi-canonical homomorphism. 
By Remark \ref{6.8}, the homomorphism $f$ is characterised by the property 
that, for every $n>0$ and $c\in H^n(G,\mathbb Z/2\mathbb Z)$, we have $c^2=c\cup k^n$. 
But $\Hb(G,\mathbb Z/2\mathbb Z)$ is a graded Boolean algebra by Corollary \ref{justboole}, and hence $f$ is unique. 
Therefore, it is justified to call it the {\it canonical homomorphism} of the Boolean group $G$. 
\end{rem}

\begin{proof}[Proof of Theorem \ref{reconstruct}] 
By assumption, $G$ is isomorphic to $F(Y')*_2\mathbb B(X')$ for a set $Y'$ and a profinite space $X'$. 
We are actually going to show a stronger claim than in the theorem, 
namely, that the profinite spaces $X$ and $X'$ are homeomorphic, and the sets $Y'$ and $Y$ are bijective. 
By Theorem \ref{sistar}, the profinite spaces $X'$ and $\mathcal X^*(G)$ are homeomorphic, 
and, as we saw in the proof of Corollary \ref{local-global}, the graded Boolean algebras $\Bb$ and $\Cb(\mathcal X^*(G),\mathbb Z/2\mathbb Z)$ are isomorphic (see \eqref{7.17.1}). 
Therefore, by Stone duality of Theorem \ref{duality}, 
the profinite spaces $X$ and $X'$ are homeomorphic, too. 
Using Notation \ref{notn:lower_star}, we have $G_*=F(Y')_*\oplus\mathbb B(X')_*$. 
Hence we have $H^1(G,\mathbb Z/2\mathbb Z)=H^1(F(Y'),\mathbb Z/2\mathbb Z) \oplus H^1(\mathbb B(X'),\mathbb Z/2\mathbb Z)$. 
However, since $H^1(\mathbb B(X'),\mathbb Z/2\mathbb Z)\cong B^1$ by the above, we get that $H^1(F(Y'),\mathbb Z/2\mathbb Z)\cong D^1$, 
and hence $H^1(G,\mathbb Z/2\mathbb Z)=D^1 \oplus B^1$.   
Therefore, $\mathbb Z/2\mathbb Z^{\oplus Y'}\cong\mathbb Z/2\mathbb Z^{\oplus Y}$, 
and hence there is a bijection between the sets $Y'$ and $Y$. 
\end{proof}


\end{document}